\newtheorem{theorem}{Theorem}[section]
\newtheorem{lemma}[theorem]{Lemma}
\newtheorem{prop}[theorem]{Proposition}
\newcommand{\D}{{\rm d}}
\newcommand{\R}{{\mathbb R}}
\newcommand{\N}{{\mathbb N}}
\newcommand{\E}{\mathbb{E}}
\newcommand{\PP}{\mathbb{P}}
\newcommand{\HH}{{\mathcal H}}
\newcommand{\ii}{\mathbf{1}}
\DeclareMathOperator{\Var}{Var}
\DeclareMathOperator{\inti}{int}
\numberwithin{equation}{section}
\title[Random polytopes circumscribed around a convex body]{The volume of random polytopes circumscribed around a convex body}
\author{Ferenc Fodor}
\address{Department of Geometry, Bolyai Institute, University of Szeged, Aradi v\'ertan\'uk tere 1, H-6720 Szeged, Hungary, and Department of Mathematics and Statistics, University of Calgary, Canada}
\email{fodorf@math.u-szeged.hu}
\author{Daniel Hug}
\address{Karlsruhe Institute of Technology, Department of Mathematics,
D-76128 Karls\-ruhe, Germany}
\email{daniel.hug@kit.edu}
\author{Ines Ziebarth}
\address{Karlsruhe Institute of Technology, Department of Mathematics,
D-76128 Karls\-ruhe, Germany}
\email{ines.ziebarth@kit.edu}
\subjclass[2010]{Primary 52A22, Secondary 60D05, 52A27}
\begin{document}

\begin{abstract}
Let $K$ be a convex body in $\R^d$ which slides freely in a ball. 
Let $K^{(n)}$ denote the intersection
of $n$ closed half-spaces containing $K$ whose bounding hyperplanes
are independent and identically distributed according to a certain
prescribed probability distribution. We prove an asymptotic formula for the 
expectation of the difference of the volumes of $K^{(n)}$ and $K$, and an asymptotic
upper bound on the variance of the volume of $K^{(n)}$. 
We achieve these results by first proving similar statements
for weighted mean width approximations of convex bodies that admit a rolling ball by inscribed random polytopes 
and then by polarizing these results similarly as in \cite{BoFoHu2010}.
\end{abstract}

\maketitle

\section{Introduction and main results}
In this paper, we will investigate approximations of convex bodies by random polyhedral sets $K^{(n)}$ that arise as intersections of $n$ independent and identically distributed random closed half-spaces chosen according to a prescribed probability distribution and containing a given convex body $K$. 
In the rich theory of random polytopes, the overwhelming majority of results concern approximations of convex bodies by inscribed random polytopes. 
For a survey of this extensive theory, see for example the papers by B\'ar\'any \cite{Ba2008}, and Weil and Wieacker \cite{WeWie1993}. 
There is much less known about properties of random polytopes that contain a convex body. 

The probability model we consider has been investigated recently, for example, in B\"or\"oczky and Schneider \cite{BoSch2010} and in B\"or\"oczky, Fodor and Hug \cite{BoFoHu2010}. 
For a short overview of the history and known results on this and other similar circumscribed models, see, for example, \cite{BoFoHu2010} and the references therein. 
In particular, in \cite{BoFoHu2010} an asymptotic formula was proved for the expectation of the mean width difference of $K^{(n)}\cap K_1$ and $K$ without any smoothness assumption on the boundary of the convex body $K$, where $K_1$ denotes the radius $1$ parallel body of $K$.
Since the random polyhedral set $K^{(n)}$ is unbounded with positive probability, it is necessary to take an intersection such as, for example, $K^{(n)}\cap K_1$ to obtain a finite value for the expectation of geometric functionals like the intrinsic volumes. 
In this probability model, the role of the radius $1$ parallel body $K_1$ is not essential in the sense that if we choose another convex body in its place that contains $K$ in its interior, then this only affects the normalization constants in the theorems. 

In the following, we will prove a similar asymptotic formula for the expectation of the volume difference $\E (V(K^{(n)}\cap K_1)-V(K))$ under a mild smoothness assumption. 

In the theory of random polytopes, there is comparatively less known about the variance of random variables associated with geometric properties of random polytopes than about their means. Recently, there has been significant progress in this direction in the case of inscribed random polytopes, and also for Gaussian random polytopes. For more information and references, see B\'ar\'any \cite{Ba2008}, Calka and Yukich \cite{CaYu2014}, Calka, Schreiber and Yukich \cite{CaSchYu2013}, and Hug \cite{Hu2013}. 
However, these recently developed powerful techniques have not yet been used to establish bounds on the variance of geometric functionals associated with random polyhedral sets containing a convex body. 
In this article, using some of the methods described in B\"or\"oczky, Fodor, Reitzner and V\'\i gh \cite{BoFoReVi2009} and in B\"or\"oczky, Fodor and Hug \cite{BoFoHu2010}, we will prove an asymptotic upper bound for the variance of the volume $V(K^{(n)}\cap K_1)$. 
This asymptotic upper bound then yields a strong law of large numbers for $V(K^{(n)}\cap K_1)$.   

In order to establish these results, we first derive dual results for the mean width difference of $K$ and a random polytope $K_{(n)}$ inscribed in $K$, that is, the convex hull of $n$ independent random points from $K$ chosen according to a probability distribution.
Then we apply polarity arguments.

For a precise formulation of our results, we need the following definitions (cf. p. 156 and p. 164 in \cite{Sch2014}).
We say that the convex body $K$ slides freely in a ball $B$ if for each boundary point $p$ of $B$, there is a translate $K+v$ of $K$ with the property that $p\in K+v$ and $K+v\subset B$.
Moreover, a ball $B$ rolls freely inside $K$ if for each boundary point $p$ of $K$, there is a translate $B+v$ of $B$ such that $p \in B+v$ and $B+v \subset K$.
Note that a ball rolls freely inside a convex body $K$ if and only if it slides freely inside $K$.

Since we do only require weak differentiability assumptions on the boundary of $K$ in this article, we use generalized notions of differentiability and curvature; see Sections 1.5, 2.5 and 2.6 in Schneider \cite{Sch2014}. 
In particular, $\kappa(x)$ denotes the generalized Gaussian curvature of the boundary $\partial K$ of $K$ at $x$; precise definitions follow in the next section.

Finally, we define the constant
\[ c_d := \frac{(d\kappa_d)^{\frac{2}{d+1}}\Gamma(\frac{2}{d+1})}{(d+1)^{\frac{d-1}{d+1}}\kappa_{d-1}^{\frac{2}{d+1}}}. \]

Our main results are stated in the following theorems. 
Here, we only formulate special cases, whereas we prove more general results (see Theorems~\ref{Satz2_q}, \ref{thm:var-up_q}, \ref{thm:law-of-large-num_q}, \ref{thm:vol-mu}, and \ref{thm_var-up-vol}) involving, for example, weight functions.

The first theorem establishes an asymptotic formula for the volume difference of $K^{(n)}$ and $K$. 

\begin{theorem}\label{main}
Let $K$ be a convex body in $\R^d$ which slides freely in a ball. Then
$$ \lim_{n\to\infty} n^{\frac{2}{d+1}}\,\E (V(K^{(n)}\cap K_1)-V(K))
=c_d \int_{\partial K}\kappa(x)^{-\frac{1}{d+1}}\,\HH^{d-1}(\D x).  $$
\end{theorem}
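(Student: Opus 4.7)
The plan is to follow the two-step strategy announced in the introduction: first prove a weighted asymptotic formula for the expected mean width deficit $\E(W(L) - W(L_{(n)}))$ of an inscribed random polytope $L_{(n)}$ in a convex body $L$ admitting a rolling ball (this is Theorem~\ref{Satz2_q}), and then transfer the statement to the circumscribed volume problem by polarity, in the spirit of \cite{BoFoHu2010}.

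For the inscribed mean width step, I would start from the spherical representation
\[
W(L) - W(L_{(n)}) = \frac{2}{d\kappa_d} \int_{S^{d-1}} \bigl(h(L,u) - h(L_{(n)},u)\bigr)\,\HH^{d-1}(\D u),
\]
and use Fubini to rewrite $\E(h(L,u) - h(L_{(n)},u))$ as an integral over $t \ge 0$ of the probability $\PP(h(L_{(n)},u) \le h(L,u) - t) = (1 - \mu(C(u,t)))^n$, where $C(u,t)$ is the cap of height $t$ at the boundary point of $L$ with outer normal $u$ and $\mu$ is the weighted sampling measure. The rolling ball condition gives the small-cap asymptotic $\mu(C(u,t)) \sim c\,\kappa(x(u))^{-1/2}\, t^{(d+1)/2}$ together with a uniform upper bound enabling dominated convergence. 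Rescaling $t = s\,n^{-2/(d+1)}$ and converting the spherical integral into an integral over $\partial L$ via the inverse Gauss map then produces an asymptotic of order $n^{-2/(d+1)}$ with an explicit integrand depending on the local curvature of $\partial L$ and the sampling weight.

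The second step is the polarity transfer. Polarizing with respect to a fixed interior point $o \in K$ sends each hyperplane $H$ with $K \subset H^+$ and $o \notin H$ to a point $H^*$, and transforms $\bigcap_i H_i^+$ into the convex hull of $K^*$ together with the polar points $H_i^*$. Under this correspondence the prescribed hyperplane distribution becomes a distribution supported in a bounded annular region between $K^*$ and $(K_1)^*$, and the outer body $(K_1)^*$ admits a rolling ball by the Blaschke rolling theorem, so Step 1 applies on the polar side. The volume difference $V(K^{(n)} \cap K_1) - V(K)$ is then expressed via the standard polarity identities, a local change of variables, and the weighted mean width deficit of the associated inscribed polytope. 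Pulling the resulting boundary integral back to $\partial K$ using that polarity transforms the Gauss--Kronecker curvature in a controlled way yields the stated constant $c_d$ and the integrand $\kappa(x)^{-1/(d+1)}$ on $\partial K$.

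The main obstacle will be executing the polarity transfer cleanly: one must identify the weight function on the inscribed side corresponding exactly to the prescribed hyperplane measure on the circumscribed side, track Jacobian factors carefully enough to recover both $c_d$ and the curvature integrand, and absorb the effect of the auxiliary intersection with $K_1$ (which is needed because $K^{(n)}$ is almost surely unbounded). Analogous issues arose in \cite{BoFoHu2010} for the mean width functional; for volume the local expansion of the functional near $\partial K$ differs, so the bookkeeping of the polarity transformation must be redone.
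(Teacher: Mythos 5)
Your overall two-step strategy (inscribed weighted mean width asymptotics, then polarity) is exactly the paper's, and your Step 1 outline matches Theorem~\ref{Satz2_q} in substance. The gap is in the polarity transfer. First, a slip: the polar of $K^{(n)}=\bigcap_i H_i^-$ is the convex hull $[o,H_1^*,\ldots,H_n^*]$ of the polar points (and the origin), which is a random polytope \emph{inscribed in} $K^*$ with vertices in the annulus $K^*\setminus (K_1)^*$; it is not ``the convex hull of $K^*$ together with the polar points'' --- since all $H_i^*\in K^*$, that hull would just be $K^*$ and the construction would be vacuous. Consequently the body to which Step 1 must be applied is $K^*$ (the ambient body of the inscribed polytope, whose boundary carries the sampling density after polarization), not $(K_1)^*$; the latter is the inner exclusion region where the transferred density vanishes, and its boundary regularity is irrelevant.

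This matters because the one place where the hypothesis ``$K$ slides freely in a ball'' enters is precisely in guaranteeing that a ball rolls freely inside $K^*$, which is what Theorem~\ref{Satz2_q} requires. Your appeal to ``the Blaschke rolling theorem'' for $(K_1)^*$ does not supply this: Blaschke's theorem needs $C^2$-type curvature bounds that are not available here, it is invoked for the wrong body, and in your outline the sliding-freely hypothesis is never actually used. The paper closes this gap with a nontrivial argument (Proposition~\ref{propmod}, via Lemmas~\ref{LemmaA} and \ref{LemmaB}): the polar of a ball not centred at $o$ is an ellipsoid of revolution whose principal radii of curvature are bounded below by $R^{-1}$, and combining this with the fact that $K$ is a summand of a ball of radius $R$ one gets that $B^d(o,R^{-1})$ rolls freely in $K^*$. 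Without this step your argument cannot invoke Step 1 on the polar side, so as written the proof is incomplete. (The remaining bookkeeping you defer --- identifying the transferred density $\varrho(x)\propto q(1/\|x\|,x/\|x\|)\,\|x\|^{-(d+1)}$ on $K^*\setminus(K_1)^*$, rewriting the volume difference as a weighted mean width difference with weight $s^{-(d+1)}\lambda(s^{-1},u)$ via $s=1/t$, and the curvature/Jacobian identities relating $\kappa^*$ on $\partial K^*$ to $\kappa$ on $\partial K$ --- is carried out in the paper exactly along the lines you indicate, so that part of your plan is sound, if unexecuted.)
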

If $K$ slides freely inside a ball of radius $R$, then $\kappa(x) \geq R^{-(d-1)}$ for $\mathcal{H}^{d-1}$ almost all $x \in \partial K$ (see the proof of Theorem~\ref{thm:vol-mu} for details).
Hence, the right-hand side of the above equation is well-defined.

The second theorem establishes an asymptotic upper bound on the variance of the volume $V(K^{(n)}\cap K_1)$.

\begin{theorem}\label{thm:var-up-circ}
Let $K$ be a convex body in $\R^d$ which slides freely in a ball. 
Then 
\begin{equation*}
\Var (V(K^{(n)}\cap K_1)) \ll n^{-\frac{d+3}{d+1}},
\end{equation*}
where the implied constant depends only on $K$. 
\end{theorem}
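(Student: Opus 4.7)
The plan, following the dual-then-polarize strategy signalled in the introduction and developed in \cite{BoFoHu2010}, is to first establish an analogous variance upper bound for the \emph{mean width} of a random polytope inscribed in a convex body $L$ admitting a rolling ball, and then to transfer this bound to $\Var(V(K^{(n)}\cap K_1))$ by polarity. Concretely, if $X_1,\dots,X_n$ are i.i.d.\ random points in $L$ (with a prescribed density, as in Theorem~\ref{thm:var-up_q}) and $L_{(n)} = \mathrm{conv}(X_1,\dots,X_n)$, the target inscribed bound is $\Var(W(L_{(n)})) \ll n^{-(d+3)/(d+1)}$. The polarity dictionary of \cite{BoFoHu2010} (using that $K$ slides freely in a ball iff a ball rolls freely inside the polar of $K$, after a suitable translation) then yields the stated volume variance bound for $K^{(n)}\cap K_1$, the truncation by $K_1$ contributing only controlled remainder terms.

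For the inscribed bound, I would invoke the Efron--Stein jackknife inequality, which by symmetry gives
\begin{equation*}
\Var(W(L_{(n)})) \;\leq\; n\,\E\bigl[\bigl(W(L_{(n)}) - W(L_{(n-1)})\bigr)^2\bigr],
\end{equation*}
with $L_{(n-1)} := \mathrm{conv}(X_1,\dots,X_{n-1})$. The non-negative difference $W(L_{(n)}) - W(L_{(n-1)})$ vanishes unless $X_n$ is a vertex of $L_{(n)}$; in the latter case it equals the integral of $h(L_{(n)},\cdot)-h(L_{(n-1)},\cdot)$ over the unit sphere, which is supported on the exterior normal cone of $X_n$ and is jointly controlled by the distance from $X_n$ to $\partial L$ and by the size of that normal cone. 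Conditioning on $X_n=x$, the event ``$x$ is a vertex'' has probability $(1-\mu(D(x)))^{n-1}$, where $D(x)$ is the cap cut off beyond $x$, and the rolling-ball hypothesis on $L$ controls both $D(x)$ and the normal cone in terms of the depth $t(x):=\mathrm{dist}(x,\partial L)$. Following the cap-integration machinery of \cite{BoFoReVi2009}, the second-moment computation reduces to estimates of the form $n\int_L t(x)^{a}(1-\mu(D(x)))^{n-1}\,\mu(\mathrm{d}x)$ for an explicit exponent $a$, which yield the claimed $n^{-(2d+4)/(d+1)}$ bound on $\E[(W(L_{(n)})-W(L_{(n-1)}))^2]$, hence $n^{-(d+3)/(d+1)}$ after multiplication by $n$.

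The most delicate step is this sharp second-moment estimate: whereas the expectation of the jackknife increment is driven by typical cap sizes, the variance is sensitive to rare events in which $X_n$ is either unusually deep or has an unusually large normal cone, and the mean-width functional being linear in the support function means the corresponding contribution is global on $S^{d-1}$ rather than localized as it would be for volume. The rolling-ball assumption on $L$ (equivalently, the hypothesis that $K$ slides freely in a ball) is precisely what couples cap depth and normal-cone size and prevents such unbalanced configurations; carrying out the weighted integration with the right exponents is where the main technical effort will lie. Once the inscribed variance bound is in hand, the polarity translation of \cite{BoFoHu2010}, together with a routine argument showing that the intersection with $K_1$ perturbs only lower-order terms, completes the proof.
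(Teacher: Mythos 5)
Your overall route---Efron--Stein plus the cap-integration estimates of \cite{BoFoReVi2009} for inscribed random polytopes in a body with a rolling ball, followed by polarization as in \cite{BoFoHu2010}---is exactly the paper's (Theorems~\ref{thm:var-up_q} and \ref{thm_var-up-vol}, with Proposition~\ref{propmod} supplying the rolling ball for $K^*$). However, the transfer step as you state it has a genuine gap. Under polarity, $V(K^{(n)}\cap K_1)-V(K)$ does not correspond to the mean width deficit of the inscribed polytope $K^*_{(n)}$: the substitution $s=1/t$ in the radial integral turns it into a \emph{weighted} mean width deficit $W_{\widetilde{\lambda}}(K^*)-W_{\widetilde{\lambda}}(K^*_{(n)})$ with $\widetilde{\lambda}(s,u)=\ii\{s\ge h_{(K_1)^*}(u)\}\,s^{-(d+1)}$, the points being sampled with the non-uniform density $\varrho(x)\asymp \|x\|^{-(d+1)}$ on $K^*\setminus (K_1)^*$. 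A variance bound for the plain (even density-weighted) mean width does not imply one for this weighted functional: pointwise one only has $0\le Z_n\le C\,Y_n$, where $Y_n=W(K^*)-W(K^*_{(n)})$ and $Z_n$ is the weighted, truncated deficit, and this gives only $\Var(Z_n)\le \E Z_n^2\le C^2\,\E Y_n^2\asymp n^{-\frac{4}{d+1}}$, which is strictly weaker than $n^{-\frac{d+3}{d+1}}$ for every $d\ge 2$. So the bound must be transferred at the level of the Efron--Stein \emph{increments}---apply Efron--Stein to $V(K^{(n)}\cap K_1)$ itself and polarize the increment, as in the proof of Theorem~\ref{thm_var-up-vol}---or you must prove the inscribed bound for weighted mean widths $W_q$ from the start, which is precisely Theorem~\ref{thm:var-up_q}. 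Your cap argument does extend to that setting with essentially no change, since $\widetilde{\lambda}$ is bounded near $D_{K^*}$, but this is the step you actually have to carry out; as written, your final sentence asserts a variance comparison that is false.

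Two smaller inaccuracies in the inscribed sketch: the event that $X_n$ is a vertex of $L_{(n)}$ is not the event that a single fixed cap $D(x)$ misses the other points, so its probability is not $(1-\mu(D(x)))^{n-1}$; the rigorous second-moment computation (as in \cite{BoFoReVi2009} and in the proof of Theorem~\ref{thm:var-up_q}) integrates over pairs of caps $C(u,s)$, $C(v,t)$, uses the estimate $\HH^{d-1}(\Sigma(u,h_K(u)-a;b))\ll a^{\frac{d-1}{2}}$ for the set of directions of interacting caps together with the inequality $\alpha(1-\alpha)^n\le \frac{3}{n}\left(1-\frac{2\alpha}{3}\right)^n$, and only then invokes Lemma~\ref{gamma} with $\beta=\frac{d+1}{2}$; your single-integral reduction over the added point gives the correct exponent heuristically but is not literally the estimate. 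Also, the statement that a ball rolls freely in $K^*$ when $K$ slides freely in a ball is not simply quoted from \cite{BoFoHu2010}; in the paper it requires the appendix (Lemmas~\ref{LemmaA}, \ref{LemmaB} and Proposition~\ref{propmod}). Finally, the truncation by $K_1$ is handled via $\PP_{\mu_q}(K^{(n)}\not\subset K_1)\ll\alpha^n$ from \cite{BoSch2010}, as you anticipate; that part is indeed routine. None of this changes the architecture, but the weighted-functional issue above is a real gap you need to close.
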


Finally, the following law of large numbers follows from Theorem~\ref{thm:var-up-circ} by standard arguments, using the monotonicity of $V(K^{(n)}\cap K_1)-V(K)$ in $n$. 

\begin{theorem}\label{thm:lln-circ}
Let $K$ be a convex body in $\R^d$ which slides freely in a ball. Then
\[ \lim_{n\to\infty} (V(K^{(n)}\cap K_1)-V(K))\cdot n^{\frac{2}{d+1}}
=c_d \int_{\partial K}\kappa(x)^{-\frac{1}{d+1}}\,\HH^{d-1}(\D x)  \]
with probability $1$.
\end{theorem}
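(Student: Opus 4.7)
The plan is to combine the variance upper bound from Theorem~\ref{thm:var-up-circ} with the expectation formula from Theorem~\ref{main} and the almost sure monotonicity of $X_n := V(K^{(n)}\cap K_1)-V(K)$ to pass from an $L^2$-estimate to almost sure convergence. The route is classical: establish convergence along a polynomial subsequence via Chebyshev plus Borel--Cantelli, then interpolate to arbitrary $n$ using pathwise monotonicity.

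Couple the polytopes $K^{(n)}$ via $K^{(n+1)}=K^{(n)}\cap H_{n+1}$, where $H_1,H_2,\ldots$ is the underlying i.i.d.\ sequence of closed half-spaces containing $K$. Under this coupling $K^{(n+1)}\cap K_1\subseteq K^{(n)}\cap K_1$, so $n\mapsto X_n$ is pathwise nonincreasing. Set $Y_n:=n^{2/(d+1)}X_n$ and
\[ A:=c_d\int_{\partial K}\kappa(x)^{-1/(d+1)}\,\HH^{d-1}(\D x). \]
By Theorem~\ref{main} we have $\E Y_n\to A$, and Theorem~\ref{thm:var-up-circ} gives
\[ \Var Y_n = n^{4/(d+1)}\,\Var X_n \ll n^{-(d-1)/(d+1)}. \]

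Fix $\beta>(d+1)/(d-1)$ and let $n_k:=\lceil k^\beta\rceil$. For any $\epsilon>0$, Chebyshev's inequality yields $\PP(|Y_{n_k}-\E Y_{n_k}|>\epsilon)\ll k^{-\beta(d-1)/(d+1)}$, which is summable by the choice of $\beta$. Since $\E Y_{n_k}\to A$, the Borel--Cantelli lemma implies $Y_{n_k}\to A$ almost surely. For $n_k\leq n\leq n_{k+1}$, monotonicity of $X_n$ gives $X_{n_{k+1}}\leq X_n\leq X_{n_k}$, hence
\[ \left(\frac{n}{n_{k+1}}\right)^{\!2/(d+1)}\! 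Y_{n_{k+1}} \;\leq\; Y_n \;\leq\; \left(\frac{n}{n_k}\right)^{\!2/(d+1)}\! Y_{n_k}. \]
Since $n_{k+1}/n_k=((k+1)/k)^{\beta}\to 1$, both prefactors tend to $1$, and the sandwich forces $Y_n\to A$ almost surely.

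The main delicate point is the choice of subsequence: $n_k$ must grow quickly enough that the variance sum converges (so Borel--Cantelli applies), yet slowly enough that $n_{k+1}/n_k\to 1$ and the monotone sandwich is asymptotically tight. The polynomial choice $n_k=\lceil k^\beta\rceil$ achieves both provided $\beta>(d+1)/(d-1)$; this is exactly the range for which the power-law variance bound of Theorem~\ref{thm:var-up-circ} is strong enough (note that the variance exponent $-(d-1)/(d+1)$ is negative precisely when $d\geq 2$, which is the relevant range).
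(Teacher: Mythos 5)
Your proposal is correct and follows essentially the same route as the paper: the authors prove Theorem~\ref{thm:lln-circ} by the ``standard argument'' they spell out for Theorem~\ref{thm:law-of-large-num_q}, namely Chebyshev's inequality with the variance bound giving deviation probabilities of order $n^{-\frac{d-1}{d+1}}$, Borel--Cantelli along a polynomially growing subsequence, and interpolation via the pathwise monotonicity of $V(K^{(n)}\cap K_1)-V(K)$, exactly as in your write-up (the paper defers these last steps to \cite{BoFoReVi2009}, p.~2295). Your version merely makes the subsequence choice $n_k=\lceil k^\beta\rceil$, $\beta>\frac{d+1}{d-1}$, and the sandwich step explicit, which is a faithful expansion of the cited argument.
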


Theorem 3.1 in \cite{BoFoHu2010} provides an asymptotic formula for the expectation of the weighted volume difference of $K$ and $K_{(n)}$ without any smoothness assumptions on $K$.
In analogy to this, we state the following asymptotic formula for the weighted mean width difference of $K$ and $K_{(n)}$ under a mild smoothness assumption.
In the case of uniformly distributed points in $K$, this result was already proved in \cite{BoFoReVi2009}.

The width of a convex body $K$ in a given direction is the distance between two parallel support hyperplanes of $K$ that are perpendicular to this direction.  
Averaging over all directions we obtain the mean width of $K$ which we denote by $W(K)$.

\begin{theorem}\label{Satz2}
Let $K\subset\R^d$ be a convex body with $o\in\inti K$ in which a ball rolls freely.
If $\varrho$ is a  probability density function on $K$ such that $\varrho$ is positive and continuous at each boundary point of $K$, then
\[ 
\begin{split}
& \lim_{n \rightarrow \infty} n^\frac{2}{d+1} \, \E_{\varrho}(W(K) - W(K_{(n)})) \\
&\quad = \frac{2\,c_d}{(d \kappa_d)^{\frac{d+3}{d+1}}} \int_{\partial K}{\kappa(x)^{\frac{d+2}{d+1}} \,\varrho(x)^{-\frac{2}{d+1}} \, \mathcal{H}^{d-1}(\D x)}.
\end{split} 
\]
\end{theorem}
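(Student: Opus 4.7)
The plan is to express the mean width via the support function,
$$W(K) - W(K_{(n)}) = \frac{2}{d\kappa_d} \int_{S^{d-1}} \bigl(h(K,u) - h(K_{(n)},u)\bigr)\, \HH^{d-1}(\D u),$$
take expectation and interchange it with the sphere integral by Fubini. For fixed $u\in S^{d-1}$, introduce the cap $C(u,t) := \{x \in K : \langle x,u\rangle \geq h(K,u) - t\}$ and its probability mass $p_\varrho(u,t) := \int_{C(u,t)} \varrho(x)\,\D x$. Since $h(K_{(n)},u) = \max_{i\le n}\langle X_i,u\rangle$ and the $X_i$ are i.i.d., the event $\{h(K,u) - h(K_{(n)},u) > t\}$ has probability $(1 - p_\varrho(u,t))^n$, so the tail formula gives
$$\E_\varrho\bigl(h(K,u) - h(K_{(n)},u)\bigr) = \int_0^{D(u)} (1 - p_\varrho(u,t))^n\, \D t,$$
with $D(u) = h(K,u) + h(K,-u)$ the width of $K$ in direction $u$.

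Next I would exploit the rolling-ball assumption to derive sharp small-height cap asymptotics. Since a ball of some radius $r>0$ rolls freely inside $K$, the boundary $\partial K$ is of class $C^{1,1}$, the spherical image map $\sigma_K: \partial K \to S^{d-1}$ is single-valued and Lipschitz, the generalized Gaussian curvature $\kappa$ satisfies $\kappa\ge r^{-(d-1)}$ for $\HH^{d-1}$-a.e.\ $u$, and $C(u,t)$ contains a spherical cap of the inscribed ball. A second-order expansion of $\partial K$ at the support point $\tau_K(u)$ yields, for $\HH^{d-1}$-a.e.\ $u$ as $t\to 0^+$,
$$p_\varrho(u,t) \;\sim\; \varrho(\tau_K(u))\cdot \frac{2^{(d+1)/2}\kappa_{d-1}}{d+1}\, \kappa(\tau_K(u))^{-1/2}\, t^{(d+1)/2}=:a(u)\,t^{(d+1)/2}.$$

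With this asymptotic, the substitution $s = n\,a(u)\,t^{(d+1)/2}$ together with $(1-x)^n \le e^{-nx}$ transforms the $t$-integral into one that, by dominated convergence, tends to
$$\frac{2\,\Gamma\!\bigl(\tfrac{2}{d+1}\bigr)}{d+1}\, a(u)^{-2/(d+1)}\, n^{-2/(d+1)}.$$
The rolling-ball assumption is again decisive: combined with the positivity and continuity of $\varrho$ on $\partial K$, it provides a uniform lower bound $p_\varrho(u,t) \ge c_1 t^{(d+1)/2}$ for $t$ in a neighbourhood of $0$ (by comparison with the inscribed ball) and a linear bound $p_\varrho(u,t) \ge c_2 t$ thereafter, producing an integrable dominating function on $S^{d-1}\times(0,\infty)$ after multiplication by $n^{2/(d+1)}$. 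Applying the area formula for $\sigma_K$, under which $\HH^{d-1}(\D u) = \kappa(x)\,\HH^{d-1}(\D x)$ when $u = \sigma_K(x)$, converts
$$\int_{S^{d-1}} a(u)^{-2/(d+1)}\, \HH^{d-1}(\D u) = \biggl(\frac{2^{(d+1)/2}\kappa_{d-1}}{d+1}\biggr)^{-\!2/(d+1)} \int_{\partial K} \varrho(x)^{-2/(d+1)} \kappa(x)^{(d+2)/(d+1)}\,\HH^{d-1}(\D x),$$
and collecting this with the prefactor $2/(d\kappa_d)$ and the definition of $c_d$ produces the stated constant $2c_d/(d\kappa_d)^{(d+3)/(d+1)}$.

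The main technical obstacle is producing the uniform integrable domination needed to pass the $n\to\infty$ limit through the integral over $S^{d-1}$: the pointwise asymptotic for $p_\varrho(u,t)$ is not sufficient on its own, and the rolling-ball condition is used precisely to upgrade it to a uniform-in-$u$ two-sided bound of the type $c\,\min\{t^{(d+1)/2},t\} \lesssim p_\varrho(u,t)$. Once this is in place, what remains is the standard gamma-integral computation based on $\int_0^\infty e^{-s} s^{\alpha-1}\, \D s = \Gamma(\alpha)$.
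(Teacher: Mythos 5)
Your overall strategy is the same as the paper's proof of Theorem~\ref{Satz2} (via Theorem~\ref{Satz2_q} with $q\equiv 1$): represent $\E_\varrho(h_K(u)-h_{K_{(n)}}(u))$ as an integral of $(1-\PP_\varrho(x_1\in C(u,t)))^n$ over the cap height $t$, use the second-order expansion of $\partial K$ at the support point to get the cap asymptotics $\PP_\varrho(x_1\in C(u,t))\sim a(u)\,t^{\frac{d+1}{2}}$, evaluate the resulting Gamma-type integral (the paper's Lemma~\ref{gamma}), dominate uniformly in $u$ via the rolling ball, and convert the sphere integral to $\partial K$ by the coarea identity $\HH^{d-1}(\D u)=\kappa(x)\,\HH^{d-1}(\D x)$ (Lemma~\ref{lemma2_62}). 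Your tail-formula start, $\E_\varrho(h_K(u)-h_{K_{(n)}}(u))=\int_0^{D(u)}(1-\PP_\varrho(x_1\in C(u,t)))^n\,\D t$, is a genuine small simplification: since $0\le h_K(u)-h_{K_{(n)}}(u)\le D(u)$, you do not need the paper's conditioning on the event $o\in K_{(n)}$ (Lemma~\ref{lemNotinKn}), which the paper uses because it treats the weighted functional $W_q$. The constants you obtain do match the paper's.

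Two points need repair. First, you assert that a rolling ball gives $\kappa\ge r^{-(d-1)}$ almost everywhere; the inequality is the reverse, $\kappa(x)\le r^{-(d-1)}$ (a ball rolling \emph{inside} $K$ bounds the principal curvatures from above; the lower bound $\kappa\ge R^{-(d-1)}$ belongs to the circumscribed setting of Theorem~\ref{main}, where $K$ slides freely in a ball). This is not load-bearing for your domination step (which correctly uses the inscribed-ball cap to bound $\PP_\varrho(x_1\in C(u,t))$ from below), but it hides the second issue: because only an upper bound on $\kappa$ is available, the set $\{x\in\partial K:\kappa(x)=0\}$ can have positive $\HH^{d-1}$-measure (e.g.\ bodies with flat boundary pieces admit a rolling ball), and at such points the asymptotic $\PP_\varrho(x_1\in C(u,t))\sim a(u)t^{\frac{d+1}{2}}$ fails ($a(u)$ is not even defined). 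Your ``for $\HH^{d-1}$-a.e.\ $u$'' claim therefore needs an argument: one must show that the set of directions $u$ whose support point is not a normal boundary point with $\kappa>0$ is $\HH^{d-1}$-null on $S^{d-1}$; this does hold here because $\sigma_K$ is Lipschitz with approximate Jacobian $\kappa$, so the spherical image of the non-normal points and of $\{\kappa=0\}$ has measure zero, but it is exactly the step you have skipped. The paper avoids this by first transferring the integral to $\partial K$ (where the factor $\kappa(x)$ appears) and proving a separate lemma that the limit of $\Theta_n(\sigma_K(x))$ vanishes at normal points with $\kappa(x)=0$. Either fix is fine, but one of them must be supplied.
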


If a ball of radius $r >0$ rolls freely inside $K$, then $\kappa(x) \leq r^{-(d-1)}$, so the integral in the statement of Theorem~\ref{Satz2} is finite.

The following theorem provides an asymptotic upper bound on the variance of the mean width $W(K_{(n)})$.

\begin{theorem}\label{thm:var-up}
With the hypotheses and notation of Theorem~\ref{Satz2}, it holds that
\[ \Var_{\varrho}(W(K_{(n)})) \ll n^{-\frac{d+3}{d+1}}, \]
where the implied constant depends only on $K$ and $\varrho$.  
\end{theorem}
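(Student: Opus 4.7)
\medskip
\noindent\textit{Proof plan.}
The plan is to apply the Efron--Stein jackknife inequality and reduce the bound on $\Var_\varrho W(K_{(n)})$ to a sharp second-moment estimate on the effect of removing a single vertex. For the i.i.d.\ points $X_1,\ldots,X_n$ and $K_{(n-1)} := \mathrm{conv}\{X_1,\ldots,X_{n-1}\}\subseteq K_{(n)}$, symmetry and the Efron--Stein inequality yield
\[
\Var_\varrho W(K_{(n)}) \le n\,\E_\varrho\bigl[(W(K_{(n)}) - W(K_{(n-1)}))^2\bigr].
\]
It therefore suffices to show $\E_\varrho[(W(K_{(n)}) - W(K_{(n-1)}))^2] \ll n^{-(2d+4)/(d+1)}$.

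Next I would rewrite the mean width difference as an integral of the support function increment,
\[
W(K_{(n)}) - W(K_{(n-1)}) = \frac{2}{d\kappa_d}\int_{S^{d-1}} \bigl(\langle X_n,u\rangle - h_{K_{(n-1)}}(u)\bigr)_+\,\HH^{d-1}(\D u),
\]
which vanishes unless $X_n\notin K_{(n-1)}$ and is otherwise localized to the solid angle of directions in which $X_n$ becomes a new extremal point of $K_{(n)}$. The deterministic bound $W(K_{(n)}) - W(K_{(n-1)}) \le W(K) - W(K_{(n-1)})$ yields
\[
\bigl(W(K_{(n)}) - W(K_{(n-1)})\bigr)^2 \le \bigl(W(K_{(n)}) - W(K_{(n-1)})\bigr)\,\bigl(W(K) - W(K_{(n-1)})\bigr).
\]
After conditioning on $X_1,\ldots,X_{n-1}$ and using independence of $X_n$, the right-hand side reduces to estimating the product of $W(K) - W(K_{(n-1)})$ with the conditional expected per-point mean width increment.

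The sharp estimate is then obtained via an economical cap covering in the spirit of B\'ar\'any, Reitzner and B\"or\"oczky--Fodor--Reitzner--V\'\i gh~\cite{BoFoReVi2009}. The rolling ball assumption on $K$ provides the boundary regularity needed to cover $\partial K$ by $O(n)$ caps of height of order $n^{-2/(d+1)}$ and base solid angle of order $n^{-(d-1)/(d+1)}$, while the continuity and strict positivity of $\varrho$ at $\partial K$ yields uniform two-sided bounds for $\varrho$ on a suitable boundary layer of $K$. On the typical event that $K_{(n-1)}$ approximates $\partial K$ on the expected scale, a new vertex contributes to the mean width by at most of order $n^{-1}$, and $X_n$ lies outside $K_{(n-1)}$ with probability of order $n^{-2/(d+1)}$. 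Combining these estimates gives $\E_\varrho[(W(K_{(n)}) - W(K_{(n-1)}))^2] \ll n^{-2}\cdot n^{-2/(d+1)} = n^{-(2d+4)/(d+1)}$, which upon substitution into the Efron--Stein bound yields the desired variance estimate.

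The technical heart of the argument, and what I expect to be the main obstacle, is the control of atypical configurations: the deterministic bound $W(K_{(n)}) - W(K_{(n-1)}) \le W(K) - W(K_{(n-1)})$ is lossy when $X_n$ falls into a region where $K_{(n-1)}$ leaves a gap to $\partial K$ much larger than the typical cap height. To suppress these contributions, one would use floating-body or Macbeath-region techniques together with the uniform curvature upper bound $\kappa(x)\le r^{-(d-1)}$ implied by the rolling-ball hypothesis, so that exceptional events carry small enough probability to be absorbed into the main estimate; the weighted setting brings no essential new difficulty since $\varrho$ enters the probability estimates multiplicatively through its uniform lower bound near $\partial K$.
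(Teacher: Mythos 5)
Your starting point---the Efron--Stein jackknife inequality---matches the paper's, but from there the two arguments diverge, and your plan has a genuine gap at the key step.

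The main problem is the bound $\Delta^2\le\Delta\cdot D_{n-1}$ with $\Delta:=W(K_{(n)})-W(K_{(n-1)})$ and $D_{n-1}:=W(K)-W(K_{(n-1)})$, followed by conditioning on $X_1,\dots,X_{n-1}$. After conditioning, the typical order of $\E_\varrho[\Delta\mid X_1,\dots,X_{n-1}]$ is $n^{-(d+3)/(d+1)}$ (it is essentially the derivative in $n$ of $\E_\varrho D_n\sim cn^{-2/(d+1)}$), while $D_{n-1}$ is of order $n^{-2/(d+1)}$. The product is of order $n^{-(d+5)/(d+1)}$, which after multiplication by $n$ in Efron--Stein gives $n^{-4/(d+1)}$---strictly weaker than the target $n^{-(d+3)/(d+1)}$ whenever $d\ge 2$. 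So this chain of inequalities, taken as stated, cannot close. Your concluding heuristic ($\Delta\ll n^{-1}$ deterministically on a typical event and $\PP_\varrho(X_n\notin K_{(n-1)})\sim n^{-2/(d+1)}$, hence $\E_\varrho[\Delta^2]\ll n^{-(2d+4)/(d+1)}$) does give the right exponent, but it abandons the $\Delta\cdot D_{n-1}$ bound and is not made rigorous: the deterministic bound $\Delta\ll n^{-1}$ requires a uniform two-sided control of the gap $h_K-h_{K_{(n-1)}}$ at scale $n^{-2/(d+1)}$ simultaneously over all directions, plus a separate tail estimate to suppress atypical configurations. You flag floating-body/Macbeath arguments for this, but that is where the real work lies, and it is not carried out.

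By contrast, the paper avoids the typical/atypical dichotomy altogether. After the Efron--Stein inequality (applied by adding an $(n+1)$-st point), it rewrites the second moment by Fubini as a four-fold integral over directions $u,v\in S^{d-1}$ and heights $s,t$, with integrand
\[
\mathbb{P}_\varrho\bigl(x_{n+1}\in C(u,s)\cap C(v,t)\bigr)\,\bigl(1-\mathbb{P}_\varrho(x_1\in C(u,s)\cup C(v,t))\bigr)^n,
\]
then bounds it by $\mathbb{P}_\varrho^+\,(1-\mathbb{P}_\varrho^+)^n$ with $\mathbb{P}_\varrho^+$ the larger of the two cap probabilities. Two ingredients close the argument: the elementary pointwise inequality $\alpha(1-\alpha)^n\le\tfrac{3}{n}(1-\tfrac{2\alpha}{3})^n$, which absorbs the factor $n$ from Efron--Stein, and the geometric estimate $\HH^{d-1}(\Sigma(u,s;b))\ll a^{(d-1)/2}$ on the set of directions $v$ whose cap of height $b$ can intersect the cap of height $a=h_K(u)-s$. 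Integrating and applying Lemma~\ref{gamma} with $\beta=\tfrac{d+1}{2}$ then gives $n^{-(d+3)/(d+1)}$ directly, with no case distinction, no uniform floating-body control, and no Macbeath machinery. If you wish to pursue your route, you need to replace the $\Delta\cdot D_{n-1}$ step by a genuine second-moment bound (the solid-angle localization of the region where $X_n$ becomes extremal is exactly what the paper's $\Sigma(u,s;b)$ estimate encodes), and supply a quantitative tail bound for the atypical gap; as written, the proposal does not establish the theorem.
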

A lower bound of the same order can be obtained by similar arguments as in \cite{BoFoReVi2009}.
The upper bound yields a law of large numbers for the random variable
$W(K_{(n)})$ similarly as in \cite{BoFoReVi2009}.

\begin{theorem}\label{thm:law-of-large-num}
With the hypotheses and notation of Theorem~\ref{Satz2}, it holds that
\[ \begin{split}
& \lim_{n \rightarrow \infty}\left (W(K)-W(K_{(n)})\right ) \cdot n^\frac{2}{d+1} \\
&\quad = \frac{2\,c_d}{(d \kappa_d)^{\frac{d+3}{d+1}}} \int_{\partial K}{\kappa(x)^{\frac{d+2}{d+1}} \, \varrho(x)^{-\frac{2}{d+1}}\, \mathcal{H}^{d-1}(\D x)}
\end{split} \]
with probability $1$.
\end{theorem}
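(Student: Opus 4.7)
The plan is to deduce the almost sure statement from the variance upper bound of Theorem~\ref{thm:var-up} by applying Chebyshev's inequality along a polynomial subsequence, and then to interpolate to all $n$ using the almost sure monotonicity of $W(K_{(n)})$ in $n$. This is the classical scheme already used in \cite{BoFoReVi2009}.

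Set $X_n := W(K) - W(K_{(n)})$ and let $L$ denote the integral appearing on the right-hand side of the theorem. Since adjoining an extra sample point can only enlarge the convex hull $K_{(n)}$, the random variable $W(K_{(n)})$ is nondecreasing in $n$ with probability one, so $X_n \geq 0$ is almost surely nonincreasing. Theorem~\ref{Satz2} gives $n^{2/(d+1)} a_n \to L$ with $a_n := \E_\varrho X_n$, and Theorem~\ref{thm:var-up} gives $\Var_\varrho(X_n) \ll n^{-(d+3)/(d+1)}$. Combining these two estimates via Chebyshev's inequality yields
\[ \PP\bigl(|X_n - a_n| \geq \epsilon\, a_n\bigr) \leq \frac{\Var_\varrho(X_n)}{(\epsilon a_n)^2} \ll \epsilon^{-2}\, n^{-(d-1)/(d+1)} \]
for every $\epsilon>0$. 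For $d\geq 2$ this is not summable in $n$, so I would pass to the subsequence $n_k := \lceil k^\alpha \rceil$ with a fixed $\alpha > (d+1)/(d-1)$. Along this subsequence the bound becomes $\ll \epsilon^{-2}\, k^{-\alpha(d-1)/(d+1)}$, which is summable in $k$. The Borel--Cantelli lemma then yields $X_{n_k}/a_{n_k} \to 1$ almost surely, and therefore $n_k^{2/(d+1)} X_{n_k} \to L$ almost surely.

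To transfer the convergence to arbitrary $n$, for $n_k \leq n \leq n_{k+1}$ the monotonicity of $X_n$ gives
\[ X_{n_{k+1}}\, n_k^{2/(d+1)} \leq X_n\, n^{2/(d+1)} \leq X_{n_k}\, n_{k+1}^{2/(d+1)}. \]
Writing the right-hand side as $\bigl(n_k^{2/(d+1)} X_{n_k}\bigr)\cdot (n_{k+1}/n_k)^{2/(d+1)}$ and noting that $n_{k+1}/n_k \to 1$, the upper bound tends almost surely to $L$; the analogous rearrangement handles the lower bound. This yields $n^{2/(d+1)} X_n \to L$ almost surely, as claimed.

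I do not expect a substantial obstacle in this step: the only nontrivial ingredient is the variance bound, which is already supplied by Theorem~\ref{thm:var-up}. The mild subtlety is to choose the subsequence growing fast enough that the Chebyshev bound is summable but slowly enough that consecutive ratios tend to one, and any polynomial rate $k^\alpha$ with $\alpha$ sufficiently large achieves both simultaneously.
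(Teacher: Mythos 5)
Your proposal is correct and follows essentially the same route as the paper: the paper's proof (of the more general Theorem~\ref{thm:law-of-large-num_q}) also applies Chebyshev's inequality with the variance bound of Theorem~\ref{thm:var-up_q} to get a deviation probability $\ll n^{-(d-1)/(d+1)}$, and then finishes exactly as in \cite{BoFoReVi2009} via a polynomial subsequence, Borel--Cantelli, and the monotonicity of $W(K)-W(K_{(n)})$ in $n$, which is the interpolation argument you spell out. The only cosmetic difference is that you use a relative deviation $\epsilon\,a_n$ (harmless here since the limit integral is positive, $\varrho$ being bounded), whereas the paper uses the absolute deviation $\varepsilon\, n^{-2/(d+1)}$.
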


In Section 3, we first obtain Theorems~\ref{Satz2} - \ref{thm:law-of-large-num} as special cases of Theorems~\ref{Satz2_q}, \ref{thm:var-up_q} and \ref{thm:law-of-large-num_q}. 
Then, more general cases of Theorems~\ref{main} - \ref{thm:lln-circ} are proved in Section 4 using polarity and Theorems~~\ref{Satz2_q}, \ref{thm:var-up_q} and \ref{thm:law-of-large-num_q}, respectively.

\section{Preliminaries}
Henceforth, $K$ denotes a convex body in the $d$-dimensional Euclidean space $\R^d$ ($d\geq 2$), that is, a compact convex set with nonempty interior. 
We use $\langle\cdot, \cdot\rangle$ for the Euclidean scalar product and $\|\cdot\|$ for the Euclidean norm in $\R^d$. For a comprehensive treatment of the theory of convex bodies, we refer to the books by Gruber \cite{Gr2007} and Schneider \cite{Sch2014}.
The $j$-dimensional Hausdorff measure is denoted by $\HH^j$, and, in particular,
$d$-dimensional volume is denoted by $V$. The unit radius closed ball centred at the origin $o$ is $B^d$ and its boundary $\partial B^d$ is $S^{d-1}$.
We use $\kappa_d=V(B^d)$ for its volume. 
The convex hull of subsets $X_1,\ldots,X_r\subset\R^d$ and points $z_1,\ldots,z_s\in\R^d$ is denoted by $[X_1,\ldots,X_r,z_1,\ldots,z_s]$.

Recall that $\partial K$ denotes the boundary of the convex body $K$.
Let $\inti K$ be the interior of $K$. 
We say that $\partial K$ is twice differentiable in the generalized sense at $x \in\partial K$ if there exists a quadratic form $Q$ in $\R^{d-1}$ with the following property: 
If $K$ is positioned in such a way that $x=o$ and $\R^{d-1}$ is a support hyperplane of $K$ at $x$, then, in a neighbourhood of $o$, $\partial K$ is the graph of a convex function $f$ defined on a $(d-1)$-dimensional ball around $o$ in $\R^{d-1}$ satisfying
$$f(z)=\frac{1}{2}Q(z)+o(\|z\|^2),\quad \text{ as } z\to 0.$$
Here $o(\cdot)$ denotes the Landau symbol.
We call $Q$ the generalized second fundamental form of $\partial K$ at $x$, and $\kappa(x)=\det Q$ is the generalized Gaussian curvature at $x\in\partial K$. 
We refer to a point $x\in\partial K$, where $\partial K$ is twice differentiable in the generalized sense, as a normal boundary point. 
(Note that this terminology is different to that in \cite{Sch2014}.)
According to a classical result of Alexandrov (see Theorem~5.4 in \cite{Gr2007} or Theorem~2.6.1 in \cite{Sch2014}), $\partial K$ is twice differentiable in the generalized sense almost everywhere with respect to $\mathcal{H}^{d-1} \llcorner \partial K$, the $(d-1)$-dimensional Hausdorff measure restricted to $\partial K$. 

If $K$ has a rolling ball of radius $r(K)>0$, that is, any $x\in \partial K$ lies on the boundary of some Euclidean ball $B$ of radius $r(K)$ with $B\subset K$, then $K$ is smooth, that is, all support hyperplanes to $K$ are unique. 
More general, it is shown in \cite{Hug2000} that the existence of a rolling ball is equivalent to the fact that the exterior unit normal is a Lipschitz map on $\partial K$. 
In this situation, we write $\sigma_K :\partial K\to S^{d-1}$
for the Gauss map, that is, $\sigma_K(x)$ is the outer unit normal
vector of $\partial K$ at $x$. 

For a general convex body $K$, the support function $h_K:\R^d\to\R$ of $K$ is defined
as
$$h_K(u):=\max \{\langle u,x\rangle : x\in K\}, \quad u\in\R^d.$$
We also define the set
\[ D_K := \{(t,u) \in [0,\infty) \times S^{d-1} : t=h_K(u)\}. \]
The width of the convex body $K$ in the direction $u\in S^{d-1}$ is defined as
$$w_K(u):=h_K(u)+h_K(-u),$$
and the mean width of $K$ is defined as
$$W(K):=\frac{1}{d\kappa_d}\int_{S^{d-1}}w_K(u)\,\HH^{d-1}(\D u)=
\frac{2}{d\kappa_d}\int_{S^{d-1}}h_K(u)\,\HH^{d-1}(\D u).$$

Let $f:S^{d-1}\to \R$ be a measurable function. Then by the following lemma, it holds that 
$$\int_{S^{d-1}}f(u)\, \HH^{d-1}(\D u)=\int_{\partial K} f(\sigma_K(x))\kappa(x)\, \HH^{d-1}(\D x).$$
This formula was proved for convex bodies of class $\mathcal{C}_+^2$ in \cite{Sch2014} (see formula (2.62)) and used in \cite{BoFoReVi2009}.

\begin{lemma}
\label{lemma2_62}
Let $K$ be a convex body in $\R^d$ in which a ball rolls freely, and let $f$ be a measurable function on $S^{d-1}$.
Then 
\[ \int_{S^{d-1}}f(u)\, \mathcal{H}^{d-1}(\D u)= \int_{\partial K}f(\sigma_K(x))\kappa(x)\, \mathcal{H}^{d-1}(\D x).\]
\end{lemma}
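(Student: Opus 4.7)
The idea is to interpret the left-hand side as the pushforward under the Gauss map $\sigma_K$ of the weighted Hausdorff measure $\kappa(x)\,\mathcal{H}^{d-1}(\D x)$ on $\partial K$. Since a ball rolls freely in $K$, the result of \cite{Hug2000} recalled in Section~2 implies that $\sigma_K\colon\partial K\to S^{d-1}$ is a Lipschitz map between $(d-1)$-dimensional Lipschitz manifolds, so the area formula for Lipschitz maps applies and gives
\[
\int_{\partial K} f(\sigma_K(x))\,|J\sigma_K(x)|\,\mathcal{H}^{d-1}(\D x)=\int_{S^{d-1}} f(u)\,\#\sigma_K^{-1}(u)\,\mathcal{H}^{d-1}(\D u).
\]
Two identifications then reduce this to the claim.

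First, I would verify that $|J\sigma_K(x)|=\kappa(x)$ for $\mathcal{H}^{d-1}$-almost every $x\in\partial K$. By the Alexandrov theorem stated in Section~2, $\mathcal{H}^{d-1}$-almost every $x\in\partial K$ is a normal boundary point, and in the local chart fixed there $\partial K$ is the graph of a convex function $f$ satisfying $f(z)=\tfrac12 Q(z)+o(\|z\|^2)$ near $0$. Using the standard expression $\sigma_K(z,f(z))=(-\nabla f(z),1)/\sqrt{1+\|\nabla f(z)\|^2}$, a direct linearization at $z=0$ shows that the differential of $\sigma_K$ at the reference point is, up to sign, the symmetric form $Q$ viewed as a linear map of the tangent hyperplane, whence $|J\sigma_K(x)|=|\det Q|=\kappa(x)$. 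Because $\sigma_K$ is Lipschitz, Rademacher's theorem guarantees that the classical almost-everywhere differential appearing in the area formula coincides with this generalized one on a full-measure subset of $\partial K$.

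Second, I would show that $\#\sigma_K^{-1}(u)=1$ for $\mathcal{H}^{d-1}$-almost every $u\in S^{d-1}$. The multiplicity at $u$ is at least two precisely when the support set $F(K,u)=\{x\in K:\langle x,u\rangle=h_K(u)\}$ contains more than one point, which in turn is equivalent to $h_K$ failing to be differentiable at $u$. Since $h_K$ is positively $1$-homogeneous and Lipschitz on $\R^d$, Rademacher's theorem together with this homogeneity implies that the set of exceptional directions has vanishing $\mathcal{H}^{d-1}$-measure on $S^{d-1}$. Inserting both identifications into the area formula yields the asserted identity.

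The main obstacle I expect is the technical identification in the first step: establishing cleanly that, at $\mathcal{H}^{d-1}$-almost every normal boundary point, the Jacobian supplied by Rademacher's theorem for the Lipschitz map $\sigma_K$ coincides with the determinant of the generalized second fundamental form supplied by Alexandrov's theorem. Once the two notions of differentiability are reconciled on a common full-measure subset of $\partial K$, the area formula and the almost-everywhere injectivity of $\sigma_K$ do the rest.
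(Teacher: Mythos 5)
Your proposal follows essentially the same route as the paper: both rely on $\sigma_K$ being Lipschitz when a rolling ball exists, identify the (approximate) Jacobian of $\sigma_K$ with $\kappa$ almost everywhere, invoke almost-everywhere injectivity, and then apply the area/coarea formula for Lipschitz maps (the paper uses Federer's coarea formula where you use the area formula, but in this equidimensional setting these coincide). The paper simply cites prepared tools: Lemma~3.3 in \cite{HugSchn2013} for the Lipschitz continuity, Lemma~2.3 in \cite{Hug1996} for the identification $\mathrm{apJ}_{d-1}\sigma_K(x)=\kappa(x)$ a.e., and Theorem~2.2.11 in \cite{Sch2014} for a.e.\ injectivity.

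One place where your sketch is imprecise: the claim that ``a direct linearization at $z=0$ shows that the differential of $\sigma_K$ is $Q$'' does not follow from Alexandrov's theorem alone. The expansion $f(z)=\tfrac12 Q(z)+o(\|z\|^2)$ is a statement about $f$, not about $\nabla f$; it does not by itself imply $\nabla f(z)=Qz+o(\|z\|)$, i.e.\ Fr\'echet differentiability of $\nabla f$ at $0$ with derivative $Q$. You do correctly flag this as the central technical obstacle and indicate the right repair (on the full-measure set where both the Alexandrov expansion and Rademacher differentiability of the Lipschitz Gauss map hold, the two differentials must agree, e.g.\ by integrating the Rademacher expansion along segments). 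The paper avoids laboring this point by citing \cite{Hug1996}*{Lemma~2.3}, which packages exactly this reconciliation. So the structure of your argument is sound, but the first step would need to be phrased as this a.e.\ coincidence rather than a pointwise ``direct linearization.''
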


\begin{proof}
Since a ball rolls freely in $K$, the map $\sigma_K$ is defined everywhere on $\partial K$ and Lipschitz continuous (see Lemma~3.3 in \cite{HugSchn2013}).
Moreover, Lemma 2.3 in \cite{Hug1996} yields that the (approximate) Jacobian of $\sigma_K$ is 
\[ \text{apJ}_{d-1}\sigma_K(x)=\kappa(x) \]
for $\mathcal{H}^{d-1}$ almost all $x\in\partial K$. 
Using Federer's coarea formula (see Theorem~3.2.12 in \cite{Federer1969}), we obtain
\begin{align*}
\int_{\partial K}f(\sigma_K(x))\kappa(x)\, \mathcal{H}^{d-1}(\D x)&=\int_{\partial K} f(\sigma_K(x))\, \text{apJ}_{d-1}\sigma_K(x)\, \mathcal{H}^{d-1}(\D x)\\
&= \int_{S^{d-1}}\int_{\sigma_K^{-1}(\{u\})}f(\sigma_K(x))\, \mathcal{H}^0(\D x)\, \mathcal{H}^{d-1}(\D u)\\
&= \int_{S^{d-1}}f(u)\, \mathcal{H}^{d-1}(\D u),
\end{align*}
where we used that for $\mathcal{H}^{d-1}$ almost all $u\in S^{d-1}$ there is exactly one $x\in\partial K$ with $u=\sigma_K(x)$ (see Theorem~2.2.11 in \cite{Sch2014}).
\end{proof}

We will use the following slightly extended statement from \cite{BoFoReVi2009} several times throughout the paper. 
\begin{lemma}\label{gamma}
Let $\beta\geq 0$ and $\omega>0$.
Let $\mu: (0,\infty) \rightarrow \R$ with $\lim_{t \rightarrow 0+} \mu(t) = 1$. 
If $g(n) \rightarrow 0$ as $n \rightarrow \infty$ and
$g(n)\geq \left ( \frac{2(\alpha+1)}{\omega} \frac{\ln n}{n}\right )^{\frac2{d+1}}$ for sufficiently large $n$ with $\alpha=\frac{2(\beta+1)}{d+1}$, then
\begin{equation*}
\int_0^{g(n)}t^{\beta}(1-\mu(t)\,\omega t^{\frac{d+1}2})^n\,\D t\sim
\frac2{(d+1)\omega^{\frac{2(\beta+1)}{d+1}}}\cdot
\Gamma\left(\frac{2(\beta+1)}{d+1}\right)n^{-\frac{2(\beta+1)}{d+1}}.
\end{equation*}
\end{lemma}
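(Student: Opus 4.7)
The plan is to perform the Laplace-type rescaling $s := \omega n\, t^{(d+1)/2}$, since this is the substitution that turns the $n$-th power into the familiar form $(1-\mu(t)s/n)^n$ that approximates $e^{-s}$. Setting $\alpha := 2(\beta+1)/(d+1)$ and $t(s) := (s/(\omega n))^{2/(d+1)}$, a routine Jacobian computation gives
\[
t^\beta\,\D t \,=\, \frac{2}{(d+1)\,\omega^\alpha\, n^\alpha}\, s^{\alpha-1}\,\D s,
\]
so the integral on the left-hand side equals
\[
\frac{2}{(d+1)\,\omega^\alpha\, n^\alpha}\int_0^{s_n} s^{\alpha-1}\Bigl(1-\mu(t(s))\tfrac{s}{n}\Bigr)^{\!n}\,\D s,\qquad s_n := \omega n\, g(n)^{(d+1)/2}.
\]
It thus suffices to prove that the inner integral tends to $\Gamma(\alpha) = \int_0^\infty s^{\alpha-1}e^{-s}\,\D s$.

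I would finish with dominated convergence. For the pointwise limit, fix $s>0$: then $t(s)\to 0$ as $n\to\infty$, hence $\mu(t(s))\to 1$, and expanding $n\log(1-\mu(t(s))s/n)$ gives $(1-\mu(t(s))s/n)^n\to e^{-s}$. For the dominating function, use $g(n)\to 0$ together with $\mu(t)\to 1$ to select $n_0$ large enough that $\mu(t)\in[1/2,3/2]$ and $\mu(t)\omega t^{(d+1)/2}\in[0,1]$ for every $n\geq n_0$ and $t\in(0,g(n)]$; this is possible because $\omega g(n)^{(d+1)/2}\to 0$. The elementary inequality $(1-x)^n\leq e^{-nx}$ for $x\in[0,1]$ then yields the uniform bound
\[
s^{\alpha-1}\Bigl(1-\mu(t(s))\tfrac{s}{n}\Bigr)^{\!n}\,\ii_{(0,s_n]}(s) \,\leq\, s^{\alpha-1}\,e^{-s/2}
\]
on $(0,\infty)$, which is integrable. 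The hypothesis $g(n)\geq(2(\alpha+1)\ln n/(\omega n))^{2/(d+1)}$ is precisely what guarantees $s_n\geq 2(\alpha+1)\ln n\to\infty$, so the indicator converges to $1$ for every fixed $s$, and dominated convergence gives the desired limit $\Gamma(\alpha)$. Multiplying by the prefactor produces the stated asymptotic.

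The proof is essentially bookkeeping once the rescaling is in place; the main point to keep an eye on is the interplay of the two conditions on $g(n)$. The requirement $g(n)\to 0$ is what makes the domination uniform in $n$ by keeping $\mu$ close to $1$ and the bracketed quantity in $[0,1]$, while the explicit lower bound $g(n)\geq(2(\alpha+1)\ln n/(\omega n))^{2/(d+1)}$ is what ensures that the rescaled upper limit $s_n$ sweeps out all of $(0,\infty)$---without it one would only recover an incomplete Gamma integral rather than the full $\Gamma(\alpha)$. This balance between the shrinking of $g(n)$ and its not shrinking too fast is the only subtle feature of the argument.
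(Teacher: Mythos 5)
Your proof is correct: the rescaling $s=\omega n\,t^{(d+1)/2}$ produces exactly the prefactor $\tfrac{2}{(d+1)\omega^{\alpha}n^{\alpha}}$, the pointwise limit $(1-\mu(t(s))s/n)^n\to e^{-s}$ is justified since $t(s)\to 0$ for fixed $s$, and your choice of $n_0$ (using $g(n)\to 0$ to force $\mu\in[1/2,3/2]$ and $\mu(t)\omega t^{(d+1)/2}\in[0,1]$ on $(0,g(n)]$) legitimizes the domination by $s^{\alpha-1}e^{-s/2}$, which is integrable because $\alpha=2(\beta+1)/(d+1)>0$. Note, however, that the paper does not prove this lemma at all: it is quoted as a ``slightly extended statement'' from \cite{BoFoReVi2009}, so there is no in-paper argument to compare with, and your write-up is a genuine self-contained substitute. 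One observation worth making explicit: your dominated-convergence route only uses the lower bound on $g(n)$ to conclude $s_n=\omega n\,g(n)^{(d+1)/2}\to\infty$, so you in fact prove the asymptotics under the weaker hypothesis $n\,g(n)^{(d+1)/2}\to\infty$; the specific logarithmic threshold $g(n)\geq\bigl(\tfrac{2(\alpha+1)}{\omega}\tfrac{\ln n}{n}\bigr)^{2/(d+1)}$ is what the authors need elsewhere (it makes the discarded outer ranges decay like negative powers of $n$, as in the $O(n^{-3})$ estimates in the proof of Theorem~\ref{Satz2_q}), but it is not needed for the limit itself. Since the stated hypothesis implies yours, the proof as written is complete.
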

We shall apply Lemma~\ref{gamma} with $g(n) = \gamma \left(\frac{\ln n}{n}\right)^{\frac{1}{d}}$ and a constant $\gamma >0$.

The notation $\Gamma(\cdot)$ stands for Euler's gamma function. 
For real functions $f$ and $g$ defined on the same space $I \subset \R$, we write
$f\ll g$ or $f=O(g)$ if there exists a positive constant $c$, depending on $K$ and possibly other functions (such as $\varrho$ and $q$), such that 
$|f|\leq c\cdot g$ on $I$. 
We write $f\sim g$ if $I = \mathbb{N}$ and $f(n)/g(n) \rightarrow 1$ as $n \rightarrow \infty$, $n \in I$.

\section{Weighted mean width approximation by inscribed polytopes}
Let us recall a general probability model (see \cite{BoFoHu2010}) for a random polytope inscribed in a $d$-dimensional convex body $K\subset\R^d$. 
We use the word {\em inscribed} in the sense that the resulting random polytope is contained in $K$, however, its vertices do not necessarily lie on $\partial K$. 

Let $\varrho$ be a bounded nonnegative measurable function on $K$. 
Without loss of generality, we may assume that $\int_K \varrho (x)\, \HH^d (\D x)=1$. 
We choose the random points from $K$ according to the probability measure $\PP_{\varrho, K}$ which has density $\varrho$ with respect to $\mathcal{H}^d \llcorner K$.
We denote the mathematical expectation with respect to $\PP_{\varrho, K}$ by 
$\E_{\varrho, K}$ or, if $K$ is clear from the context, then we simply use $\PP_\varrho$ and $\E_{\varrho}$. 
We also use the simplified notation $\PP_\varrho$ instead of $\PP_\varrho^{\otimes n}$.

Let $X_n:=\{x_1, \ldots, x_n\}$ be a sample of $n$ independent random points from $K$ chosen according to the probability distribution $\PP_{\varrho, K}$. 
The convex hull 
$$K_{(n)}:=[X_n]=[x_1, \ldots, x_n]$$ 
is a random polytope inscribed in $K$.

Let $q$ be a nonnegative measurable function on $\R \times S^{d-1}$.
We define the {\em weighted mean width} of a convex body $K$ as 
$$W_q(K) := \frac{2}{d \kappa_d} \int_{S^{d-1}} \int_0^{h_K(u)} q(s,u) \, \D s \, \mathcal{H}^{d-1}(\D u)$$
and call $q$ locally integrable if the integral
\[ \int_{S^{d-1}} \int_C q(s,u) \, \D s \, \mathcal{H}^{d-1}(\D u) \] 
is finite for all compact subsets $C$ of $\R$.

\subsection{Proof of Theorem~\ref{Satz2}}
In this subsection we prove the following theorem which implies Theorem~\ref{Satz2} if $q \equiv 1$.
\begin{theorem}\label{Satz2_q}
Let $K\subset\R^d$ be a convex body with $o\in\inti K$ in which a ball rolls freely.
Let $\varrho$ be a probability density function on $K$ and $q: \mathbb{R} \times S^{d-1} \rightarrow [0, \infty)$ a locally integrable function.
If $\varrho$ is positive and continuous at each boundary point of $K$ and $q$ is continuous at each point of $D_K$, then
\[ 
\begin{split}
& \lim_{n \rightarrow \infty}{ n^\frac{2}{d+1} \, \E_{\varrho}{\left (\frac{2}{d \kappa_d} \int_{S^{d-1}}{ \int_{h_{K_{(n)}}(u)}^{h_K(u)} {q(s,u) \, \D s} \, \mathcal{H}^{d-1}(\D u)}\right )}} \\
&\quad = \frac{2\,c_d}{(d \kappa_d)^{\frac{d+3}{d+1}}} \int_{\partial K}{\kappa(x)^{\frac{d+2}{d+1}} \, q(h_K(\sigma_K(x)), \sigma_K(x)) \,\varrho(x)^{-\frac{2}{d+1}} \, \mathcal{H}^{d-1}(\D x)}.
\end{split} 
\]
\end{theorem}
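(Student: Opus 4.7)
The plan is to reduce the expectation to a surface integral of cap probabilities via Fubini, apply Lemma~\ref{gamma} pointwise to obtain the asymptotic, pass to the limit under the outer integral by dominated convergence, and transfer the result from $S^{d-1}$ to $\partial K$ using Lemma~\ref{lemma2_62}. For each direction $u\in S^{d-1}$, the event $\{h_{K_{(n)}}(u)<s\}$ occurs iff every sample point lies outside the cap $C(s,u):=K\cap\{y:\langle y,u\rangle\ge s\}$. Setting $F(s,u):=\int_{C(s,u)}\varrho\,\D\HH^d$ and substituting $t:=h_K(u)-s$, Fubini therefore rewrites the expectation in the theorem as $\frac{2}{d\kappa_d}\int_{S^{d-1}}I_n(u)\,\HH^{d-1}(\D u)$, where
\[
I_n(u):=\int_0^{h_K(u)}q(h_K(u)-t,u)\bigl(1-F(h_K(u)-t,u)\bigr)^n\,\D t.
\]

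By the rolling ball hypothesis and Alexandrov's theorem, for $\HH^{d-1}$-a.e.\ $u\in S^{d-1}$ the point $x_u:=\sigma_K^{-1}(u)$ exists uniquely and is a normal boundary point of $K$. The standard cap-volume expansion at a normal boundary point, combined with the continuity of $\varrho$ at $x_u$, then yields
\[
F(h_K(u)-t,u)=\omega(u)\,t^{(d+1)/2}\bigl(1+o(1)\bigr),\qquad t\downarrow 0,
\]
with $\omega(u):=\varrho(x_u)\cdot\frac{2^{(d+1)/2}\kappa_{d-1}}{d+1}\cdot\kappa(x_u)^{-1/2}$. Splitting $I_n(u)$ at $g(n)=\gamma(\ln n/n)^{1/d}$, the tail on $[g(n),h_K(u)]$ decays super-polynomially; on the main part Lemma~\ref{gamma} (with $\beta=0$ and $\omega=\omega(u)$), together with the continuity of $q$ at $(h_K(u),u)\in D_K$, gives
\[
n^{2/(d+1)}\,I_n(u)\longrightarrow q(h_K(u),u)\cdot\frac{2\,\Gamma(2/(d+1))}{(d+1)\,\omega(u)^{2/(d+1)}}
\]
for $\HH^{d-1}$-a.e.\ $u\in S^{d-1}$.

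To interchange this limit with the $\HH^{d-1}$-integral over $S^{d-1}$ I would invoke dominated convergence, and then apply Lemma~\ref{lemma2_62} to $f(u):=q(h_K(u),u)\,\omega(u)^{-2/(d+1)}$ in order to convert the surface integral on $S^{d-1}$ into one on $\partial K$, which introduces the factor $\kappa(x)$. Recognizing $\omega(\sigma_K(x))^{-2/(d+1)}=\varrho(x)^{-2/(d+1)}\,\kappa(x)^{1/(d+1)}\cdot(d+1)^{2/(d+1)}/(2\,\kappa_{d-1}^{2/(d+1)})$ and combining with the prefactor $\frac{2}{d\kappa_d}$ produces the combined exponent $\kappa(x)^{(d+2)/(d+1)}$ and collapses the numerical constants to $2c_d/(d\kappa_d)^{(d+3)/(d+1)}$. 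The main obstacle is the dominated convergence step, which requires a uniform lower bound $F(h_K(u)-t,u)\gg \min(t^{(d+1)/2},1)$ valid across $u\in S^{d-1}$ and $t\in(0,h_K(u)]$; this follows by sandwiching every cap of $K$ with a cap of the Euclidean ball of radius $r(K)$ rolling inside $K$ tangent to $\partial K$ at $x_u$, together with the strictly positive lower bound on $\varrho$ near $\partial K$ provided by continuity and the compactness of $\partial K$.
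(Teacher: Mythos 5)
Your proposal follows essentially the same architecture as the paper's proof (Fubini, cap-volume expansion at normal boundary points, Lemma~\ref{gamma} pointwise, dominated convergence, and the area-formula Lemma~\ref{lemma2_62}), but there is a concrete gap in the Fubini step. Writing the inner integral as $\int_0^{h_K(u)} q(s,u)\,\ii\{h_{K_{(n)}}(u)<s\}\,\D s = \int_{h_{K_{(n)}}(u)}^{h_K(u)} q(s,u)\,\D s$ is valid only when $h_{K_{(n)}}(u)\ge 0$, i.e.\ when $o\in K_{(n)}$; if $o\notin K_{(n)}$ the left side misses the contribution on $(h_{K_{(n)}}(u),0)$. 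Your $I_n(u)$ therefore does not equal the expectation as claimed. The paper handles this by first inserting the indicator $\ii\{o\in K_{(n)}\}$ and showing (Lemma~\ref{lemNotinKn}) that the complementary event has exponentially small probability, so that its contribution is $O(e^{-\gamma_1 n})$ and negligible. You need the analogous step; without it the identity you start from is false.

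Two further points on which your sketch is less careful than the paper but not wrong in substance. First, you compute the pointwise limit on $S^{d-1}$ and only afterwards transfer to $\partial K$, whereas the paper converts first and then takes limits on $\partial K$; your order is fine, but your formula for the limit uses $\omega(u)^{-2/(d+1)}\propto \kappa(x_u)^{1/(d+1)}$, which implicitly presumes $\kappa(x_u)>0$. You should note that this is harmless: by the area formula for $\sigma_K$, the image in $S^{d-1}$ of the set $\{\kappa=0\}\subset\partial K$ is $\HH^{d-1}$-null, so the expansion is available for a.e.\ $u$. The paper instead keeps the integral on $\partial K$ and proves a separate lemma showing the pointwise limit vanishes where $\kappa(x)=0$, which is equivalent. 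Second, Lemma~\ref{gamma} as stated does not accommodate the $t$-dependent factor $q(h_K(u)-t,u)$ inside the integral; you need to replace it by $q(h_K(u),u)$ and show the resulting error is $o(n^{-2/(d+1)})$ uniformly in $u$, which is what the paper does with the $\varepsilon$-argument based on continuity of $q$ on the compact set $D_K$. Your phrase ``together with the continuity of $q$'' gestures at this but should be made precise.
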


The quantity 
$$\E_{\varrho}{\left(\frac{2}{d \kappa_d} \int_{S^{d-1}} \int_{h_{K_{(n)}}(u)}^{h_K(u)} q(s,u) \, \D s \, \mathcal{H}^{d-1}(\D u)\right )}$$
in Theorem~\ref{Satz2_q} can be interpreted as the expectation of the weighted mean width difference of $K$ and the inscribed random polytope $K_{(n)}$, that is,
$$\E_{\varrho}\left (W_q(K)-W_q(K_{(n)})\right ).$$ 

\begin{proof}
For $u\in S^{d-1}$ and $t\in \R$, we define the 
hyperplane  $H(u,t):=\{y\in\R^d: \langle u,y\rangle = t\}$
and the closed halfspaces
$H^+(u,t):=\{y\in\R^d: \langle u,y\rangle \geq t\}$ and
$H^-(u,t):=\{y\in\R^d: \langle u,y\rangle \leq t\}$.
We also define the set $C(u,t):= K \cap H^+(u,t)$.
Let $x\in\partial K$ and $t\in (0, h_K(\sigma_K(x))$. 
Then $C(\sigma_K(x),h_K(\sigma_K(x))-t)$ is called a cap of height $t$ at $x\in\partial K$. 
 
In general, $\gamma_1,\gamma_2,\ldots$ will denote positive constants depending only on $K$, $\varrho$ and $q$. 
We will use $r(K)$ to denote the radius of a ball which rolls freely in $K$.
Without loss of generality, we may assume that $r(K)<1$.  

Let $L$ be an $i$-dimensional linear subspace in $\R^d$ in which an orthonormal
basis is fixed. The corresponding $(i-1)$-dimensional coordinate
hyperplanes (in $L$) divide $L$ into $2^i$ convex cones which
we call coordinate corners.

We start the proof of Theorem~\ref{Satz2_q} by ``conditioning'' on the event 
that the origin is contained in $K_{(n)}$. 
Then
\begin{align*} 
&\E_{\varrho}\left(\frac{2}{d \kappa_d} \int_{S^{d-1}} \int_{h_{K_{(n)}}(u)}^{h_K(u)} q(s,u) \, \D s \, \mathcal{H}^{d-1}(\D u)\right) \\
&\ = \frac{2}{d \kappa_d} \int_{K^n} \int_{S^{d-1}} \int_{h_{K_{(n)}}(u)}^{h_K(u)} q(s,u) \, \D s \, \mathcal{H}^{d-1}(\D u) \,\mathbb{P}_\varrho^{\otimes n}(\D(x_1, \dots, x_n)) \\
\begin{split}
&\ = \frac{2}{d \kappa_d} \int_{K^n} \int_{S^{d-1}} \int_{h_{K_{(n)}}(u)}^{h_K(u)} q(s,u) \, \mathbf{1}\{o \in K_{(n)}\} \, \D s \, \mathcal{H}^{d-1}(\D u) \, \mathbb{P}_\varrho^{\otimes n}(\D(x_1, \dots, x_n)) \\
&\ \quad + \frac{2}{d \kappa_d} \int_{K^n} \int_{S^{d-1}} \int_{h_{K_{(n)}}(u)}^{h_K(u)} q(s,u)\, \mathbf{1}\{o \notin K_{(n)}\} \, \D s \, \mathcal{H}^{d-1}(\D u) \, \mathbb{P}_\varrho^{\otimes n}(\D(x_1, \dots, x_n)).
\end{split}
\end{align*}
In the next step, we demonstrate that the second summand in the above formula is 
negligible. To show this, we need the following lemma. 

\begin{lemma}
\label{lemNotinKn}
There exists a constant $\gamma_1>0$, depending only on $K$ and $\varrho$, such that 
$$\mathbb{P}_{\varrho}\left(o \notin K_{(n)}\right) \leq 2^d(1 - \gamma_1)^n.$$
\end{lemma}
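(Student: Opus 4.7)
The plan is to use the $2^d$ coordinate corners $C_1,\ldots,C_{2^d}$ of $\R^d$ (with respect to an orthonormal basis whose origin coincides with $o$) and reduce the event $\{o\notin K_{(n)}\}$ via a union bound to the event that some corner contains none of the sample points. The key deterministic claim is: if each $C_j$ contains at least one of the points $x_1,\ldots,x_n$, then $o\in K_{(n)}$. Granting this claim, the contrapositive together with the union bound yields
\[
\PP_{\varrho}(o\notin K_{(n)})\leq \sum_{j=1}^{2^d}\PP_{\varrho}(X_n\cap C_j=\emptyset) = \sum_{j=1}^{2^d}(1-p_j)^n,
\]
where $p_j:=\PP_{\varrho}(K\cap C_j)$, so it remains to bound each $p_j$ from below by a positive constant depending only on $K$ and $\varrho$.

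To prove the deterministic claim, I would argue by contradiction. If $o\notin[x_{i_1},\ldots,x_{i_{2^d}}]$ for some selection $x_{i_j}\in C_j$, then by strict separation of a point from a compact convex set not containing it there exists $v\in\R^d\setminus\{0\}$ with $\langle v,x_{i_j}\rangle>0$ for every $j$. Pick the corner $C_{j^*}$ whose sign pattern is opposite to that of $v$ (negative coordinate where $v_i>0$, positive where $v_i<0$, arbitrary where $v_i=0$). Then every $x\in C_{j^*}$ satisfies $\langle v,x\rangle\leq 0$, which contradicts $\langle v,x_{i_{j^*}}\rangle>0$.

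For the positivity of $p_j$, I would exploit $o\in\inti K$ together with the assumption that $\varrho$ is positive and continuous at each boundary point. Choose any direction $u$ in the interior of the cone $C_j$; since $o\in\inti K$, the ray $\{tu:t\geq 0\}$ remains in $\inti C_j$ for $t>0$ and exits $K$ at some $z_j\in\partial K\cap\inti C_j$. By continuity, $\varrho\geq\varrho(z_j)/2>0$ on an open neighbourhood of $z_j$, which may be shrunk to lie inside $\inti C_j$; intersecting it with $K$ yields a set of positive $d$-dimensional Lebesgue measure, because $z_j\in\partial K$ and $K$ has nonempty interior. Hence $p_j>0$, and setting $\gamma_1:=\min_j p_j$ completes the argument. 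The only conceptual step is the deterministic separation argument; the probabilistic reduction and the positivity of $p_j$ are routine.
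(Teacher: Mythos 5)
Your proof is correct and follows essentially the same approach as the paper: reduce $\{o \notin K_{(n)}\}$ via a strict-separation argument to the event that some coordinate corner misses all sample points, then apply a union bound over the $2^d$ corners. The paper simply asserts that the minimum probability content $\gamma_1$ of the corners is positive, whereas you supply the justification (locating a boundary point of $K$ in the interior of each corner, where $\varrho$ is positive and continuous), which is indeed the right thing to do since $\varrho$ is only assumed positive near $\partial K$ and not in all of $K$.
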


\begin{proof}
Let an orthonormal basis be fixed in $\R^d$ and let $\Theta_i$, $i=1,\ldots, 2^d$, be the corresponding coordinate corners. 
If $o\not\in K_{(n)}$, then the random points $X_n$ are strictly separated from $o$ by a hyperplane $H$. 
Hence there exists a coordinate corner, say $\Theta_j$, which is strictly separated from $X_n$ by $H$. 
Let the minimum probability
content of the coordinate corners be denoted by $\gamma_1>0$. Then
clearly 
$$\mathbb{P}_{\varrho}\left(o \notin K_{(n)}\right) \leq 2^d(1 - \gamma_1)^n.$$
\end{proof}
Since $h_{K_{(n)}}(u) \geq - h_K(-u)$ and $q$ is locally integrable, we obtain
\begin{align*}
& \frac{2}{d \kappa_d} \int_{K^n} \int_{S^{d-1}} \int_{h_{K_{(n)}}(u)}^{h_K(u)} q(s,u) \, \mathbf{1}\{o \notin K_{(n)}\} \, \D s \, \HH^{d-1}(\D u) \,\mathbb{P}_\varrho^{\otimes n}(\D(x_1, \dots, x_n)) \\
& \ \leq \frac{2}{d \kappa_d}\, \mathbb{P}_{\varrho}\left(o \notin K_{(n)}\right) \int_{S^{d-1}} \int_{-h_K(-u)}^{h_K(u)} q(s,u) \,  \D s \, \mathcal{H}^{d-1}(\D u) \\
& \ = O(e^{-\gamma_1 n}).
\end{align*}
Thus, in what follows, we will neglect the term that corresponds to the 
event that $o\not\in K_{(n)}$. 

Since $\varrho$ is positive and continuous at each boundary point of $K$, compactness arguments show that $\varrho$ is bounded from above and from below by positive constants in a suitable neighbourhood of $\partial K$.
Hence, choose $\varepsilon_0 >0$ such that $\varrho$ is positive on the $\varepsilon_0$-neighbourhood $U$ of $\partial K$.
Now define the positive constant $c_0 := \inf_{x \in U} \varrho(x)$.
Let $\gamma_2 := (\frac{3d}{c_0 \kappa_{d-1}})^{\frac{1}{d}}$ and let $n_0\in\N$ be so large that for all $n>n_0$ the following conditions are satisfied:
\begin{equation} 
\label{n_0}
\begin{array}{ll}   
\text{a)} & r(K) \geq \gamma_2\left(\frac{\ln n}{n}\right)^{\frac{1}{d}}, \\
\text{b)} &  \varrho \geq c_0 \text{ in } C\left(u, h_K(u) -  
\gamma_2\left(\frac{\ln n}{n}\right)^{\frac{1}{d}} \right) \text{ for all } u\in S^{d-1}, \\
\text{c)} & \left(\frac{3(d+2)}{2c_0 \kappa_{d-1} r(K)^{\frac{d-1}{2}}} \frac{\ln n}{n}\right)^{\frac{2}{d+1}} \leq  \gamma_2\left(\frac{\ln n}{n}\right)^{\frac{1}{d}}.
\end{array}  
\end{equation}

From now on we assume that $n > n_0$. 
Using Fubini's theorem and Lemma \ref{lemNotinKn}, we obtain that
\begin{align*}
&\frac{2}{d \kappa_d} \int_{K^n} \int_{S^{d-1}} \int_{h_{K_{(n)}}(u)}^{h_K(u)} q(s,u) \, \mathbf{1}\{o \in K_{(n)}\} \, \D s \, \mathcal{H}^{d-1}(\D u) \, \mathbb{P}_\varrho^{\otimes n}(\D(x_1, \dots, x_n)) \\
&\ = \frac{2}{d \kappa_d} \int_{S^{d-1}} \int_0^{h_K(u)} q(s,u) \\
&\ \quad \times \int_{K^n} \mathbf{1}\{h_{K_{(n)}}(u) < s\}\, \mathbf{1}\{o \in K_{(n)}\} \, \mathbb{P}_\varrho^{\otimes n}(\D(x_1, \dots, x_n)) \, \D s \, \mathcal{H}^{d-1}(\D u) \displaybreak[0] \\
&\ = \frac{2}{d \kappa_d} \int_{S^{d-1}} \int_0^{h_K(u)} q(s,u) \\
&\ \quad \times \int_{K^n} \mathbf{1}\{X_n \subset K\backslash C\left(u,s\right)\} \, (1-\mathbf{1}\{o \notin K_{(n)}\}) \, \mathbb{P}_\varrho^{\otimes n}(\D(x_1, \dots, x_n)) \, \D s\, \mathcal{H}^{d-1}(\D u) \\
&\ = \frac{2}{d \kappa_d} \int_{S^{d-1}} \int_0^{h_K(u)} q(s,u) \left(1 - \mathbb{P}_\varrho\left(x_1 \in C\left(u,s\right)\right)\right)^n  \D s \, \mathcal{H}^{d-1}(\D u) + O(e^{-\gamma_1 n}).
\end{align*}
Decomposing the inner integral, we get
\begin{align*}
&\frac{2}{d \kappa_d} \int_{S^{d-1}} \int_0^{h_K(u)} q(s,u)\left(1 - \mathbb{P}_\varrho\left(x_1 \in C\left(u,s\right)\right)\right)^n \D s \,  \mathcal{H}^{d-1}(\D u) \\
&\ = \frac{2}{d \kappa_d} \int_{S^{d-1}} \int_0^{h_K(u)-\gamma_2\left(\frac{\ln n}{n}\right)^{\frac{1}{d}}} q(s,u)\left(1 - \mathbb{P}_\varrho\left(x_1 \in C\left(u,s\right)\right)\right)^n \D s \, \mathcal{H}^{d-1}(\D u) \\ 
&\ \quad + \frac{2}{d \kappa_d} \int_{S^{d-1}} \int_{h_K(u)-\gamma_2\left(\frac{\ln n}{n}\right)^{\frac{1}{d}}}^{h_K(u)} q(s,u)\left(1 - \mathbb{P}_\varrho\left(x_1 \in C\left(u,s\right)\right)\right)^n \D s \, \mathcal{H}^{d-1}(\D u).
\end{align*}
We will demonstrate that the first summand is negligible. In order to show this, 
we first estimate the integrand. From (\ref{n_0}a), (\ref{n_0}b), and the choice 
of $\gamma_2$, it follows that 
\begin{align}
\label{eq:x_in_cap}
&\mathbb{P}_\varrho\left(x_1 \in C\left(u,h_K(u) -\gamma_2\left(\frac{\ln n}{n}\right)^{\frac{1}{d}}\right)\right) \\
&\ = \int_{C\left(u,h_K(u) -\gamma_2\left(\frac{\ln n}{n}\right)^{\frac{1}{d}}\right)} \varrho\left(x\right)\, \mathcal{H}^d(\D x) \notag \\
&\ \geq c_0 V\left(C\left(u,h_K(u) -\gamma_2\left(\frac{\ln n}{n}\right)^{\frac{1}{d}}\right)\right) \notag\\
&\ \geq c_0  \frac{1}{d} \kappa_{d-1} \gamma_2^d \, \frac{\ln n}{n} \notag\\
&\ = \frac{3\ln n}{n}. \notag
\end{align}
This inequality also holds for larger caps $C(u,s)$ with $s \in [0, h_K(u)- \gamma_2\left(\frac{\ln n}{n}\right)^{\frac{1}{d}}]$.
Hence, using the fact that 
$\left(1-\frac{3\ln n}{n}\right)^n\leq e^{-3\ln n}=n^{-3}$, we obtain
\begin{align*} 
&\frac{2}{d \kappa_d} \int_{S^{d-1}} \int_0^{h_K(u)-\gamma_2\left(\frac{\ln n}{n}\right)^{\frac{1}{d}}} q(s,u)\left(1 - \mathbb{P}_\varrho\left(x_1 \in C\left(u,s\right)\right)\right)^n \D s \, \mathcal{H}^{d-1}(\D u) \\
&\ \leq \frac{2}{d \kappa_d} \int_{S^{d-1}} \int_0^{h_K(u)-\gamma_2\left(\frac{\ln n}{n}\right)^{\frac{1}{d}}} q(s,u)\left(1-\frac{3 \ln n}{n}\right)^n \D s \, \mathcal{H}^{d-1}(\D u) \\
&\ \leq \frac{2}{d \kappa_d} \int_{S^{d-1}} \int_0^{h_K(u)-\gamma_2\left(\frac{\ln n}{n}\right)^{\frac{1}{d}}} q(s,u) \, n^{-3}\, \D s \, \mathcal{H}^{d-1}(\D u)\\
&\ \ll n^{-3}.
\end{align*}
Let $\widetilde{C}\left(u,t\right) := C\left(u, h_K(u)-t \right)$ and substitute $s = h_K(u) - t$.
Then
\begin{align*}
&\frac{2}{d \kappa_d} \int_{S^{d-1}} \int_{h_K(u)-\gamma_2\left(\frac{\ln n}{n}\right)^{\frac{1}{d}}}^{h_K(u)} q(s,u)\left(1 - \mathbb{P}_\varrho\left(x_1 \in C\left(u,s\right)\right)\right)^n \D s \, \mathcal{H}^{d-1}(\D u) \\
&\ = \frac{2}{d \kappa_d} \int_{S^{d-1}} \int_0^{\gamma_2\left(\frac{\ln n}{n}\right)^{\frac{1}{d}}} q(h_K(u)-t,u) \left(1 - \mathbb{P}_\varrho\left(x_1 \in \widetilde{C}\left(u,t\right)\right)\right)^n \D t \, \mathcal{H}^{d-1}(\D u).
\end{align*}
Decomposing the above integral, we get
\begin{align*}
\begin{split}
&\frac{2}{d \kappa_d} \int_{S^{d-1}} \int_0^{\gamma_2\left(\frac{\ln n}{n}\right)^{\frac{1}{d}}} q(h_K(u),u) \left(1 - \mathbb{P}_\varrho\left(x_1 \in \widetilde{C}\left(u,t\right)\right)\right)^n \D t \, \mathcal{H}^{d-1}(\D u) \\
&\ + \frac{2}{d \kappa_d} \int_{S^{d-1}} \int_0^{\gamma_2\left(\frac{\ln n}{n}\right)^{\frac{1}{d}}} \left\{ q(h_K(u)-t,u) - q(h_K(u),u) \right\}  \\
&\ \quad \times \left(1 - \mathbb{P}_\varrho\left(x_1 \in \widetilde{C}\left(u,t\right)\right)\right)^n \D t \, \mathcal{H}^{d-1}(\D u).
\end{split}
\end{align*}
We are going to show that the second summand is again negligible. 
Let $\varepsilon>0$ be fixed.
Since $q$ is continuous at each point of $D_K$, a compactness argument shows that  if $n$ is sufficiently large then $| q(h_K(u)-t,u) - q(h_K(u),u) | \leq \varepsilon$ for all $t \in \left[0, \gamma_2\left(\frac{\ln n}{n}\right)^{\frac{1}{d}}\right]$ and for all $u\in S^{d-1}$.
Hence, if $n$ is sufficiently large, then
\begin{align*}
\begin{split}
&\frac{2}{d \kappa_d} \int_{S^{d-1}} \int_0^{\gamma_2\left(\frac{\ln n}{n}\right)^{\frac{1}{d}}} \left| q(h_K(u)-t,u) - q(h_K(u),u) \right| \\
& \ \quad \times \left(1 - \mathbb{P}_\varrho\left(x_1 \in \widetilde{C}\left(u,t\right)\right)\right)^n \D t \, \mathcal{H}^{d-1}(\D u)  
\end{split} \\
&\ \leq \varepsilon\, \frac{2}{d \kappa_d} \int_{S^{d-1}} \int_0^{\gamma_2\left(\frac{\ln n}{n}\right)^{\frac{1}{d}}}  \left(1 - \mathbb{P}_\varrho\left(x_1 \in \widetilde{C}\left(u,t\right)\right)\right)^n \D t \, \mathcal{H}^{d-1}(\D u).
\end{align*}
It follows from (\ref{n_0}a) and (\ref{n_0}b) that (if $n$ is sufficiently large)
\begin{align}\label{eq:pi-rho-lower}
&\mathbb{P}_\varrho\left(x_1 \in \widetilde{C}\left(u,t\right)\right)
 = \int_{ C\left(u,h_K(u)-t\right)} \varrho(x)\, \mathcal{H}^d(\D x) \\
&\ \geq c_0 V\left( C\left(u,h_K(u)-t\right)\right)  
\geq  \frac{2c_0 \kappa_{d-1} r(K)^{\frac{d-1}{2}}  t^{\frac{d+1}{2}}}{d+1}.\notag
\end{align} 

For any fixed $u\in S^{d-1}$, Lemma~\ref{gamma} with $\beta=0$ and (\ref{n_0}c) imply 
\begin{align}
&\int_0^{\gamma_2\left(\frac{\ln n}{n}\right)^{\frac{1}{d}}} \left(1 - \mathbb{P}_\varrho\left(x_1 \in \widetilde{C}\left(u,t\right)\right)\right)^n \D t \notag \\
&\ \leq \int_0^{\gamma_2\left(\frac{\ln n}{n}\right)^{\frac{1}{d}}} \left(1- \frac{2c_0 \kappa_{d-1} r(K)^{\frac{d-1}{2}}}{d+1} \, t^{\frac{d+1}{2}}\right)^n \D t \notag \\
&\ \ll \frac{2^{\frac{d-1}{d+1}}}{(d+1)^{\frac{d-1}{d+1}}c_0^{\frac{2}{d+1}} \kappa_{d-1}^{\frac{2}{d+1}} r(K)^{\frac{d-1}{d+1}}} \Gamma\left(\frac{2}{d+1}\right) n^{-\frac{2}{d+1}}.
\label{abschIntegral}
\end{align}
Therefore
\begin{align*}
\begin{split}
&\frac{2}{d \kappa_d} \int_{S^{d-1}} \int_0^{\gamma_2\left(\frac{\ln n}{n}\right)^{\frac{1}{d}}} \left| q(h_K(u)-t,u) - q(h_K(u),u) \right| \\
& \ \quad \times \left(1 - \mathbb{P}_\varrho\left(x_1 \in \widetilde{C}\left(u,t\right)\right)\right)^n \D t \, \mathcal{H}^{d-1}(\D u) 
\end{split} \\
& \ \ll \varepsilon\, \frac{2}{d \kappa_d} \int_{S^{d-1}} \frac{2^{\frac{d-1}{d+1}}}{(d+1)^{\frac{d-1}{d+1}} c_0^{\frac{2}{d+1}} \kappa_{d-1}^{\frac{2}{d+1}} r(K)^{\frac{d-1}{d+1}}} \Gamma\left(\frac{2}{d+1}\right) n^{-\frac{2}{d+1}} \, \mathcal{H}^{d-1}(\D u) \\
& \ \ll \varepsilon \, n^{-\frac{2}{d+1}}.
\end{align*}

In summary, we have obtained that
\begin{align*}
&\E_{\varrho}\left({\frac{2}{d \kappa_d} \int_{S^{d-1}}{ \int_{h_{K_{(n)}}(u)}^{h_K(u)} q(s,u) \, \D s \, \mathcal{H}^{d-1}(\D u)}} \right) \\
&\ = \frac{2}{d \kappa_d} \int_{S^{d-1}} \int_0^{\gamma_2\left(\frac{\ln n}{n}\right)^{\frac{1}{d}}} q(h_K(u),u) \left(1 - \mathbb{P}_\varrho\left(x_1 \in \widetilde{C}\left(u,t\right)\right)\right)^n \D t \, \mathcal{H}^{d-1}(\D u) \\
& \ \quad + O\left(\varepsilon \, n^{-\frac{2}{d+1}}\right) + O\left(n^{-3}\right) + O(e^{-\gamma_1 n}).
\end{align*}

For $u \in S^{d-1}$, let 
$$ \Theta_n(u) := n^{\frac{2}{d+1}} \, \frac{2}{d \kappa_d} \, q(h_K(u),u) 
\int_0^{\gamma_2\left(\frac{\ln{n}}{n}\right)^{\frac{1}{d}}} \left(1 - \mathbb{P}_\varrho \left(x_1 \in \widetilde{C}\left(u,t\right)\right)\right)^n \D t. $$
It follows from \eqref{abschIntegral} that
\begin{align*}
\Theta_n(u) \ll 
\frac{2}{d \kappa_d} \, q(h_K(u),u) 
\frac{2^{\frac{d-1}{d+1}}}{(d+1)^{\frac{d-1}{d+1}} c_0^{\frac{2}{d+1}}\kappa_{d-1}^{\frac{2}{d+1}} 
r(K)^{\frac{d-1}{d+1}}} \Gamma\left(\frac{2}{d+1}\right).
\end{align*}
Therefore $\Theta_n(u)<\gamma_3$ for all $u\in S^{d-1}$ for some suitable constant $\gamma_3>0$. 
Furthermore, the Gaussian curvature $\kappa(x)$ is also bounded from above by $r(K)^{-(d-1)}$ for $\mathcal{H}^{d-1}$ almost all $x\in\partial K$. 
Thus, Lemma \ref{lemma2_62} and Lebesgue's dominated convergence theorem yield
\begin{align*}
& \lim_{n \rightarrow \infty} n^\frac{2}{d+1} \, \E_{\varrho} \left(\frac{2}{d \kappa_d} \int_{S^{d-1}} \int_{h_{K_{(n)}}(u)}^{h_K(u)} q(s,u) \, \D s \, \mathcal{H}^{d-1}(\D u)\right) \\
&\quad = \lim_{n \rightarrow \infty} \int_{S^{d-1}} \Theta_n(u)\, \mathcal{H}^{d-1}(\D u) \\
&\quad = \lim_{n \rightarrow \infty} \int_{\partial K} \Theta_n(\sigma_K(x)) \, \kappa(x) \, \mathcal{H}^{d-1}(\D x) \\
&\quad = \int_{\partial K} \lim_{n \rightarrow \infty} \Theta_n(\sigma_K(x)) \, \kappa(x) \, \mathcal{H}^{d-1}(\D x).
\end{align*}
It remains to calculate the limit
\begin{align*}
&  \lim_{n \rightarrow \infty} \Theta_n(\sigma_K(x))
=  \frac{2}{d \kappa_d} \, q(h_K(\sigma_K(x)),\sigma_K(x)) \\  
&\quad\quad\times\lim_{n \rightarrow \infty} n^{\frac{2}{d+1}}
\int_0^{\gamma_2\left(\frac{\ln{n}}{n}\right)^{\frac{1}{d}}} \left(1 - \mathbb{P}_\varrho\left(x_1 \in C\left(\sigma_K(x),h_K(\sigma_K(x))-t\right)\right)\right)^n \D t
\end{align*}
for $\mathcal{H}^{d-1}$ almost all $x \in \partial K$.  

We start with those normal boundary points where the Gaussian curvature is zero.
\begin{lemma}
Let $x\in\partial K$ be a normal boundary point of $K$ with $\kappa(x)=0$
and $u=\sigma_K(x)$. Then 
$$\lim_{n \rightarrow \infty}{\Theta_n(u)}=0.$$ 
\end{lemma}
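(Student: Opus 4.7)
The plan is to show that the integral
\[
I_n(u):=\int_0^{\gamma_2(\ln n/n)^{1/d}}\bigl(1-\mathbb{P}_\varrho(x_1\in\widetilde C(u,t))\bigr)^n\,\D t
\]
is $o(n^{-2/(d+1)})$; since $\Theta_n(u)$ equals this integral multiplied by $n^{2/(d+1)}$ and a bounded factor (as $q$ is continuous at the fixed point $(h_K(u),u)\in D_K$), this will force $\Theta_n(u)\to 0$. The driving idea is that $\kappa(x)=\det Q=0$ makes at least one eigenvalue of the generalized second fundamental form $Q$ vanish, rendering the local caps of $K$ at $x$ considerably fatter than the generic lower bound in \eqref{eq:pi-rho-lower}. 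I expect this improvement to be large enough to beat the $n^{-2/(d+1)}$ rate produced by Lemma~\ref{gamma}.

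First I would establish a flatness-induced cap estimate: for every $M>0$ there exists $\delta_0=\delta_0(M)>0$ with $V(\widetilde C(u,t))\geq M\,t^{(d+1)/2}$ for all $t\in(0,\delta_0)$. Placing $x$ at the origin with $u=e_d$, the body $K$ lies locally in $\{y_d\leq 0\}$ and $\partial K$ is the graph $y_d=-f(z)$ of a convex function with $f(z)=\tfrac{1}{2}Q(z)+o(\|z\|^2)$; for sufficiently small $t$ (using the rolling ball of $K$ to confine the cap to this neighbourhood) one has $V(\widetilde C(u,t))=\int_{\{f\leq t\}}(t-f(z))\,\D z$. Writing the eigenvalues of $Q$ as $\lambda_1\geq\cdots\geq\lambda_{d-1}\geq 0$, the hypothesis gives $\lambda_{d-1}=0$. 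For arbitrary $\eta>0$ the Taylor remainder provides a neighbourhood on which $f(z)\leq\tfrac{1}{2}Q_\eta(z)$ with $Q_\eta(z):=Q(z)+2\eta\|z\|^2$; restricting the domain of integration to the ellipsoid $\{Q_\eta\leq t\}$ (on which $f(z)\leq t/2$) yields
\begin{equation*}
V(\widetilde C(u,t))\geq \frac{t}{2}\,\mathcal{H}^{d-1}\bigl(\{z:Q_\eta(z)\leq t\}\bigr)=\frac{\kappa_{d-1}}{2\sqrt{\det Q_\eta}}\,t^{(d+1)/2}.
\end{equation*}
Since $\det Q_\eta=\prod_i(\lambda_i+2\eta)\to 0$ as $\eta\to 0^+$, the prefactor can be taken above any prescribed $M$; after further shrinking $\delta_0$ so that $\widetilde C(u,\delta_0)\subset U$, we also get $\mathbb{P}_\varrho(x_1\in\widetilde C(u,t))\geq c_0 M\,t^{(d+1)/2}$ on $(0,\delta_0)$.

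I would then split $I_n(u)$ at $\delta_0$. On $[0,\delta_0]$ the same computation that produced \eqref{abschIntegral} (namely Lemma~\ref{gamma} with $\beta=0$ and $\omega=c_0 M$) yields the bound $\ll (c_0 M)^{-2/(d+1)}n^{-2/(d+1)}$. On $[\delta_0,\gamma_2(\ln n/n)^{1/d}]$, monotonicity of caps gives $\mathbb{P}_\varrho(x_1\in\widetilde C(u,t))\geq p_0:=\mathbb{P}_\varrho(x_1\in\widetilde C(u,\delta_0))>0$, so the contribution is at most $\gamma_2(\ln n/n)^{1/d}(1-p_0)^n=o(n^{-2/(d+1)})$. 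Combining, $\Theta_n(u)\leq C\,M^{-2/(d+1)}+o(1)$ with $C$ independent of $M$ and $n$; letting $n\to\infty$ first and then $M\to\infty$ yields $\lim_{n\to\infty}\Theta_n(u)=0$.

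The main obstacle is the flatness lower bound itself: converting ``$\det Q=0$'' quantitatively into a super-$t^{(d+1)/2}$ cap-volume estimate that is uniform enough to feed into the Laplace-type integral bound. The Taylor expansion in the definition of a normal boundary point is exactly the tool that makes this possible, but one must confine the geometric constructions to the neighbourhood where the remainder term is controlled and where $\widetilde C(u,t)$ lies in $U$.
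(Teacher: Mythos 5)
Your proposal is correct and takes essentially the same route as the paper: both arguments convert the flatness $\kappa(x)=0$ into a cap-volume lower bound $V(\widetilde C(u,t))\ge M\,t^{\frac{d+1}{2}}$ with an arbitrarily large constant $M$ for small $t$, and then feed this into Lemma~\ref{gamma} (with $\beta=0$) to get $\limsup_{n\to\infty}\Theta_n(u)\ll M^{-\frac{2}{d+1}}$, which forces the limit to vanish. The only cosmetic differences are how that cap bound is produced -- you integrate $t-f$ over the sublevel ellipsoid of $Q+2\eta\|\cdot\|^2$ and let $\eta\to 0^+$, whereas the paper lower-bounds the section area via the vanishing principal curvature together with the rolling ball -- and your identity $V(\widetilde C(u,t))=\int_{\{f\le t\}}(t-f(z))\,\D z$ should strictly be the inequality ``$\ge$'' restricted to the local graph neighbourhood, which is all your argument actually uses.
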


\begin{proof}
It is sufficient to prove that for any given $\varepsilon>0$,
\begin{equation}\label{kappa=zero}
n^{\frac{2}{d+1}} \int_0^{\gamma_2\left(\frac{\ln{n}}{n}\right)^{\frac{1}{d}}} \left(1 - \mathbb{P}_\varrho \left(x_1 \in \widetilde{C}\left(u,t\right)\right)\right)^n \D t
\ll\varepsilon
\end{equation}
if $n$ is large enough. 
Since $\varrho$ is positive in a neighbourhood of $x$ (see (\ref{n_0}b))
$$\mathbb{P}_\varrho
\left(x_1 \in \widetilde{C}\left(u,t\right)\right)=
\int_{\widetilde{C}\left(u,t\right)}
\varrho(y) \, \HH^d(\D y)\gg V\left (\widetilde{C}\left(u,t\right )\right ),$$
for sufficiently large $n$ and $t\in 
\left ( 0,\gamma_2\left(\frac{\ln{n}}{n}\right)^{\frac{1}{d}}\right )$.

By the assumption that $\kappa(x)=0$,
one principal curvature of $\partial K$ at $x$ is zero, and hence, in particular, less than
$\varepsilon^{d+1}r(K)^{d-2}$. Thus, 
$$\HH^{d-1}(K\cap H(u, h_K(u)-t))\gg \sqrt{t\varepsilon^{-(d+1)}r(K)^{-(d-2)}}
\cdot\sqrt{tr(K)}^{d-2}.$$
Therefore
$$V(\widetilde{C}\left(u,t\right))\gg \varepsilon^{-\frac{d+1}{2}}t^{\frac{d+1}{2}}.$$
Now, Lemma~\ref{gamma} with $\beta=0$ readily implies (\ref{kappa=zero}). 
\end{proof}

Next, we are going to consider the case where $x\in\partial K$ is a normal
boundary point with $\kappa(x)>0$. 
Set $u=\sigma_K(x)$ for brevity of notation. 

Let $Q$ denote the second fundamental form of $\partial K$ as a function in the orthogonal complement $u^\perp$ of $u$ in $\R^d$. 
Let 
$$E=\{z\in u^\perp: Q(z)\leq 1\}$$
be the indicatrix of $\partial K$ at $x$.
It is well-known that if the orthonormal basis vectors $v_1, \ldots, v_{d-1}$ in $u^\perp$ are aligned with the principal directions of curvature of $\partial K$ at $x$, then $Q(z)$ can be written as
$$Q(z)=\sum_{i=1}^{d-1}k_i(x)z_i^2,$$
where the quantities $k_i=k_i(x)$, $i=1,\ldots, d-1$, are the (generalized) principal curvatures of $\partial K$ at $x$ with $z_i \in \R$ and where $z=z_1v_1+\cdots + z_{d-1}v_{d-1}$. 
There is a nondecreasing function $\mu:(0,\infty)\to\R$ with $\lim_{r\to 0^+}\mu(r)=1$ such that
\begin{equation}\label{E}
\frac{\mu(r)^{-1}}{\sqrt{2r}}\,(\widetilde{K}(u,r)+ru-x)\subset E\subset 
\frac{\mu(r)}{\sqrt{2r}}\,(\widetilde{K}(u,r)+ru-x),
\end{equation}
where $\widetilde{K}(u,r)=K\cap H(u, h_K(u)-r)$. From (\ref{E}) and Fubini's 
theorem, it follows that 
$$V(\widetilde{C}(u,r)) = V(K\cap H^+(u,h_K(u)-r))=\mu_1(r)\frac{(2r)^{\frac{d+1}{2}}}{d+1}\kappa_{d-1}\kappa (x)^{-\frac{1}{2}},
$$ 
where $\mu_1:(0,\infty)\to \R$ satisfies $\lim_{r\to 0^+}\mu_1(r)=1$.  
Now, by the continuity of $\varrho$ at $x$ and using Lemma~\ref{gamma} with
$\beta=0$, we obtain that
\begin{align*}
&\lim_{n \rightarrow \infty} \Theta_n(\sigma_K(x)) \\
&\quad = \frac{2}{d \kappa_d} \, q(h_K(\sigma_K(x)),\sigma_K(x)) \\
&\quad \quad\times   
\lim_{n \rightarrow \infty} n^{\frac{2}{d+1}}
\int_0^{\gamma_2\left(\frac{\ln{n}}{n}\right)^{\frac{1}{d}}}
\left(1 - \mathbb{P}_\varrho\left(x_1 \in C\left(\sigma_K(x),h_K(\sigma_K(x))-t\right)\right)\right)^n \D t \\
&\quad = \frac{2}{d \kappa_d} \, q(h_K(\sigma_K(x)),\sigma_K(x))\\
&\quad \quad\times \lim_{n \rightarrow \infty} n^{\frac{2}{d+1}}
\int_0^{\gamma_2\left(\frac{\ln{n}}{n}\right)^{\frac{1}{d}}} 
\left(1 - \mu_1(t)\frac{(2t)^{\frac{d+1}{2}}}{d+1}\kappa_{d-1}\kappa(x)^{-\frac{1}{2}}\varrho(x)\right)^n \D t\\
&\quad = \frac{2\Gamma\left( \frac{2}{d+1}\right )}
{d \kappa_d (d+1)^{\frac{d-1}{d+1}}\kappa_{d-1}^{\frac{2}{d+1}}} \,
q(h_K(\sigma_K(x)),\sigma_K(x))\kappa(x)^{\frac{1}{d+1}}\varrho(x)^{-\frac{2}{d+1}},
\end{align*}
and thus,
\begin{align*}
&\lim_{n\to\infty}n^{\frac{2}{d+1}}\,\E_{\varrho}\left(\frac{2}{d\kappa_d}\int_{S^{d-1}}\int_{h_{K_{(n)}(u)}}^{h_K(u)} q(s,u) \,\D s \,\HH^{d-1}(\D u)\right)\\
&\quad = \int_{\partial K}\lim_{n\to\infty} \Theta_n(\sigma_K(x))\kappa(x)\,\HH^{d-1}(\D x)\\
&\quad = \frac{2\,\Gamma\left( \frac{2}{d+1}\right )}
{d \kappa_d (d+1)^{\frac{d-1}{d+1}}\kappa_{d-1}^{\frac{2}{d+1}}} 
\int_{\partial K} q(h_K(\sigma_K(x)),\sigma_K(x)) \,
\kappa(x)^{\frac{d+2}{d+1}}\,\varrho(x)^{-\frac{2}{d+1}}\,\HH^{d-1}(\D x), 
\end{align*}
which finishes the proof of Theorem~\ref{Satz2_q}.
\end{proof}

\subsection{Upper bound on the variance: proof of Theorem~\ref{thm:var-up}}\label{var-up}
In this section, we prove the asymptotic upper bound in Theorem~\ref{thm:var-up}.
In fact, we prove the following theorem which provides an asymptotic upper bound on the variance of the weighted mean width $W_q(K_{(n)})$ and which directly implies Theorem~\ref{thm:var-up}.

\begin{theorem}\label{thm:var-up_q}
With the hypotheses and notation of Theorem~\ref{Satz2_q}, it holds that
\[ \Var_{\varrho}(W_q(K_{(n)})) \ll n^{-\frac{d+3}{d+1}}, \]
where the implied constant depends only on $K$, $q$ and $\varrho$.  
\end{theorem}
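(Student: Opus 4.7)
The plan is to apply the Efron--Stein inequality to the symmetric function $f_n(x_1,\dots,x_n) := W_q([x_1,\dots,x_n])$. Using its leave-one-out form with $f_{n-1}$ as the comparison function, symmetry yields
\[
\Var_\varrho(W_q(K_{(n)})) \leq n\,\E_\varrho\!\left[(W_q(K_{(n)}) - W_q(K_{(n-1)}))^2\right],
\]
where $K_{(n-1)} = [X_1,\dots,X_{n-1}]$, so it suffices to prove
$\E_\varrho[(W_q(K_{(n)}) - W_q(K_{(n-1)}))^2] \ll n^{-2(d+2)/(d+1)}$.

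Writing the nonnegative increment as
\[
\Delta := W_q(K_{(n)}) - W_q(K_{(n-1)}) = \frac{2}{d\kappa_d}\int_{S^{d-1}}\int_{h_{K_{(n-1)}}(u)}^{h_{K_{(n)}}(u)} q(s,u)\,\D s\,\HH^{d-1}(\D u),
\]
one notes that $\Delta = 0$ unless $X_n\notin K_{(n-1)}$, and that $u\in S^{d-1}$ contributes to the integral only if $\langle u, X_n\rangle > h_{K_{(n-1)}}(u)$. As in the proof of Theorem~\ref{Satz2_q}, conditioning on $\{o\in K_{(n-1)}\}$ costs only $O(e^{-\gamma_1 n})$ by Lemma~\ref{lemNotinKn}, and restricting to $s\in[h_K(u)-\tau_n, h_K(u)]$ with $\tau_n := \gamma_2(\ln n/n)^{1/d}$ costs only $O(n^{-3})$ via \eqref{eq:x_in_cap}. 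Substituting $s_i = h_K(u_i)-t_i$, expanding $\Delta^2$ as a double integral over pairs of directions $(u_1,u_2)\in S^{d-1}\times S^{d-1}$, taking expectation, and using independence of $X_n$ from $X_1,\dots,X_{n-1}$ together with the cap estimate~\eqref{eq:pi-rho-lower}, one arrives at a bound of the form
\[
\E[\Delta^2]\ll \int_{(S^{d-1})^2}\!\int_0^{\tau_n}\!\int_0^{\tau_n} v_n\,(1-p_n)^{n-1}\, \D t_1\, \D t_2\,\HH^{d-1}(\D u_1)\HH^{d-1}(\D u_2),
\]
where $p_n = p_n(u_1,u_2,t_1,t_2)$ is the $\PP_\varrho$-content of the union $\widetilde{C}(u_1,t_1)\cup\widetilde{C}(u_2,t_2)$ and $v_n = v_n(u_1,u_2,t_1,t_2)$ is the $\PP_\varrho$-content of the intersection $\widetilde{C}(u_1,t_1)\cap\widetilde{C}(u_2,t_2)$; the intersection arises from the joint event that $X_n$ lies above both levels simultaneously, while the union governs the event that none of $X_1,\dots,X_{n-1}$ lies in either cap.

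The key step is the evaluation of this double integral. When $|u_1-u_2| \gtrsim \sqrt{\max(t_1,t_2)}$ the two caps are disjoint and $v_n = 0$, so that range contributes nothing. On the complementary near-diagonal region, $v_n \asymp \max(t_1,t_2)^{(d+1)/2} \asymp p_n$; changing to polar coordinates around $u_1$, using that the angular neighbourhood of radius $\sqrt{t}$ on $S^{d-1}$ has measure $\asymp t^{(d-1)/2}$, and collapsing the two $t$-integrations into a single one (which gains a further factor of $t$), one reduces the integrand to $t^{(d+1)/2+(d-1)/2+1} = t^{d+1}$ times $(1-c\,t^{(d+1)/2})^{n-1}$. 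An application of Lemma~\ref{gamma} with $\beta = d+1$ then gives exactly the rate $n^{-2(d+2)/(d+1)}$. Finally Lemma~\ref{lemma2_62} transfers the remaining outer $\HH^{d-1}$ integration from $S^{d-1}$ to $\partial K$ against the weight $\kappa$, which is bounded ($\leq r(K)^{-(d-1)}$) because a ball rolls freely in $K$. The main obstacle I anticipate is the careful geometric bookkeeping required to obtain uniform-in-$(u_1,u_2)$ estimates for $v_n$ and $p_n$ on the near-diagonal together with the correct overlap scale; the rolling-ball hypothesis is essential here, providing both the cap lower bound~\eqref{eq:pi-rho-lower} and the uniform upper bound on $\kappa$ needed to pass from $S^{d-1}$-integrals to $\partial K$-integrals.
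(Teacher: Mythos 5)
Your overall strategy (Efron--Stein, reduction to shallow caps, the angular ``overlap scale'' $\sqrt{\max(t_1,t_2)}$ for the set of interacting directions, and a final application of Lemma~\ref{gamma}) is the same as the paper's, and most of the reductions you sketch (the $O(e^{-\gamma_1 n})$ and $O(n^{-3})$ error terms, the collapse of the two height integrations, the bound $\ll t^{(d-1)/2}$ for the measure of interacting directions, which the paper quotes from \cite{BoFoReVi2009}) are sound. The genuine gap is the key quantitative step ``$v_n \asymp \max(t_1,t_2)^{(d+1)/2} \asymp p_n$''. The upper bound $v_n \ll \max(t_1,t_2)^{(d+1)/2}$ is a cap-content \emph{upper} bound, and it does not follow from the hypotheses of Theorem~\ref{Satz2_q}: a freely rolling ball bounds the curvature from above and hence gives only the cap-content \emph{lower} bound \eqref{eq:pi-rho-lower}; an upper bound of order $t^{(d+1)/2}$ would require a lower curvature bound (free sliding in a ball), which is not assumed here. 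Concretely, for $K$ the convex hull of two disjoint unit balls (which has a rolling ball), a cap of height $t$ centred at a point of the flat part of $\partial K$ has content of order $t$, not $t^{(d+1)/2}$; taking $u_1=u_2$ shows $v_n$ can be of order $t$ as well. Without the factor $t^{(d+1)/2}$ from $v_n$, your computation only yields $\E[\Delta^2]\ll n^{-\frac{d+3}{d+1}}$, hence $\Var \ll n^{-\frac{2}{d+1}}$ after multiplying by $n$ --- the order of the mean, not of the variance.

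The step can be repaired, and this is exactly what the paper does: bound $v_n\le p_n$ (trivially, intersection content is at most union content, or use the maximum $\mathbb{P}^+_\varrho$ of the two single-cap contents) and then apply the elementary inequality \eqref{AbschA},
\[
\alpha(1-\alpha)^n \le \frac{3}{n}\Bigl(1-\frac{2\alpha}{3}\Bigr)^n ,
\]
with $\alpha$ equal to this cap content. This trades the unavailable factor $\max(t_1,t_2)^{(d+1)/2}$ for a factor $3/n$, which precisely cancels the factor $n$ coming from Efron--Stein; after that only the rolling-ball \emph{lower} bound \eqref{eq:pi-rho-lower} is needed inside the exponential, and Lemma~\ref{gamma} with $\beta=\frac{d+1}{2}$ (exponent $\frac{d-1}{2}$ from the angular measure plus $1$ from the inner height integration) gives $n^{-\frac{d+3}{d+1}}$. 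Two minor remarks: your leave-one-out form of Efron--Stein is legitimate (the paper uses the $(n+1)$-point version, which is equivalent for the asymptotics), and the final transfer to $\partial K$ via Lemma~\ref{lemma2_62} is unnecessary, since after these bounds the integrand no longer depends on $u$ and one can simply integrate over $S^{d-1}$.
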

A lower bound of the same order can be obtained by the same arguments as in \cite{BoFoReVi2009}.

\begin{proof}
Our argument is similar to the one presented in B\"or\"oczky, Fodor, Reitzner and V\'\i gh \cite{BoFoReVi2009}. 
The main tool is the Efron-Stein jackknife inequality (cf. Reitzner \cite{Reitzner2003})
\begin{equation}\label{efron-stein}
 \Var_{\varrho} (W_q(K_{(n)})) \leq (n+1)\,\E_\varrho (W_q(K_{(n+1)})-W_q(K_{(n)}))^2.
\end{equation}
It follows from \eqref{efron-stein} and Fubini's theorem that
\begin{align*}
&\Var_{\varrho}(W_q(K_{(n)})) \\
\begin{split}
&\ \ll n \int_{K^{n+1}} \left( \int_{S^{d-1}} \int_{h_{K_{(n)}}(u)}^{h_{K_{(n+1)}}(u)} q(s,u) \, \D s \, \mathcal{H}^{d-1}(\D u) \right) \\
&\ \quad \times \left( \int_{S^{d-1}} \int_{h_{K_{(n)}}(v)}^{h_{K_{(n+1)}}(v)} q(t,v) \, \D t \, \mathcal{H}^{d-1}(\D v) \right) \, \mathbb{P}_\varrho^{\otimes (n+1)}(\D X_{n+1}) 
\end{split}\\
\begin{split}
&\ = n \int_{S^{d-1}} \int_{S^{d-1}} \int_0^{h_K(u)} \int_0^{h_K(v)} q(s,u)\, q(t,v) \int_{K^{n+1}} \ii\{h_{K_{(n)}}(u) \leq s \leq h_{K_{(n+1)}}(u)\} \\
&\ \quad \times \ii\{h_{K_{(n)}}(v) \leq t \leq h_{K_{(n+1)}}(v)\} \\ 
&\ \quad \times \ii\{o \in K_{(n)} \}\, \mathbb{P}_\varrho^{\otimes (n+1)}(\D X_{n+1})  \, \D t \,\D s \, \mathcal{H}^{d-1}(\D v) \, \mathcal{H}^{d-1}(\D u) + O(n e^{-\gamma_1 n})
\end{split} \\
\begin{split}
&\ = n \int_{S^{d-1}} \int_{S^{d-1}} \int_0^{h_K(u)} \int_0^{h_K(v)} q(s,u)\, q(t,v) \int_{K^{n+1}} \ii\{x_{n+1} \in C(u,s) \cap C(v,t)\} \\
&\ \quad \times \ii\{X_n \subset K \setminus( C(u,s) \cup C(v,t))\} \\
&\ \quad \times \ii\{o \in K_{(n)} \}\, \mathbb{P}_\varrho^{\otimes (n+1)}(\D X_{n+1})  \, \D t \, \D s \, \mathcal{H}^{d-1}(\D v) \, \mathcal{H}^{d-1}(\D u) + O(n e^{-\gamma_1 n})
\end{split} \\
\begin{split}
&\ = n \int_{S^{d-1}} \int_{S^{d-1}} \int_0^{h_K(u)} \int_0^{h_K(v)} q(s,u)\, q(t,v) \, \mathbb{P}_\varrho(x_{n+1} \in C(u,s) \cap C(v,t)) \\
&\ \quad \times (1- \mathbb{P}_\varrho(x_1 \in C(u,s) \cup C(v,t)))^n  \, \D t \, \D s \, \mathcal{H}^{d-1}(\D v) \, \mathcal{H}^{d-1}(\D u) + O(n e^{-\gamma_1 n}).
\end{split}\\
\end{align*}
Now, for $b \geq 0$, $0 \leq s \leq h_K(u)$ and $u \in S^{d-1}$ let
\[ \Sigma(u,s;b) = \{ v \in S^{d-1} : C(u,s) \cap \widetilde{C}(v,b) \neq \emptyset \}, \]
and for $v \in \Sigma(u,s;b)$ let
\[ \mathbb{P}_\varrho^+(u,s;v,b) = \max\{\mathbb{P}_\varrho(x_1 \in C(u,s)),\mathbb{P}_\varrho(x_1 \in \widetilde{C}(v,b))\}. \]

Let $\gamma_2$ be defined as on page 7.
By symmetry we may assume $s\leq t$.
Then substituting $b=h_K(v)-t$ and splitting the domain of integration of $s$, we obtain that
\begin{align*}
&\Var_{\varrho}(W_q(K_{(n)})) \\
\begin{split}
&\ \ll n \int_{S^{d-1}} \int_{S^{d-1}} \int_0^{h_K(u)} \int_s^{h_K(v)} q(s,u)\, q(t,v) \, \mathbb{P}_\varrho(x_{n+1} \in C(u,s) \cap C(v,t)) \\
&\ \quad \times (1- \mathbb{P}_\varrho(x_1 \in C(u,s) \cup C(v,t)))^n  \, \D t \, \D s \, \mathcal{H}^{d-1}(\D v) \, \mathcal{H}^{d-1}(\D u) + O(n e^{-\gamma_1 n})
\end{split} \\
\begin{split}
&\ \ll n \int_{S^{d-1}} \int_0^{h_K(u)} \int_0^{h_K(u)-s} \int_{\Sigma(u,s;b)} q(s,u)\, q(h_K(v)-b,v) \, \mathbb{P}_\varrho^+(u,s;v,b)  \\
&\ \quad \times (1- \mathbb{P}_\varrho^+(u,s;v,b))^n \, \mathcal{H}^{d-1}(\D v) \, \D b \, \D s \, \mathcal{H}^{d-1}(\D u) + O(n e^{-\gamma_1 n})
\end{split} \\
\begin{split}
&\ = n \int_{S^{d-1}} \int_0^{h_K(u)-\gamma_2(\frac{\ln n}{n})^{\frac{1}{d}}} \int_0^{h_K(u)-s} \int_{\Sigma(u,s;b)} q(s,u)\, q(h_K(v)-b,v) \, \mathbb{P}_\varrho^+(u,s;v,b)  \\
&\ \quad \times (1- \mathbb{P}_\varrho^+(u,s;v,b))^n \, \mathcal{H}^{d-1}(\D v) \, \D b \, \D s \, \mathcal{H}^{d-1}(\D u)
\end{split} \\
\begin{split}
&\ \ + n \int_{S^{d-1}} \int_{h_K(u)-\gamma_2(\frac{\ln n}{n})^{\frac{1}{d}}}^{h_K(u)} \int_0^{h_K(u)-s} \int_{\Sigma(u,s;b)} q(s,u)\, q(h_K(v)-b,v) \, \mathbb{P}_\varrho^+(u,s;v,b)  \\
&\ \quad \times (1- \mathbb{P}_\varrho^+(u,s;v,b))^n \, \mathcal{H}^{d-1}(\D v) \, \D b \, \D s \, \mathcal{H}^{d-1}(\D u) + O(n e^{-\gamma_1 n}).
\end{split}
\end{align*}

We continue the argument by estimating the first summand in the above integral.
To achieve this estimate we will use the following inequality.
If $\alpha \in [0,1]$, then 
\[ \frac{(1-\frac{2\alpha}{3})^n}{(1-\alpha)^n} \geq \left(1+ \frac{\alpha}{3}\right)^n \geq \frac{\alpha n}{3}, \]
which yields
\begin{equation}
\label{AbschA}
\alpha(1-\alpha)^n \leq \frac{3}{n}\left(1- \frac{2\alpha}{3}\right)^n.
\end{equation}

It follows from \eqref{AbschA} and \eqref{eq:x_in_cap} that for sufficiently large $n$
\begin{align*}
\begin{split}
& n \int_{S^{d-1}} \int_0^{h_K(u)-\gamma_2(\frac{\ln n}{n})^{\frac{1}{d}}} \int_0^{h_K(u)-s} \int_{\Sigma(u,s;b)} q(s,u)\, q(h_K(v)-b,v) \, \mathbb{P}_\varrho^+(u,s;v,b)  \\
& \ \quad \times \left (1- \mathbb{P}_\varrho^+(u,s;v,b)\right )^n \mathcal{H}^{d-1}(\D v) \, \D b \, \D s \, \mathcal{H}^{d-1}(\D u)
\end{split} \\
\begin{split}
& \ \leq n \int_{S^{d-1}} \int_0^{h_K(u)-\gamma_2(\frac{\ln n}{n})^{\frac{1}{d}}} \int_0^{h_K(u)-s} \int_{\Sigma(u,s;b)} q(s,u)\, q(h_K(v)-b,v) \\
& \ \quad \times \frac{3}{n}\left (1- \frac{2}{3}\mathbb{P}_\varrho^+(u,s;v,b)\right )^n \mathcal{H}^{d-1}(\D v) \, \D b \, \D s \, \mathcal{H}^{d-1}(\D u)
\end{split} \displaybreak[0] \\
\begin{split}
& \ \leq \int_{S^{d-1}} \int_0^{h_K(u)-\gamma_2(\frac{\ln n}{n})^{\frac{1}{d}}} \int_0^{h_K(u)-s} \int_{\Sigma(u,s;b)} q(s,u)\, q(h_K(v)-b,v) \\
& \ \quad \times 3\left (1- \frac{2}{3}\frac{3 \ln n}{n}\right )^n \mathcal{H}^{d-1}(\D v) \, \D b \, \D s \, \mathcal{H}^{d-1}(\D u)
\end{split} \\
& \ \ll n^{-2}.
\end{align*}

This implies, together with \eqref{AbschA} and the substitution $a=h_K(u)-s$, that
\begin{align*}
&\Var_{\varrho}(W_q(K_{(n)})) \\
\begin{split}
&\ \ll n \int_{S^{d-1}} \int_{h_K(u)-\gamma_2(\frac{\ln n}{n})^{\frac{1}{d}}}^{h_K(u)} \int_0^{h_K(u)-s} \int_{\Sigma(u,s;b)} q(s,u)\, q(h_K(v)-b,v) \, \mathbb{P}_\varrho^+(u,s;v,b)  \\
&\ \quad \times \left (1- \mathbb{P}_\varrho^+(u,s;v,b)\right )^n \mathcal{H}^{d-1}(\D v) \, \D b \, \D s \, \mathcal{H}^{d-1}(\D u) + O(n e^{-\gamma_1 n}) + O\left(n^{-2}\right)
\end{split}\\
\begin{split}
&\ \leq n \int_{S^{d-1}} \int_{h_K(u)-\gamma_2(\frac{\ln n}{n})^{\frac{1}{d}}}^{h_K(u)} \int_0^{h_K(u)-s} \int_{\Sigma(u,s;b)} q(s,u)\, q(h_K(v)-b,v) \\
&\ \quad \times \frac{3}{n}\left (1- \frac{2}{3}\mathbb{P}_\varrho^+(u,s;v,b)\right )^n \mathcal{H}^{d-1}(\D v) \, \D b \, \D s \, \mathcal{H}^{d-1}(\D u) + O(n e^{-\gamma_1 n}) + O\left(n^{-2}\right)
\end{split}\\
\begin{split}
&\ \ll \int_{S^{d-1}} \int_0^{\gamma_2(\frac{\ln n}{n})^{\frac{1}{d}}} \int_0^a \int_{\Sigma(u,h_K(u)-a;b)} q(h_K(u)-a,u)\, q(h_K(v)-b,v) \\
&\ \quad \times \left (1- \frac{2}{3}\,\mathbb{P}_\varrho^+(u,h_K(u)-a;v,b)\right )^n \mathcal{H}^{d-1}(\D v) \, \D b \, \D a \, \mathcal{H}^{d-1}(\D u) \\
&\ \quad + O(n e^{-\gamma_1 n}) + O\left(n^{-2}\right).
\end{split}
\end{align*}

By the continuity of $q$ at each point of $D_K$, we may assume that 
if $n$ is sufficiently large then $q(h_K(u) -a,u)$ is bounded for all $a \in [0,\gamma_2(\frac{\ln n}{n})^{\frac{1}{d}}]$ and for all $u \in S^{d-1}$.
From \cite{BoFoReVi2009} (cf. Proof of the upper bound in Theorem~1.2 on page 2291) it follows that the $(d-1)$-measure of $\Sigma(u,h_K(u)-a;b)$ is at most $\gamma_4 a^{\frac{d-1}{2}}$, where the constant $\gamma_4 > 0$ depends on $d$.
Thus, using \eqref{eq:pi-rho-lower} and Lemma~\ref{gamma} with $\beta = \frac{d+1}{2}$, we obtain that
\begin{align*}
&\Var_{\varrho}(W_q(K_{(n)})) \\
\begin{split}
& \ \ll \int_{S^{d-1}} \int_0^{\gamma_2(\frac{\ln n}{n})^{\frac{1}{d}}} \int_0^a \int_{\Sigma(u,h_K(u)-a;b)} \left (1- \frac{2}{3}\,\mathbb{P}_\varrho^+(u,h_K(u)-a;v,b)\right)^n \mathcal{H}^{d-1}(\D v)\\
&\ \quad \times \D b \, \D a \, \mathcal{H}^{d-1}(\D u) + O(n e^{-\gamma_1 n}) + O\left(n^{-2}\right)
\end{split} \\
& \ \ll \int_{S^{d-1}} \int_0^{\gamma_2(\frac{\ln n}{n})^{\frac{1}{d}}} 
\int_0^a a^{\frac{d-1}{2}} \left(1- \frac{4c_0 \kappa_{d-1} r(K)^{\frac{d-1}{2}}}{3(d+1)} \,a^{\frac{d+1}{2}}\right)^n \D b \\
&\ \quad \times \D a \, \mathcal{H}^{d-1}(\D u) + O(n e^{-\gamma_1 n}) + O\left(n^{-2}\right) \displaybreak[0] \\
& \ = \int_{S^{d-1}} \int_0^{\gamma_2(\frac{\ln n}{n})^{\frac{1}{d}}} a^{\frac{d+1}{2}} \left(1- \frac{4c_0 \kappa_{d-1} r(K)^{\frac{d-1}{2}}}{3(d+1)} \,a^{\frac{d+1}{2}}\right)^n \D a \, \mathcal{H}^{d-1}(\D u) \\
&\ \quad + O(n e^{-\gamma_1 n}) + O\left(n^{-2}\right) \displaybreak[0] \\
& \ \ll \int_{S^{d-1}} 
n^{-\frac{d+3}{d+1}} \, \mathcal{H}^{d-1}(\D u) + O(n e^{-\gamma_1 n}) + O\left(n^{-2}\right) \\
& \ \ll n^{-\frac{d+3}{d+1}}.
\end{align*}

This finishes the proof of the asymptotic upper bound in Theorem~\ref{thm:var-up_q}.
\end{proof} 

\subsection{The strong law of large numbers: proof of Theorem~\ref{thm:law-of-large-num}}
The upper bound on the variance implies a law of large numbers for $W(K_{(n)})$ as stated in Theorem~\ref{thm:law-of-large-num}. 

The same holds true for the upper bound on the variance of the weighted mean width $W_q(K_{(n)})$. 
Hence, we prove the following theorem of which Theorem~\ref{thm:law-of-large-num} is a special case.

\begin{theorem}\label{thm:law-of-large-num_q}
With the hypotheses and notation of Theorem~\ref{Satz2_q}, it holds that
\[ \begin{split}
& \lim_{n \rightarrow \infty}\left (W_q(K)-W_q(K_{(n)})\right ) \cdot n^\frac{2}{d+1} \\
&\quad = \frac{2\,c_d}{(d \kappa_d)^{\frac{d+3}{d+1}}} \int_{\partial K}{\kappa(x)^{\frac{d+2}{d+1}} \, q(h_K(\sigma_K(x)), \sigma_K(x))\, \varrho(x)^{-\frac{2}{d+1}}\, \mathcal{H}^{d-1}(\D x)}
\end{split} \]
with probability $1$.
\end{theorem}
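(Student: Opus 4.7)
The plan is to deduce the almost sure convergence from the asymptotic formula for the expectation (Theorem~\ref{Satz2_q}) together with the upper variance bound (Theorem~\ref{thm:var-up_q}), following the standard subsequence-plus-monotonicity argument as in \cite{BoFoReVi2009}. Throughout, write $Y_n := W_q(K) - W_q(K_{(n)})$, so that our goal is $n^{2/(d+1)} Y_n \to C$ almost surely, where $C$ is the constant on the right-hand side of the claimed limit.

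First I would verify that $Y_n$ is monotonically non-increasing in $n$ (a.s.). Since $K_{(n)} \subset K_{(n+1)} \subset K$ and $o \in \inti K$, once $o \in K_{(n)}$ all the support functions are nonnegative and satisfy $h_{K_{(n)}} \le h_{K_{(n+1)}} \le h_K$; using $q \geq 0$ in the definition of $W_q$ this gives $W_q(K_{(n)}) \le W_q(K_{(n+1)}) \le W_q(K)$, hence $Y_{n+1} \le Y_n$. By Lemma~\ref{lemNotinKn} and Borel--Cantelli, $o \in K_{(n)}$ eventually a.s., so the monotonicity holds from some random index onward.

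Next I would set $Z_n := n^{2/(d+1)} Y_n$. Theorem~\ref{Satz2_q} gives $\E_\varrho Z_n \to C$, and Theorem~\ref{thm:var-up_q} gives
\begin{equation*}
\Var_\varrho(Z_n) = n^{4/(d+1)} \Var_\varrho(W_q(K_{(n)})) \ll n^{4/(d+1) - (d+3)/(d+1)} = n^{-(d-1)/(d+1)}.
\end{equation*}
Chebyshev's inequality then yields $\PP_\varrho(|Z_n - \E_\varrho Z_n| > \varepsilon) \ll \varepsilon^{-2} n^{-(d-1)/(d+1)}$. Choosing a subsequence $n_k := \lceil k^\alpha \rceil$ with $\alpha > (d+1)/(d-1)$ makes $\sum_k n_k^{-(d-1)/(d+1)}$ finite, so by Borel--Cantelli $Z_{n_k} - \E_\varrho Z_{n_k} \to 0$ a.s., and hence $Z_{n_k} \to C$ almost surely.

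Finally, I would interpolate between subsequence values using the monotonicity of $Y_n$. For $n_k \le n \le n_{k+1}$ one has $Y_{n_{k+1}} \le Y_n \le Y_{n_k}$, and therefore
\begin{equation*}
\left(\frac{n_k}{n_{k+1}}\right)^{\frac{2}{d+1}} Z_{n_{k+1}} \le Z_n \le \left(\frac{n_{k+1}}{n_k}\right)^{\frac{2}{d+1}} Z_{n_k}.
\end{equation*}
Since $n_{k+1}/n_k \to 1$ for the polynomial choice of $n_k$, both the upper and lower bounds tend to $C$ almost surely, so $Z_n \to C$ almost surely, which is exactly the claim. No step is genuinely difficult here; the only thing to be careful about is the monotonicity of $W_q$, which depends on $q \geq 0$ and on $o$ being interior (both guaranteed by the hypotheses), and the correct choice of subsequence exponent $\alpha$ so that the Chebyshev tail is summable.
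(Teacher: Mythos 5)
Your argument is correct and follows essentially the same route as the paper, which also combines the expectation asymptotics of Theorem~\ref{Satz2_q}, Chebyshev's inequality with the variance bound of Theorem~\ref{thm:var-up_q}, and the monotonicity of $W_q(K)-W_q(K_{(n)})$, deferring the subsequence/Borel--Cantelli interpolation to \cite{BoFoReVi2009}. You have merely written out explicitly the standard finishing steps that the paper cites.
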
 

\begin{proof}
The proof follows essentially the same argument as that of Theorem~1.3 in \cite{BoFoReVi2009}.
Here we use the more general variance estimate provided in Theorem~\ref{thm:var-up_q}.
Then for $\varepsilon > 0$, we obtain from Chebyshev's inequality that
\begin{align*}  
& \PP_\varrho\left(\left| W_q(K)-W_q(K_{(n)}) - 
\E_{\varrho}\left ({W_q(K)-W_q(K_{(n)})}\right ) \right| \cdot n^\frac{2}{d+1} \geq \varepsilon \right) \\
&\ \leq \left(\varepsilon n^{-\frac{2}{d+1}} \right)^{-2} \Var_{\varrho}(W_q(K_{(n)}))  \\
&\ \ll \varepsilon^{-2} \, n^\frac{4}{d+1}\, n^{-\frac{d+3}{d+1}} \\
&\ \ll n^{-\frac{d-1}{d+1}}.
\end{align*}

Now, the proof may be finished in exactly the same way as, for example, on page 2295 in \cite{BoFoReVi2009} using the fact that $W_q(K)-W_q(K_{(n)})$ is monotonically decreasing with $n$.
\end{proof}

\section{Polarity and circumscribed random polytopes}
In this section, we will prove generalizations of Theorems~\ref{main} and \ref{thm:var-up-circ} with the help of polarity and Theorems~\ref{Satz2_q} and \ref{thm:var-up_q}, respectively. We will  follow a  similar reasoning as in \cite{BoFoHu2010}, however, with some modifications and supplements. For the sake of completeness, we begin with some notations and we repeat some of the statements originally proved in \cite{BoFoHu2010} that will be used in the present proof as 
well. 

The polar $K^*$ of a convex body $K$ in $\R^d$ is the closed, convex set $K^*:=\{y\in\R^d: \langle x,y\rangle\leq 1 \text{ for all }x\in K\}$. We assume that $o\in \inti K$, and so $K^*$ is also a convex body with $o\in\inti K^*$. For more information see \cite{Sch2014}. 

We fix a convex body $K\subset\R^d$ with  $o\in \inti K$ and describe the particular probability model we use for constructing the random polyhedral set $K^{(n)}$ in more detail. 
Let the radius $1$ parallel body of $K$ be denoted by $K_1=K+B^d$, and let $\HH$ be the space of hyperplanes in $\R^d$ with their usual topology. 
We denote by $\HH_K$ the subspace of $\HH$ whose elements intersect $K_1$ and are disjoint from the interior of $K$. 
For $H\in\HH_K$, let $H^-$ be the closed halfspace containing $K$. 
We assume that $\mu$ is the (unique) rigid motion invariant Borel measure on $\HH$ 
which is normalized such that
$\mu(\{H\in\HH: H\cap M\neq\emptyset\})=W(M)$
for each convex body $M$ in $\R^d$. 
Let $2\mu_K$ be the restriction of the measure $\mu$ onto $\HH_K$.
Then $\mu_K$ is a probability measure on $\HH_K$. 
Let $H_1,\ldots, H_n$ be independent random hyperplanes in $\R^d$, that is, independent $\HH$-valued random variables with distribution $\mu_K$, which are defined  on some suitable probability space. 
The intersection
$$K^{(n)}:=\bigcap_{i=1}^{n}H_i^{-}$$
with $H_i\in\HH_K$, for $i=1,\ldots, n$, is a random polyhedral set containing $K$.
Note that $K^{(n)}$ may be unbounded. 

Let $K$ be fixed as before. More general and with the same notations as in \cite{BoFoHu2010}, let $q:[0,\infty)\times S^{d-1}\to [0,\infty)$ denote a locally integrable function, and let 
\begin{equation}
\mu_q:=\frac{1}{d\kappa_d}\int_{S^{d-1}}\int_{0}^{\infty} \ii\{H(u,t)\in\cdot\}\, q(t,u)\, \D t \, \mathcal{H}^{d-1}(\D u).
\end{equation}
We assume that $q$ is
\begin{itemize}
\item[i)] concentrated on  $D_K^1:=\{(t,u)\in [0,\infty) \times S^{d-1}: h_K(u)\leq t\leq h_{K_1}(u)\}$, 

\item[ii)] positive and continuous at each point of 
the set $D_K$ in $D_K^1$, and

\item[iii)] satisfies $\mu_q(\HH_K)=1$. 
\end{itemize}
Then $\mu_q$ is a probability distribution of hyperplanes which is concentrated on $\mathcal{H}_K$. As before we write $H_1,\ldots,H_n$ for independent 
random hyperplanes following this distribution and $K^{(n)}$ for the intersection of the halfspaces containing $K$.  

\subsection{Proof of Theorem~\ref{main}}
In this subsection we prove Theorem \ref{thm:vol-mu}, which directly 
implies Theorem~\ref{main} in the case that $q \equiv 1 \equiv \lambda$ (in the notation of that theorem). 

In addition to the support function of a convex body $M\subset\R^d$, we now also need the radial function $\rho_M=\rho(M,\cdot):\R^d\setminus\{o\}\to[0,\infty)$ 
of $M$, but then we always assume that $o\in\inti M$. The radial function of $M$ is defined by $\rho(M,x):=\max\{t\ge 0:tx\in M\}$ for $x\in\R^d\setminus\{o\}$. For basic properties of radial functions and their connection 
to support functions, we refer to \cite{Sch2014}. 

Let $\lambda: \R \times S^{d-1} \rightarrow [0,\infty)$ be a measurable function. Then we define the weighted volume (i.e., the $\lambda$-weighted volume) of $M$ as
\[ V_\lambda(M) := \int_{S^{d-1}} \int_0^{\rho(M,u)} t^{d-1} \, \lambda(t,u) \, \D t \, \mathcal{H}^{d-1}(\D u). \]
We call $\lambda$ locally integrable, if the weighted volumes of all convex bodies $M$ with $o \in \inti M$ are finite. 

\begin{theorem} \label{thm:vol-mu}
Let $K\subset\R^d$ be a convex body with $o\in\inti K$ which slides freely inside a ball. 
Assume that the function
$q:[0,\infty)\times S^{d-1}\to [0,\infty)$ satisfies properties i), ii), and iii) as described above.
Let $\lambda: \R \times S^{d-1} \rightarrow [0,\infty)$ be a locally integrable function which is continuous at each point of the set $\{(\rho(K,u),u):u\in S^{d-1}\}$.
Then 
\begin{multline*}
\lim_{n\to\infty}n^{\frac{2}{d+1}}\, \E_{\mu_q} (V_\lambda(K^{(n)}\cap K_1)-V_\lambda(K))\\
=c_d\int_{\partial K} q(h_K(\sigma_K(x)), \sigma_K(x))^{-\frac{2}{d+1}}\,\lambda(\|x\|,x/\|x\|)\,\kappa(x)^{-\frac{1}{d+1}}\,\HH^{d-1}(\D x)
\end{multline*}
is finite and the constant $c_d$ is defined as in Theorem \ref{main}.
\end{theorem}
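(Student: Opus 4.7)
The plan is to reduce Theorem~\ref{thm:vol-mu} to Theorem~\ref{Satz2_q} applied to the polar body $K^*$, following the polarisation strategy of \cite{BoFoHu2010}. Using $h_{M^*}=1/\rho_M$ and $\rho_{M^*}=1/h_M$ together with the substitution $s=1/r$ in the radial integral defining $V_\lambda$, I would first rewrite
\[
V_\lambda(K^{(n)}\cap K_1)-V_\lambda(K)=\int_{S^{d-1}}\int_{h_{M_n}(u)}^{h_{K^*}(u)}\tilde\lambda(s,u)\,\D s\,\mathcal{H}^{d-1}(\D u),
\]
where $M_n:=(K^{(n)}\cap K_1)^*$ and $\tilde\lambda(s,u):=s^{-(d+1)}\lambda(1/s,u)$. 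The identity $(A\cap B)^*=[A^*\cup B^*]$ combined with $(H^-(u_i,t_i))^*=[o,\,u_i/t_i]$ yields $M_n=[K_1^*\cup\{p_1,\ldots,p_n\}]$ with $p_i:=u_i/t_i$. Under the bijection $(u,t)\mapsto y=u/t$ one computes $\D t\,\mathcal{H}^{d-1}(\D u)=\|y\|^{-(d+1)}\D y$, so the $p_i$ are i.i.d.\ with density
\[
\tilde q(y):=\frac{1}{d\kappa_d}\,q(1/\|y\|,y/\|y\|)\,\|y\|^{-(d+1)}
\]
on $\R^d$, supported on the shell $K^*\setminus\inti K_1^*$. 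Assumption (iii) makes $\tilde q$ a probability density, (ii) makes it positive and continuous at each $y\in\partial K^*$, and the hypothesis that $K$ slides freely in a ball translates, by a standard polarity argument, into a rolling ball inside $K^*$.

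Next I would compare $M_n$ to the inscribed random polytope $P_n:=[p_1,\ldots,p_n]\subset K^*$. Since $h_{M_n}(u)=\max(h_{K_1^*}(u),h_{P_n}(u))$ and $\inf_{u\in S^{d-1}}(h_{K^*}(u)-h_{K_1^*}(u))>0$ (because $K\subsetneq K_1$ strictly in every radial direction), the discrepancy $h_{M_n}(u)-h_{P_n}(u)$ vanishes outside the event that every $p_i$ satisfies $\langle p_i,u\rangle<h_{K_1^*}(u)$. By a Lemma~\ref{lemNotinKn}-type argument this event has probability $O((1-\gamma)^n)$ uniformly in $u$, so replacing $M_n$ by $P_n$ in the integral above costs only an $O(e^{-cn})$ error. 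Theorem~\ref{Satz2_q}, applied with body $K^*$, probability density $\tilde q$, and weight $\tilde\lambda$, then yields
\[
\lim_{n\to\infty}n^{\frac{2}{d+1}}\,\E_{\mu_q}\bigl(V_\lambda(K^{(n)}\cap K_1)-V_\lambda(K)\bigr)=\frac{c_d}{(d\kappa_d)^{\frac{2}{d+1}}}\,I,
\]
where $I:=\int_{\partial K^*}\kappa_{K^*}(y)^{\frac{d+2}{d+1}}\,\tilde\lambda(h_{K^*}(\sigma_{K^*}(y)),\sigma_{K^*}(y))\,\tilde q(y)^{-\frac{2}{d+1}}\,\mathcal{H}^{d-1}(\D y)$.

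The last step is to push $I$ back to $\partial K$ via the polar boundary map $\pi\colon\partial K\to\partial K^*$, $\pi(x):=\sigma_K(x)/h_K(\sigma_K(x))$. Writing $u=\sigma_K(x)$ and factoring $\pi$ as $\sigma_K$ followed by the radial parametrisation $u\mapsto u/h_K(u)$ of $\partial K^*$, a combination of Lemma~\ref{lemma2_62} with the Jacobian of this radial map gives
\[
\sigma_{K^*}(\pi(x))=\frac{x}{\|x\|},\qquad \|\pi(x)\|=\frac{1}{h_K(u)},\qquad J\pi(x)=\frac{\kappa(x)\,\|x\|}{h_K(u)^d},
\]
together with the classical polar curvature identity $\kappa_{K^*}(\pi(x))=h_K(u)^{d+1}/(\kappa(x)\,\|x\|^{d+1})$. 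Substituting these formulas, along with $\tilde\lambda(1/\|x\|,x/\|x\|)=\|x\|^{d+1}\lambda(\|x\|,x/\|x\|)$ and $\tilde q(\pi(x))=(d\kappa_d)^{-1}q(h_K(u),u)\,h_K(u)^{d+1}$, into $I$ and changing variables $y=\pi(x)$, one checks that all powers of $h_K(u)$ and $\|x\|$ cancel and the factor $(d\kappa_d)^{2/(d+1)}$ absorbs the prefactor, leaving precisely the asserted integral. The main obstacle is the bookkeeping of this polar change of variables, especially matching the constant $c_d$; the exponential-tail reduction from $M_n$ to $P_n$ and the verification that $\tilde q$ (which vanishes on $\inti K_1^*$) satisfies the hypotheses of Theorem~\ref{Satz2_q} are comparatively routine, since those hypotheses only concern the boundary behaviour of the density on $\partial K^*$.
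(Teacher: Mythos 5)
Your strategy is the same as the paper's, and the nontrivial bookkeeping is carried out correctly: the density $\tilde q$ you construct is exactly the paper's $\varrho$, the rolling ball for $K^*$ is Proposition~\ref{propmod}, and your polar curvature identity, Jacobian $J\pi$, and the cancellation of all powers of $h_K$ and $\|x\|$ do produce the stated integral with constant $c_d$. The gap is in the weight function. You set $\tilde\lambda(s,u):=s^{-(d+1)}\lambda(1/s,u)$, but this need not be locally integrable in the sense required by Theorem~\ref{Satz2_q}: local integrability of $\lambda$ only says $V_\lambda(M)<\infty$ for bounded bodies $M$, i.e.\ it controls $\int_0^R t^{d-1}\lambda(t,u)\,\D t$ for finite $R$, whereas
\[
\int_0^{\delta}s^{-(d+1)}\lambda(1/s,u)\,\D s=\int_{1/\delta}^{\infty}t^{d-1}\lambda(t,u)\,\D t
\]
may diverge. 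So Theorem~\ref{Satz2_q} cannot be invoked with your $\tilde\lambda$, and your remark that the hypotheses ``only concern the boundary behaviour of the density'' overlooks that the theorem also demands a locally integrable weight.

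The same problem undercuts your exponential-tail step. The cost of replacing $M_n=(K^{(n)}\cap K_1)^*$ by $P_n=[p_1,\ldots,p_n]$ is
\[
\E\int_{S^{d-1}}\ii\{h_{P_n}(u)<h_{(K_1)^*}(u)\}\int_{h_{P_n}(u)}^{h_{(K_1)^*}(u)}\tilde\lambda(s,u)\,\D s\,\HH^{d-1}(\D u),
\]
and the inner integral can be infinite (or arbitrarily large) when $h_{P_n}(u)$ is near $0$; bounding the probability of the bad event is not enough without a uniform bound on the integrand. The paper repairs both issues at once by defining $\widetilde\lambda(s,u):=\ii\{s\ge h_{(K_1)^*}(u)\}\,s^{-(d+1)}\lambda(s^{-1},u)$. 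Since $\inf_u h_{(K_1)^*}(u)>0$, this $\widetilde\lambda$ is locally integrable; it agrees with your $\tilde\lambda$ on the actual integration range $[h_{M_n}(u),h_{K^*}(u)]$ because $h_{M_n}\ge h_{(K_1)^*}$ always; the passage from $M_n$ to $P_n$ is then \emph{exact} on the event $\{K^{(n)}\subset K_1\}$; and the complementary event contributes $O(\alpha^n)$ because $\int_{S^{d-1}}\int_{-h_{K^*}(-u)}^{h_{K^*}(u)}\widetilde\lambda\,\D s\,\HH^{d-1}(\D u)$ is now finite. Insert this indicator and the rest of your argument goes through.
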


\begin{proof}
Let the nonnegative and measurable functional
$$F_\lambda(P):=\ii\{P\subset K_1\}(V_\lambda(P)-V_\lambda(K))$$
be defined for polyhedral sets $P$ in $\R^d$. 
Let $x_1, \dots, x_n \in K^*\backslash (K_1)^*$.
Then $o \in \inti K \subset [x_1, \ldots, x_n]^* \subset K_1$ and hence $o \in \inti\, [x_1, \ldots, x_n]$.
Using the substitution $s=\frac{1}{t}$ we obtain that
\begin{align*}
&F_\lambda([x_1, \ldots, x_n]^*)\\
&\ =\ii\{[x_1, \ldots, x_n]^*\subset K_1\}(V_\lambda([x_1, \ldots, x_n]^*)-V_\lambda(K))\\
&\ =\ii\{[x_1, \ldots, x_n]^*\subset K_1\}\int_{S^{d-1}}\int_{\rho(K,u)}^{\rho([x_1, \ldots, x_n]^*,u)} t^{d-1} \, \lambda(t,u) \,\D t\,\HH^{d-1}(\D u)\\
&\ =\ii\{[x_1, \ldots, x_n]^*\subset K_1\}\int_{S^{d-1}}\int_{h_{K^*}(u)^{-1}}^{h_{[x_1, \ldots, x_n]}(u)^{-1}}\,t^{d-1}\, \lambda(t,u)\,\D t\,\HH^{d-1}(\D u)\\
&\ =\ii\{[x_1, \ldots, x_n]^*\subset K_1\}\int_{S^{d-1}}\int^{h_{K^*}(u)}_{h_{[x_1, \ldots, x_n]}(u)}\widetilde{\lambda} (s,u)\,\D s\,\HH^{d-1}(\D u),
\end{align*}
where $\widetilde{\lambda} (s,u):=\ii\{s\ge h_{(K_1)^*}(u)\}\,s^{-(d+1)} \, \lambda(s^{-1},u)$ if $s>0$ and zero otherwise.

It was proved in \cite{BoSch2010} that $\PP_{\mu_q}(K^{(n)}\not\subset K_1)\ll \alpha^n$
for some real number $\alpha\in (0,1)$ depending on the convex body $K$ and the density $q$. Since the distributions of the random polyhedral sets $K^{(n)}$, based on $K$ and $q$, and $(K^*_{(n)})^*:=((K^*)_{(n)})^*$, based on $K^*$ and $\varrho$ (to be defined below), are the same (see Proposition 5.1 in \cite{BoFoHu2010} for a precise statement), it follows that 
\begin{align*}
&\E_{\mu_q}(V_\lambda(K^{(n)}\cap K_1)-V_\lambda(K))\\
&\ =\E_{\mu_q}(\ii\{K^{(n)}\subset K_1\}(V_\lambda(K^{(n)})-V_\lambda(K)))+O(\alpha^n)\\
&\ =\E_{\varrho, K^*}(\ii\{(K^*_{(n)})^*\subset K_1\}(V_\lambda((K_{(n)}^*)^*)-V_\lambda(K)))+O(\alpha^n)\\
&\ =\E_{\varrho, K^*}\left (\int_{S^{d-1}}\int^{h_{K^*}(u)}_{h_{[x_1, \ldots, x_n]}(u)}\widetilde{\lambda} (s,u)\,\D s\,\HH^{d-1}(\D u)\right )+O(\alpha^n),
\end{align*}
where $\varrho$ is defined as on page 516 in \cite{BoFoHu2010}, namely
\[ \varrho(x):= \begin{cases}
(d\kappa_d)^{-1} \, \tilde{q}(x) \, \|x\|^{-(d+1)}, & x \in K^* \backslash (K_1)^*, \\
0, & x \in (K_1)^*,
\end{cases} \]
and 
\[ \tilde{q}(x) := q\left( \frac{1}{\|x\|}, \frac{x}{\|x\|} \right), \quad x \in K^* \backslash \{o\}. \]
It is easy to check from the assumptions on $q$ that $\varrho$ is a probability density on $K^*$, which is positive and continuous at each point of $\partial K^*$. 
Moreover, the assumptions on $\lambda$ imply that $\widetilde{\lambda}$ is locally integrable and continuous 
at each point of $D_{K^*}$. 
Since Proposition~\ref{propmod} shows that $K^*$ has a rolling ball, Theorem~\ref{Satz2_q} can be applied with $K^*,q$ and $\varrho$ as defined here. This yields that
\begin{align*}
&\lim_{n\to\infty} n^{\frac{2}{d+1}}\,\E_{\mu_q}(V_\lambda(K^{(n)}\cap K_1)-V_\lambda(K))\\
&\quad=\lim_{n\to\infty}n^{\frac{2}{d+1}}\,\E_{\varrho, K^*}\left (\int_{S^{d-1}}\int^{h_{K^*}(u)}_{h_{[x_1, \ldots, x_n]}(u)} \widetilde{\lambda} (s,u)\,\D s\,\HH^{d-1}(\D u)\right )\\
&\quad=c_d(d\kappa_d)^{-\frac{2}{d+1}}\int_{\partial K^*}\widetilde{\lambda}(h_{K^*}(\sigma_{K^*}(x)),\sigma_{K^*}(x))\, \varrho(x)^{-\frac{2}{d+1}}\, \kappa^*(x)^{\frac{d+2}{d+1}}\, \HH^{d-1}(\D x)\\
&\quad=c_d\int_{\partial K^*}(h_{K^*}(\sigma_{K^*}(x)))^{-(d+1)}\lambda(h_{K^*}(\sigma_{K^*}(x))^{-1},\sigma_{K^*}(x))\,\tilde{q}(x)^{-\frac{2}{d+1}} \,\|x\|^2 \\
&\quad\quad \times \kappa^*(x)^{\frac{d+2}{d+1}}\,\HH^{d-1}(\D x),
\end{align*}
where $\kappa^*(x)$ denotes the generalized curvature of $\partial K^*$ in $x$.
Applying Lemma~6.1 from \cite{BoFoHu2010} (cf. p. 519 and the notation and terminology used in \cite{BoFoHu2010}), we obtain
\begin{align*}
&\lim_{n\to\infty} n^{\frac{2}{d+1}}\,\E_{\mu_q}(V_\lambda(K^{(n)}\cap K_1)-V_\lambda(K))\\
&\quad=c_d\int_{S^{d-1}}\rho(K, \sigma_{K^*}(\nabla h_{K^*}(u)))^{d+1}\,\tilde{q}(\nabla h_{K^*}(u))^{-\frac{2}{d+1}}\,\|\nabla h_{K^*}(u)\|^2\\
&\quad\quad\times \lambda(\rho(K,\sigma_{K^*}(\nabla h_{K^*}(u))),\sigma_{K^*}(\nabla h_{K^*}(u)))\,\kappa^*(\nabla h_{K^*}(u))\\
&\quad\quad\times D_{d-1}h_{K^*}(u)^{\frac{d}{d+1}}\,\HH^{d-1}(\D u),
\end{align*}
where $D_{d-1}h_{K^*}(u)$ denotes the product of the principal radii of curvature of $K^*$ in direction $u$ for $\mathcal{H}^{d-1}$ almost all $u \in S^{d-1}$.
Since a ball rolls freely inside $K^*$, we have $\sigma_{K^*}(\nabla h_{K^*}(u))=u$ for $\mathcal{H}^{d-1}$ almost all $u\in S^{d-1}$. Moreover, for 
$\mathcal{H}^{d-1}$ almost all $u\in S^{d-1}$, the support function  $h_{K^*}$ of $K^*$ at $u$ is second order differentiable and $\nabla h_{K^*}(u)$ 
is a normal boundary point of $K^*$. Hence, combining Lemma 3.1 and Lemma 3.4 in \cite{Hug1998}, we conclude for any such $u$ that 
$$
\kappa^*(\nabla h_{K^*}(u))=D_{d-1}h_{K^*}(u)^{-1}\in (0,\infty).
$$
(Note that if $K^*$ has a rolling ball of radius $r(K^*)>0$, then $D_{d-1}h_{K^*}(u)$ is uniformly bounded from below by $r(K^*)^{d-1}$ whenever it is defined.)  
Thus
\begin{align*}
&\lim_{n\to\infty} n^{\frac{2}{d+1}}\,\E_{\mu_q}(V_\lambda(K^{(n)}\cap K_1)-V_\lambda(K))\\
&\quad=c_d\int_{S^{d-1}}\rho(K, u)^{d+1}\,\tilde{q}(\nabla h_{K^*}(u))^{-\frac{2}{d+1}}\,\lambda(\rho(K,u),u)\,\|\nabla h_{K^*}(u)\|^2\\
&\quad\quad\times D_{d-1}h_{K^*}(u)^{-\frac{1}{d+1}}\,\HH^{d-1}(\D u).
\end{align*}

According to Theorem~2.2 in \cite{Hug1996b} for $\mathcal{H}^{d-1}$ almost all $u\in S^{d-1}$ the support function of $K^*$ is second order differentiable in $u$ and $x=\rho(K,u)u\in\partial K$ is a normal boundary point of $K$. 
Moreover, for each such $u\in S^{d-1}$, 
$$D_{d-1}h_{K^*}(u)^{\frac{1}{d+1}}=\kappa(x)^{\frac{1}{d+1}}\langle u, \sigma_{K}(x)\rangle^{-1}.$$
(Since $D_{d-1}h_{K^*}(u)$ is uniformly bounded from below by $r(K^*)^{d-1}$ it follows that
\[ \kappa(x) \geq \langle x, \sigma_{K}(x)\rangle^{d+1} \, r(K^*)^{d-1} \geq r_0^{d+1} \, r(K^*)^{d-1}  >0, \]
where we used that $\langle x, \sigma_{K}(x)\rangle = h_K(\sigma_K(x)) \geq r_0$ for a constant $r_0 >0$.
Thus, $\kappa(x)$ is uniformly bounded from below.)

With the help of the above, we obtain that
\begin{align*}
&\lim_{n\to\infty} n^{\frac{2}{d+1}}\,\E_{\mu_q}(V_\lambda(K^{(n)}\cap K_1)-V_\lambda(K))\\
&\quad=c_d\int_{S^{d-1}}\rho(K, u)^{d+1}\,\tilde{q}(\nabla h_{K^*}(u))^{-\frac{2}{d+1}}\,\lambda(\rho(K,u),u)\,\|\nabla h_{K^*}(u)\|^2\\
&\quad\quad\times \kappa(\rho(K,u)u)^{-\frac{1}{d+1}} \, \langle u,\sigma_K(\rho(K,u)u)\rangle\, \HH^{d-1}(\D u).
\end{align*}

Now, we will use the map $T:S^{d-1}\to \partial K$, $u\to \rho(K,u)u$, which is bijective and bilipschitz. From Lemma~2.4 in \cite{Hug1996b}, the Jacobian of $T$ is
$$JT(u)=\frac{\|\nabla h_{K^*}(u)\|}{h_{K^*}(u)^d}$$
for $\HH^{d-1}$ almost all $u\in S^{d-1}$.
Using this fact, we get that
\begin{align*}
&\lim_{n\to\infty} n^{\frac{2}{d+1}}\,\E_{\mu_q}(V_\lambda(K^{(n)}\cap K_1)-V_\lambda(K))\\
&\quad=c_d\int_{\partial K}\rho(K, x/\|x\|)^{d+1}\,\tilde{q}(\nabla h_{K^*}(x/\|x\|))^{-\frac{2}{d+1}}\,\lambda(\rho(K,x/\|x\|),x/\|x\|) \\
&\quad\quad\times \|\nabla h_{K^*}(x/\|x\|)\| \, \kappa(x)^{-\frac{1}{d+1}}\langle x/\|x\|,\sigma_K(x)\rangle\, h_{K^*}(x/\|x\|)^d \,\HH^{d-1}(\D x)\\
&\quad=c_d\int_{\partial K}\tilde{q}(\nabla h_{K^*}(x))^{-\frac{2}{d+1}}\,\lambda(\|x\|,x/\|x\|)\,\|\nabla h_{K^*}(x)\|\,\kappa(x)^{-\frac{1}{d+1}} \\
&\quad\quad\times \langle x,\sigma_K(x)\rangle \, \HH^{d-1}(\D x)\\
&\quad=c_d\int_{\partial K} q(\|\nabla h_{K^*}(x)\|^{-1}, \nabla h_{K^*}(x)/\|\nabla h_{K^*}(x)\|)^{-\frac{2}{d+1}}\,\lambda(\|x\|,x/\|x\|) \\
&\quad\quad \times \|\nabla h_{K^*}(x)\| \, \kappa(x)^{-\frac{1}{d+1}}\langle x,\sigma_K(x)\rangle \, \HH^{d-1}(\D x)\\
&\quad=c_d\int_{\partial K}q(h_K(\sigma_K(x)), \sigma_K(x))^{-\frac{2}{d+1}}\,\lambda(\|x\|,x/\|x\|)\,\kappa(x)^{-\frac{1}{d+1}}\,\HH^{d-1}(\D x),
\end{align*}
which completes the proof.
\end{proof}

\subsection{Upper bound on the variance for circumscribed polytopes}
In this subsection, we prove Theorem~\ref{thm:var-up-circ}. In fact, we again prove more than strictly necessary for the verification of Theorem~\ref{thm:var-up-circ}. The main content of the subsection is described in the following theorem. 
 
\begin{theorem} \label{thm_var-up-vol}
Let $K$ be a convex body in $\mathbb{R}^d$ with $o \in \inti K$ which slides freely in a ball. 
Assume that the function $q: [0,\infty) \times S^{d-1} \rightarrow [0,\infty)$ satisfies the properties i), ii) and iii). 
Let $\lambda: \R \times S^{d-1} \rightarrow [0,\infty)$ be a locally integrable function which is continuous at each point of the set $\{(\rho(K,u),u):u\in S^{d-1}\}$.
Then
\[ \Var_{\mu_q}(V_\lambda(K^{(n)}\cap K_1)) \ll n^{-\frac{d+3}{d+1}}, \]
where the implied constant depends only on $K$, $\lambda$ and $q$.
\end{theorem}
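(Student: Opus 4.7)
The plan is to reduce the problem to Theorem~\ref{thm:var-up_q} by the polarity correspondence already exploited in the proof of Theorem~\ref{thm:vol-mu}. By that correspondence, $K^{(n)}$ under $\mu_q^{\otimes n}$ has the same distribution as $(K^*_{(n)})^*$ when $x_1,\ldots,x_n$ are i.i.d.\ on $K^*$ with the density $\varrho$ introduced in the proof of Theorem~\ref{thm:vol-mu}, so it is enough to bound $\Var_{\varrho,K^*}(V_\lambda((K^*_{(n)})^*\cap K_1))$.

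Next I would repeat the substitution $s=1/t$ carried out in the proof of Theorem~\ref{thm:vol-mu} to see that, on the ``good'' event $A_n:=\{(K^*_{(n)})^*\subset K_1\}$ (whose complement satisfies $\PP_{\varrho,K^*}(A_n^c)=O(\alpha^n)$ for some $\alpha\in(0,1)$ by \cite{BoSch2010}), one has
\[
V_\lambda((K^*_{(n)})^*\cap K_1)-V_\lambda(K)=\tfrac{d\kappa_d}{2}\bigl(W_{\tilde\lambda}(K^*)-W_{\tilde\lambda}(K^*_{(n)})\bigr)
\]
with the weight $\tilde\lambda(s,u):=\ii\{s\ge h_{(K_1)^*}(u)\}\,s^{-(d+1)}\,\lambda(s^{-1},u)$ already used there. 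The indicator truncates the singularity of $s^{-(d+1)}$ at $s=0$, so $\tilde\lambda$ is bounded, locally integrable on $\R\times S^{d-1}$, and continuous at every point of $D_{K^*}$ by the continuity of $\lambda$ at the points $(\rho(K,u),u)$. Since $K$ slides freely in a ball, $K^*$ admits a rolling ball (Proposition~\ref{propmod}), so Theorem~\ref{thm:var-up_q} applies to $(K^*,\varrho,\tilde\lambda)$ and yields $\Var_{\varrho,K^*}(W_{\tilde\lambda}(K^*_{(n)}))\ll n^{-(d+3)/(d+1)}$.

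The step I expect to be the main obstacle is transferring this estimate to the unrestricted functional, i.e., absorbing the contribution of $A_n^c$. I would introduce the defect
\[
D_n:=V_\lambda((K^*_{(n)})^*\cap K_1)-V_\lambda(K)-\tfrac{d\kappa_d}{2}\bigl(W_{\tilde\lambda}(K^*)-W_{\tilde\lambda}(K^*_{(n)})\bigr),
\]
which vanishes on $A_n$; because $V_\lambda((K^*_{(n)})^*\cap K_1)\le V_\lambda(K_1)$ and the boundedness of $\tilde\lambda$ combined with $h_{K^*_{(n)}}\le h_{K^*}$ force $W_{\tilde\lambda}(K^*_{(n)})$ to be uniformly bounded, $D_n$ is almost surely bounded by a constant. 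Hence $\Var(D_n)\le\E[D_n^2]=O(\alpha^n)$, and the elementary inequality $\Var(X+Y)\le 2\Var(X)+2\Var(Y)$ combines the two estimates to give $\Var_{\mu_q}(V_\lambda(K^{(n)}\cap K_1))\ll n^{-(d+3)/(d+1)}$, as claimed.
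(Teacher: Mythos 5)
Your argument is correct and arrives at the paper's bound by a closely related but not identical route. The paper applies the Efron--Stein inequality directly to $V_\lambda(K^{(n)}\cap K_1)$, polarizes the resulting increment into $W_{\widetilde{\lambda}}(K^*_{(n+1)})-W_{\widetilde{\lambda}}(K^*_{(n)})$ (absorbing the event $K^{(n)}\not\subset K_1$ as an $O(\alpha^n)$ term inside the expectation), and then invokes the estimate for $n\,\E_{\varrho,K^*}\bigl(W_{\widetilde{\lambda}}(K^*_{(n+1)})-W_{\widetilde{\lambda}}(K^*_{(n)})\bigr)^2$ that is established inside the proof of Theorem~\ref{thm:var-up_q}; you instead polarize the random variable itself, observe that off an event of probability $O(\alpha^n)$ it equals an affine function of $W_{\widetilde{\lambda}}(K^*_{(n)})$ (with the correct factor $\tfrac{d\kappa_d}{2}$), and then quote Theorem~\ref{thm:var-up_q} as stated, controlling the defect $D_n$ via an almost sure bound and $\Var(X+Y)\le 2\Var(X)+2\Var(Y)$. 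What your version buys is that the dual variance theorem is used as a black box (its statement rather than an intermediate estimate from its proof), at the cost of the extra defect-splitting step; both versions rest on the same ingredients, namely Proposition~\ref{propmod}, the equality in distribution of $K^{(n)}$ and $(K^*_{(n)})^*$, and the identity expressing $V_\lambda$-differences through $\widetilde{\lambda}$ from the proof of Theorem~\ref{thm:vol-mu}. One minor slip: $\widetilde{\lambda}$ need not be bounded, since $\lambda$ is only locally integrable and continuous on $\{(\rho(K,u),u):u\in S^{d-1}\}$; this is harmless, because the uniform bound on $D_n$ only requires $0\le W_{\widetilde{\lambda}}(K^*_{(n)})\le W_{\widetilde{\lambda}}(K^*)<\infty$ (monotonicity plus local integrability) together with $0\le V_\lambda((K^*_{(n)})^*\cap K_1)\le V_\lambda(K_1)<\infty$, which your argument essentially already uses.
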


It is clear that Theorem~\ref{thm_var-up-vol} directly implies Theorem~\ref{thm:var-up-circ}.

\begin{proof}
The Efron-Stein jackknife inequality yields
\begin{align*}
&\Var_{\mu_q}(V_\lambda(K^{(n)}\cap K_1)) \\
& \ \leq (n+1)\, \E_{\mu_q}\left(V_\lambda(K^{(n)}\cap K_1) - V_\lambda(K^{(n+1)}\cap K_1)\right)^2 \\
& \ \ll n \, \E_{\mu_q} \left(V_\lambda(K^{(n)}\cap K_1) - V_\lambda(K^{(n+1)}\cap K_1)\right)^2.
\end{align*}

We can now use the same technique as in the proof of Theorem ~\ref{main}.
Since $\PP_{\mu_q}(K^{(n)}\not\subset K_1)\ll \alpha^n$ for some real number $\alpha\in (0,1)$ depending on the convex body $K$ and the density $q$ (cf. \cite{BoSch2010}) and since the distributions of the random polyhedral sets $K^{(n)}$ and $(K^*_{(n)})^*$ are the same, it follows that
\begin{align*}
&\Var_{\mu_q}(V_\lambda(K^{(n)}\cap K_1)) \\
& \ \ll n \left( \E_{\mu_q} \left(\mathbf{1}\{K^{(n)} \subset K_1\} (V_\lambda(K^{(n)}) - V_\lambda(K^{(n+1)}))^2 \right) + O(\alpha^n) \right) \\
& \ = n \left( \E_{\varrho, K^*} \left(\mathbf{1}\{(K_{(n)}^*)^* \subset K_1\} (V_\lambda((K_{(n)}^*)^*) - V_\lambda((K_{(n+1)}^*)^*))^2 \right) + O(\alpha^n) \right) \\
& \ = n \left( \E_{\varrho, K^*} \left(\int_{S^{d-1}} \int_{h_{[x_1,\dots,x_n]}(u)}^{h_{[x_1,\dots,x_{n+1}]}(u)} \widetilde{\lambda}(s,u) \, \D s \,\mathcal{H}^{d-1}(\D u) \right)^2 + O(\alpha^n) \right) \\
& \ \ll n \left( \E_{\varrho, K^*} \left( W_{\widetilde{\lambda}}(K_{(n+1)}^*) - W_{\widetilde{\lambda}}(K_{(n)}^*) \right)^2 + O(\alpha^n) \right),
\end{align*}
where $x_1,\dots,x_{n+1} \in K^* \backslash K^*_1$ and $\widetilde{\lambda}$ is defined as in the proof of Theorem~\ref{thm:vol-mu}.

From Proposition~\ref{propmod} it follows that a ball rolls freely in $K^*$. 
Thus, Theorem~\ref{thm:var-up_q} yields the upper bound on the variance.
\end{proof}

Finally, we remark that the proof of Theorem~\ref{thm:lln-circ} is analogous to the one of Theorem~\ref{thm:law-of-large-num} using the monotonicity of $V_\lambda(K^{(n)}\cap K_1)-V_\lambda(K)$  with respect to $n$.

\section{Appendix}
In this section, we describe some results that are used in the proofs of 
Theorems~\ref{thm:vol-mu} and \ref{thm_var-up-vol}. 
The arguments are taken from \cite{Hug2000}. Let $B^d(t,R)$ denote a Euclidean ball 
with center $t$ and radius $R$.
 
\begin{lemma}
\label{LemmaA} 
Let $R>0$, $t\in \R^d$ and $\|t\|<R$.  
Then $B^d(t,R)^{*}$ is an ellipsoid of revolution.  
The lengths of its semiaxes are
\[a_{1}=\frac{R}{R^{2}-\|t\|^{2}}\quad\mbox{\rm and}\quad a_{2}=\cdots=a_{d}=\frac{1}{\sqrt{R^{2}-\|t\|^{2}}}.\] 
\end{lemma}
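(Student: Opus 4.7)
The plan is to compute the polar body of $B=B^d(t,R)$ directly from its support function and read off an explicit ellipsoid equation. Since $h_B(u)=\langle t,u\rangle+R\|u\|$ for $u\in\R^d$, by definition
\[
B^{*}=\{y\in\R^d:\langle x,y\rangle\leq 1\text{ for all }x\in B\}=\{y\in\R^d:\langle t,y\rangle+R\|y\|\leq 1\}.
\]
Because the right-hand inequality is invariant under rotations fixing the direction of $t$, the body $B^{*}$ is a solid of revolution around the line through $o$ and $t$, so it suffices to show it is an ellipsoid in the case $t=0$ (trivially the ball $B^d(o,1/R)$) and, when $t\neq o$, to choose coordinates so that $t=\|t\|e_1$.

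In these coordinates, a point $y=(y_1,\dots,y_d)\in B^{*}$ satisfies $R\|y\|\leq 1-\|t\|y_1$. Here the right-hand side is nonnegative on $B^*$, so I can square the inequality equivalently (on the relevant region) to obtain
\[
(R^2-\|t\|^2)y_1^2+2\|t\|y_1+R^2(y_2^2+\cdots+y_d^2)\leq 1.
\]
Using $R>\|t\|$ so that $R^2-\|t\|^2>0$, completing the square in $y_1$ and then dividing through by $R^2/(R^2-\|t\|^2)$, I recover the standard form of an ellipsoid centered at $-\frac{\|t\|}{R^2-\|t\|^2}e_1$ with semiaxis $\frac{R}{R^2-\|t\|^2}$ in the $e_1$ direction and semiaxes $\frac{1}{\sqrt{R^2-\|t\|^2}}$ in the orthogonal directions, which is exactly the claim.

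The only delicate point is to check that squaring does not introduce spurious solutions, i.e.\ that every $y$ satisfying the quadratic inequality actually satisfies $\langle t,y\rangle+R\|y\|\leq 1$. This amounts to verifying $1-\|t\|y_1\geq 0$ for all $y$ in the resulting ellipsoid. This follows from the explicit range of the $y_1$-coordinate on the ellipsoid, namely
\[
y_1\in\Bigl[-\tfrac{1}{R-\|t\|},\tfrac{1}{R+\|t\|}\Bigr],
\]
which gives $\|t\|y_1\leq\frac{\|t\|}{R+\|t\|}<1$. Thus the squaring is reversible, $B^*$ coincides with the stated ellipsoid of revolution, and the semiaxis lengths are as asserted. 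No step presents a genuine obstacle; the computation is essentially a completion of the square, and the rotational symmetry observation reduces everything to a two–dimensional algebraic manipulation.
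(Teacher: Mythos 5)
Your proof is correct and follows essentially the same route as the paper: reduce to $t=\|t\|e_1$, use the support function of the ball, and complete the square to obtain an explicit ellipsoid. The one cosmetic difference is that you work at the level of the body $B^*=\{y:h_B(y)\le 1\}$ and explicitly verify that the squaring step is reversible (by bounding the $y_1$-range on the ellipsoid), whereas the paper works at the boundary level via $\rho(B^*,u)=h_B(u)^{-1}$ and then argues that $\partial B^*$, being contained in the boundary of the ellipsoid, must coincide with it. Your reversibility check is a clean and self-contained substitute for the paper's topological containment argument; both are valid.
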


\begin{proof}  
We may assume that $t=\|t\|e_{1}$, where $(e_{1},\ldots,e_{d})$ is the standard orthonormal basis of $\R^d$. 
Then, for $u\in S^{d-1}$, we have
\[\rho(B^d(t,R)^*,u)=h_{B^d(t,R)}(u)^{-1}=(\langle t,u\rangle +R)^{-1}.\] 
If we set $x:=\rho(B^d(t,R)^{*},u)u$, $u\in S^{d-1}$, then we find  
\[\|x\|^{2}=\frac{1}{R^{2}}\left(1-\frac{\langle t,u\rangle}{\langle t,u\rangle +R}\right)^{2}=\frac{1}{R^{2}}(1-\langle t,x\rangle)^{2}.\] 
Hence, any boundary point $x$ of $B^d(t,R)^{*}$ satisfies the equation
\[x_{1}^{2}-\left(\frac{\|t\|}{R}\right)^{2}x_{1}^{2}+\frac{2\|t\|}{R^{2}}x_{1} +\sum_{i=2}^{d}x_{i}^{2}=\frac{1}{R^{2}},\]
where $x_{1},\ldots,x_{d}$ are the coordinates of $x$ with respect to  $e_{1},\ldots,e_{d}$, the standard basis of $\R^d$.
Elementary calculations finally show that this is equivalent to
\[\frac{\left(x_{1}+\frac{\|t\|}{R^{2}-\|t\|^{2}}\right)^{2}}{\left(\frac{R}{R^{2}-\|t\|^{2}}\right)^{2}}+\sum_{i=2}^{d}\frac{x_{i}^{2}}{\left(\frac{1} {\sqrt{R^{2}-\|t\|^{2}}}\right)^{2}}=1.\] 
This proves that $\partial B^d(t,R)^{*}$ is contained in and thus coincides with the boundary of an ellipsoid with semiaxes as described in the statement of the lemma.
\end{proof}

\begin{lemma}
\label{LemmaB} 
Let $R>0$ be fixed.  
Let $\mathcal{E}_{t}$, $t\in [0,R)$, be an ellipsoid of revolution in $\R^d$ the semiaxes of which have the lengths $a_{1}=R\omega^{2}$ and $a_{2}=\ldots=a_{d}=\omega$, where
$\omega:=(R^{2}-t^{2})^{-\frac{1}{2}}$.  
Then the principal radii of curvature of $\mathcal{E}_{t}$ can be uniformly bounded from below by $R^{-1}$.
\end{lemma}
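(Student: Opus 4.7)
The plan is to reduce the lemma to an explicit curvature computation on a prolate ellipsoid of revolution. A key preliminary observation is that, for $t\in[0,R)$, one has $\omega^2=(R^2-t^2)^{-1}\geq R^{-2}$, hence $R\omega\geq 1$ and therefore $a_1=R\omega^2\geq\omega=a_2=\cdots=a_d$. So the axis of revolution, which we may take to be spanned by $e_1$, is the major axis of $\mathcal{E}_t$, and we write $a:=\omega$ for the common length of the remaining semiaxes.

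I would then exploit the rotational symmetry and parametrize $\partial\mathcal{E}_t$ as a surface of revolution, $x(\theta,v)=(a_1\cos\theta,\,a\sin\theta\,v)$, $\theta\in(0,\pi)$, $v\in S^{d-2}$. Applying standard formulas for the principal curvatures of a surface of revolution (generated here by the ellipse with semiaxes $a_1$ and $a$), one finds at each non-polar point exactly two distinct principal radii: a meridional radius $r_m(\theta)$ and a parallel radius $r_p(\theta)$ of multiplicity $d-2$, given by
\[
r_m(\theta)=\frac{(a_1^2\sin^2\theta+a^2\cos^2\theta)^{3/2}}{a_1 a},\qquad r_p(\theta)=\frac{a\,(a_1^2\sin^2\theta+a^2\cos^2\theta)^{1/2}}{a_1}.
\]
Since $a_1\geq a$, the common factor $a_1^2\sin^2\theta+a^2\cos^2\theta$ attains its minimum $a^2$ at the poles $\theta\in\{0,\pi\}$, so for every non-polar point
\[
r_m(\theta),\; r_p(\theta)\;\geq\;\frac{a^2}{a_1}\;=\;\frac{\omega^2}{R\omega^2}\;=\;\frac{1}{R},
\]
uniformly in $\theta$, $v$ and $t$. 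At the poles $\mathcal{E}_t$ is umbilical with the same common principal radius $a^2/a_1=1/R$, as one reads off from the meridian ellipse, so the bound extends continuously to the whole of $\partial\mathcal{E}_t$.

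The only real obstacle is the bookkeeping surrounding the derivation of the formulas for $r_m$ and $r_p$; once they are in hand, the lower bound $1/R$ is immediate from $a_1=R\omega^2$ and $a=\omega$. An alternative, equally direct route is to diagonalize the Hessian of the support function $h_{\mathcal{E}_t}(u)=(a^2+(a_1^2-a^2)u_1^2)^{1/2}$ restricted to $u^\perp$; by rotational symmetry this Hessian has the same two eigenvalues as above, and the bound follows identically.
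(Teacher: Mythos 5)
Your proof is correct, and it takes a genuinely different route from the paper's. The paper computes the Hessian of the support function $h_{\mathcal{E}(a,b)}$ of a general ellipsoid of revolution, identifies the eigenvectors of the reverse Weingarten map, and reads off the principal radii of curvature via Lemma 2.5.1 in Schneider's book, parametrizing by the outer unit normal direction $x$ with $x_1^2\in[0,1]$. The bound then follows because the explicit expressions $r_2(x)=R^{-1}(1-(t/R)^2+(t/R)^2 x_1^2)^{-3/2}$ and $r_i(x)=R^{-1}(1-(t/R)^2+(t/R)^2 x_1^2)^{-1/2}$ have a factor in the base that never exceeds $1$. You instead parametrize by the angle $\theta$ along the meridian and invoke the classical surface-of-revolution formulas for the meridional radius $r_m$ and the parallel (Meusnier) radius $r_p$. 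The two computations produce the same pair of principal radii, just expressed in different coordinates: your $(a_1^2\sin^2\theta+a^2\cos^2\theta)$ is the reciprocal-normalization of the paper's $(a^2x_1^2+b^2x_2^2)$. Your approach has the advantage of making the lower bound transparent: once one notes $R\omega\geq 1$, hence $a_1\geq a$, the shared factor is bounded below by $a^2$ (attained at the poles), and $a^2/a_1=1/R$ falls out in one line without the substitution gymnastics. The paper's support-function route is more self-contained within the convex-geometry framework it uses throughout (Weingarten maps, reverse second fundamental forms) and avoids appealing to the surface-of-revolution formulas, which you quote without deriving; a complete write-up of your version would need a derivation or a precise reference for $r_m$ and $r_p$, but this is routine. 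You also observe, correctly, that the same computation can be run through the support function $h_{\mathcal{E}_t}(u)=(a^2+(a_1^2-a^2)u_1^2)^{1/2}$, which is in fact close to the paper's actual method.
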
 

\begin{proof}
In order to obtain the principal radii of curvature of $\mathcal{E}_{t}$, we consider a general ellipsoid $\mathcal{E}(a,b)$ of revolution with semiaxes lengths $a_{1}=a$ and $a_{2}=\ldots= a_{d}=b$, where $a,b>0$. 
By choosing suitable coordinates, we obtain that $\mathcal{E}(a,b)=A\cdot B^d$, where $A=(a_{ij})$ is a real $d \times d$ matrix with $a_{ii}=a_{i}$, for $i\in \{1,\ldots,d\}$, and $a_{ij}=0$, for $i\neq j$ and $i,j\in \{1,\ldots,d\}$. 
Then elementary calculations yield for the second derivatives of the support function $h:=h_{\mathcal{E}(a,b)}(\cdot)$ at $x\in \R^d\setminus\{o\}$ that
\[\frac{\partial^{2} h}{\partial x_{i}\partial x_{j}}(x)=\frac{a^{2}b^{2}}{\|Ax\|^{3}}\cdot\left\{\begin{array}{ll}
\displaystyle{\sum_{k=2}^{d}x_{k}^{2}}\,,&i=j=1\,,\\
\displaystyle{x_{1}^{2}+\frac{b^2}{a^2}\sum_{k=2,\, k\neq
i}^{d}x_{k}^{2}\,,}&i=j\neq 1\,,\\
\displaystyle{-x_{1}x_{j}}\,,&i=1\neq j\,,\\[1ex]
\displaystyle{-\frac{b^{2}}{a^2}x_{i}x_{j}}\,,&1\neq i\neq
j\neq 1\,.  \end{array}\right.\]
In order to determine the principal radii of curvature of ${\mathcal{E}}(a,b)$, we can restrict ourselves to the case where $x=(x_{1},x_{2},0,\ldots,0)$ and $\|x\|=1$, since ${\mathcal{E}}(a,b)$ has rotational symmetry.  
In addition, the eigenvectors $u_{2},\ldots,u_{d}\in S^{d-1}$ of the reverse Weingarten map $\overline{W}_{x}$ of ${\mathcal{E}}(a,b)$ at $x$ are equal to the eigenvectors of the Weingarten map of ${\mathcal{E}}(a,b)$ at the uniquely determined boundary point of ${\mathcal{E}}(a,b)$ with exterior unit normal vector $x$ (compare \cite{Sch2014}, \S 2.5).  
The latter are given by
\[u_{2}=(x_{2},-x_{1},0,\ldots,0)\quad\mbox{\rm and}\quad u_{i}=e_{i}\,,\quad
i\in \{3,\ldots,d\}\,;\] see, e.g., Chapter 3, IV in \cite{Spivak1979}.  
Finally, Lemma 2.5.1 in \cite{Sch2014} and some further calculations yield for the corresponding principal radii of curvature $r_{2}(x),\ldots,r_{d}(x)$ of ${\mathcal{E}}(a,b)$ at $x$ that
\[r_{2}(x)=\overline{\rm II}_{x}(u_{2},u_{2})=\frac{a^{2}b^{2}}{\sqrt{a^{2}x_{1}^{2}+b^{2}x_{2}^{2}}^{3}} \] 
and
\[r_{i}(x)=\overline{\rm II}_{x}(u_{i},u_{i})=\frac{b^{2}}{\sqrt{a^{2}x_{1}^{2}+b^{2}x_{2}^{2}}}\,, \quad i\in \{3,\ldots,d\},\]
where $\overline{\rm II}_x$ denotes the reverse second fundamental form of $\mathcal{E}(a,b)$ at $x$.
  
From these calculations we obtain for  $r_{2}(x),\ldots,r_{d}(x)$, the principal radii of curvature of the ellipsoid $\mathcal{E}_{t}$ in direction $x=(x_{1},x_{2},0,\ldots,0)\in S^{d-1}$, that
\[r_{2}(x)=\overline{\rm II}_{x}(u_{2},u_{2})=R^{-1}\left(1-\left(\frac{t}{R}\right)^{2}+\left(\frac{t}{R}\right)^{2}x_{1}^{2}\right)^{-\frac{3}{2}}\] 
and
\[r_{i}(x)=\overline{\rm II}_{x}(u_{i},u_{i})=R^{-1}\left(1-\left(\frac{t}{R}\right)^{2}+\left(\frac{t}{R}\right)^{2}x_{1}^{2}\right)^{-\frac{1}{2}},\quad i\in\{3,\ldots,d\}.\]
This proves the lemma, since $x_{1}^{2}\in [0,1]$.
\end{proof}
 
\begin{prop}
\label{propmod} 
Let $K$ be a convex body in $\R^d$ with $o \in \inti K$ and assume that the polar body $K^*$ is a summand of the ball $B^d(o,R)$ with $R>0$.  
Then $B^d(o,R^{-1})$ rolls freely inside $K$.
\end{prop}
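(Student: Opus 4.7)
The plan is to exhibit, for each $p \in \partial K$ with outer unit normal $u$, an ellipsoid $E \subseteq K$ touching $\partial K$ at $p$ with the same outer normal and whose principal radii of curvature are uniformly bounded below by $R^{-1}$; a ball of radius $R^{-1}$ tangent to $E$ at $p$ from inside will then lie in $E \subseteq K$ and pass through $p$, realising the rolling ball property at $p$. To set up, fix a convex body $L$ with $K^{*}+L = B^{d}(o,R)$. Since $K$ is bounded, $o \in \inti K^{*}$, so $B^{d}(o,\varepsilon) \subseteq K^{*}$ for some $\varepsilon > 0$; this forces $L \subseteq B^{d}(o, R-\varepsilon)$, in particular $\|t\| < R$ for every $t \in L$, making Lemma~\ref{LemmaA} applicable. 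The inclusion $K^{*} + \{t\} \subseteq B^{d}(o,R)$ rewrites as $K^{*} \subseteq B^{d}(-t,R)$, so taking polars yields a family of ellipsoids $E_{t} := B^{d}(-t,R)^{*} \subseteq K$.

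The key step is to pick the correct $t_{0} \in L$ so that $E_{t_{0}}$ osculates $\partial K$ at the given $p$ with the prescribed outer normal $u$. Since support sets are additive under Minkowski sums and $B^{d}(o,R)$ has the singleton support set $\{Rv\}$ in every direction $v \in S^{d-1}$, both summands $K^{*}$ and $L$ are smooth; let $y(v) \in \partial K^{*}$ and $\ell(v) \in \partial L$ be their unique boundary points with outer unit normal $v$, so that $y(v) + \ell(v) = Rv$. Setting $v := p/\|p\|$, polar duality shows that $u/h_{K}(u) \in \partial K^{*}$ has outer normal $v$, hence by uniqueness $y(v) = u/h_{K}(u)$; the natural choice is then $t_{0} := \ell(v) = Rv - u/h_{K}(u)$. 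A short calculation then verifies that $p \in \partial E_{t_{0}}$ with outer unit normal exactly $u$ (the dual statement of the fact that $K^{*}$ and $B^{d}(-t_{0},R)$ touch at $y(v) = -t_{0}+Rv$ with common outer normal $v$).

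Finally, Lemma~\ref{LemmaB} bounds the principal radii of curvature of $E_{t_{0}}$ from below by $R^{-1}$, so a standard Blaschke-type rolling argument applied to the smooth ellipsoid $E_{t_{0}}$ yields $B^{d}(p - R^{-1}u, R^{-1}) \subseteq E_{t_{0}} \subseteq K$, and this ball passes through $p$. Since $p \in \partial K$ was arbitrary, $B^{d}(o, R^{-1})$ rolls freely in $K$. The main obstacle is the middle step: identifying the correct summand point $t_{0} \in L$ so that $E_{t_{0}}$ touches $\partial K$ at $p$ with the prescribed normal. This hinges on the smoothness of both summands $K^{*}$ and $L$ inherited from their sum $B^{d}(o,R)$, together with the precise polar correspondence between touching points of $K^{*}$ and $B^{d}(-t_{0},R)$ on the one hand and touching points of $E_{t_{0}}$ and $K$ on the other.
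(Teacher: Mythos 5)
Your argument is correct and is essentially the paper's own proof, merely parametrized by boundary points $p\in\partial K$ rather than by normal directions $u\in S^{d-1}$ (equivalent via the radial map), and working directly with a complementary summand $L$ rather than invoking the sliding-freely characterization from Schneider's Theorem~3.2.2; the ellipsoid $E_{t_0}=B^d(-t_0,R)^*$ with $-t_0=y(v)-Rv$ is precisely the paper's $B^d(\tau_{K^*}(v)-Rv,R)^*$. One small terminological slip: from the singleton support sets of $B^d(o,R)$ you obtain that $K^*$ and $L$ are \emph{strictly convex} (a unique boundary point for each outer normal direction), not that they are smooth -- and strict convexity is exactly what you then use to define $y(v)$ and $\ell(v)$, so the substance is fine.
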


\begin{proof}
By Theorem 3.2.2 in \cite{Sch2014}, the assumption implies that $K^*$ slides freely inside $B^d(o,R)$. 
Hence for each $u\in S^{d-1}$ there is a (uniquely determined) point  $x^{*}=\tau_{K^{*}}(u)\in \partial K^{*}$ with $K^{*}\subset B^d(x^*-Ru,R)$, where $\tau_{K^{*}}$ denotes the reverse spherical image of $K^*$.
Thus we get $B^d(x^*-Ru,R)^*\subset K$.  
In particular, the support set of $K^*$ at $u$ is equal to $ \{x^*\}$ and $h_{K^*}(u)=h_{B^d(x^*-Ru,R)}(u)$.  
Therefore, $h_{K^*}(u)^{-1}u\in \partial K$ and also $h_{K^*}(u)^{-1}u\in \partial (B^d(x^*-Ru,R)^*)$.  
This shows that \[h_{K^*}(u)^{-1}u\in \partial K \cap \partial(B^d(x^*-Ru,R)^*).\] 
In other words, for each $x\in \partial K$ there is some $u\in S^{d-1}$ such that 
\[x\in B^d(\tau_{K^{*}}(u)-Ru,R)^{*}\subset K.\] 
From Lemma \ref{LemmaA} we know that $B^d(\tau_{K^{*}}(u)-Ru,R)^{*}$ is an ellipsoid of revolution.  
Since $o\in {\rm int}\, K^{*}$ and $K^{*}\subset B^d(\tau_{K^{*}}(u)-Ru,R)$, we obtain $|\tau_{K^{*}}(u)-Ru|<R$. 
Now Lemma \ref{LemmaA}, Lemma \ref{LemmaB} and a special case of Corollary 3.2.13
 in \cite{Sch2014} imply that $B^d(o,R^{-1})$ rolls freely inside any of the
ellipsoids $B^d(\tau_{K^{*}}(u)-Ru,R)^{*}$.  
But then $B^d(o,R^{-1})$ rolls freely inside $K$.
\end{proof}

\section{Acknowledgements}
This research of the first author was supported by the J\'anos Bolyai Research Scholarship of the Hungarian Academy of Sciences.

Supported by the European Union and co-funded by the European Social Fund
under the project ``Telemedicine-focused research activities on the field of Mathematics,
Informatics and Medical sciences'' of project number ``T\'AMOP-4.2.2.A-11/1/KONV-2012-0073''.

The second and the third author were partially supported by the German research foundation (DFG) under the grant HU1874/4-2.

\begin{bibdiv}
\begin{biblist}

\bib{Ba2008}{article}{
   author={B{\'a}r{\'a}ny, Imre},
   title={Random points and lattice points in convex bodies},
   journal={Bull. Amer. Math. Soc. (N.S.)},
   volume={45},
   date={2008},
   number={3},
   pages={339--365},
}

\bib{BoFoHu2010}{article}{
   author={B{\"o}r{\"o}czky, K{\'a}roly~J.},
   author={Fodor, Ferenc},
   author={Hug, Daniel},
   title={The mean width of random polytopes circumscribed around a convex
   body},
   journal={J. Lond. Math. Soc. (2)},
   volume={81},
   date={2010},
   number={2},
   pages={499--523},
}
 
\bib{BoFoReVi2009}{article}{
   author={B{\"o}r{\"o}czky, K{\'a}roly~J.},
   author={Fodor, Ferenc},
   author={Reitzner, Matthias},
   author={V{\'{\i}}gh, Viktor},
   title={Mean width of random polytopes in a reasonably smooth convex body},
   journal={J. Multivariate Anal.},
   volume={100},
   date={2009},
   number={10},
   pages={2287--2295},
}

\bib{BoSch2010}{article}{
   author={B{\"o}r{\"o}czky, K{\'a}roly J.},
   author={Schneider, Rolf},
   title={The mean width of circumscribed random polytopes},
   journal={Canad. Math. Bull.},
   volume={53},
   date={2010},
   number={4},
   pages={614--628},
   issn={0008-4395},
}

\bib{CaSchYu2013}{article}{
   author={Calka, Pierre},
   author={Schreiber, Tomasz},
   author={Yukich, J.~E.},
   title={Brownian limits, local limits and variance asymptotics for convex
   hulls in the ball},
   journal={Ann. Probab.},
   volume={41},
   date={2013},
   number={1},
   pages={50--108},
}

\bib{CaYu2014}{article}{
   author={Calka, Pierre},
   author={Yukich, J.~E.},
   title={Variance asymptotics for random polytopes in smooth convex bodies},
   journal={Probab. Theory Related Fields},
   volume={158},
   date={2014},
   number={1-2},
   pages={435--463},
}

\bib{Federer1969}{book}{
  title = {Geometric measure theory},
  publisher = {Springer},
  date = {1969},
  author = {Federer, Herbert},
  place = {Berlin}
}


\bib{Gr2007}{book}{
   author={Gruber, Peter~M.},
   title={Convex and discrete geometry},
   series={Grundlehren der Mathematischen Wissenschaften [Fundamental
   Principles of Mathematical Sciences]},
   volume={336},
   publisher={Springer},
   place={Berlin},
   date={2007},
   pages={xiv+578},
}


\bib{Hug1996}{article}{
  author = {Hug, Daniel},
  title = {Contributions to Affine Surface Area},
  journal = {Manuscripta Mathematica},
  date = {1996},
  volume = {91},
  pages = {283--301},
  number = {1},
  publisher = {Springer},
}

\bib{Hug1996b}{article}{
   author={Hug, Daniel},
   title={Curvature relations and affine surface area for a general convex
   body and its polar},
   journal={Results Math.},
   volume={29},
   date={1996},
   number={3-4},
   pages={233--248},
}

\bib{Hug1998}{article}{
   author={Hug, Daniel},
   title={Absolute continuity for curvature measures of convex sets I},
   journal={Math. Nachr.},
   volume={195},
   date={1998},
   pages={139--158},
}

\bib{Hug2000}{thesis}{
  author={Hug, Daniel},
  title={Measures, curvatures and currents in convex geometry}, 
  type={Habilitationsschrift, Univ. Freiburg}, 
  date={2000},
}

\bib{Hu2013}{article}{
   author={Hug, Daniel},
   title={Random polytopes},
   conference={
      title={Stochastic geometry, spatial statistics and random fields},
   },
   book={
      series={Lecture Notes in Math.},
      volume={2068},
      publisher={Springer, Heidelberg},
   },
   date={2013},
   pages={205--238},
}

\bib{HugSchn2013}{article}{
  author = {Hug, Daniel},
  author = {Schneider, Rolf},
  title = {H\"older continuity of normal cycles and of support measures of convex
	bodies},
  year = {submitted},
  pages = {arXiv:1310.1514v1}
}




\bib{Reitzner2003}{article}{
   author={Reitzner, Matthias},
   title={Random polytopes and the Efron-Stein jackknife inequality},
   journal={Ann. Probab.},
   volume={31},
   date={2003},
   number={4},
   pages={2136--2166},
}


\bib{Sch2014}{book}{
  title = {Convex bodies: the Brunn-Minkowski theory},
  publisher = {Cambridge University Press},
  date = {2014},
  author = {Schneider, Rolf},
  series = {Encyclopedia of Mathematics and its Applications 151},
  place = {Cambridge},
  edition = {2. exp. ed.},
  isbn = {978-1-107-60101-7},
  size = {XXII, 736 S.},
}

\bib{Spivak1979}{book}{
  author = {Spivak, Michael},
  title = {A comprehensive introduction to differential geometry},
  volume = {3},
  place = {Berkeley},
  publisher = {Publish or Perish},
  date = {1979},
  edition = {2. ed.},
  size = {XI, 466 S. : graph. Darst.}
}



\bib{WeWie1993}{article}{
   author={Weil, Wolfgang},
   author={Wieacker, John A.},
   title={Stochastic geometry},
   conference={
      title={Handbook of convex geometry, Vol.\ A, B},
   },
   book={
      publisher={North-Holland, Amsterdam},
   },
   date={1993},
   pages={1391--1438},
}

\end{biblist}
 
\end{bibdiv}

\end{document}